\Crefname{algocfline}{Algorithm}{Algorithms}
\Crefname{algocf}{Algorithm}{Algorithms}
\pgfplotsset{compat=1.14}
\pgfplotsset{every x tick label/.append style={font=\small, yshift=0.0ex}}
\pgfplotsset{every y tick label/.append style={font=\small, yshift=0.0ex}}
\newdimen\LineSpace
\tikzset{
    line space/.code={\LineSpace=#1},
    line space=10pt
}
\tikzset{
  schraffiert/.style={pattern=horizontal lines,pattern color=#1},
  schraffiert/.default=black
}
\tikzstyle{densely dashed}=          [dash pattern=on 6pt off 2pt]
\theoremstyle{definition}
\newtheorem{Sat}{Satz}
\numberwithin{Sat}{section}
\newtheorem{theorem}[Sat]{Theorem}
\newtheorem{lemma}[Sat]{Lemma}
\newtheorem{proposition}[Sat]{Proposition}
\def\myoverset#1#2#3{\stackrel{\text{\makebox[#3pt]{#1}}}{#2}}
\newcommand{\N}{\mathbb{N}}
\newcommand{\TSP}{\textsc{TSP}\xspace}
\newcommand{\dar}{\textsc{Dial-a-Ride}\xspace}
\newcommand{\smartstart}{\textsc{Smartstart}\xspace}
\newcommand{\ignore}{\textsc{Ignore}\xspace}
\newcommand{\opt}{\textsc{Opt}\xspace}
\newcommand{\smartstartS}{\textsc{Smartstart}(\sigma)}
\newcommand{\ignoreS}{\textsc{Ignore}(\sigma)}
\newcommand{\optS}{\textsc{Opt}(\sigma)}
\newcommand{\timeReq}{r} 
\newcommand{\timeSch}{t} 
\newcommand{\posReq}{a} 
\newcommand{\posSch}{p} 
\DeclareMathOperator*{\argmin}{argmin}
\begin{document}

\title{Tight Analysis of the Smartstart Algorithm for Online Dial-a-Ride on the Line\footnote{This work was supported by the `Excellence Initiative' of the German Federal and State Governments and the Graduate School~CE at TU~Darmstadt.}}

\author{Alexander Birx \qquad\qquad Yann Disser\\ \footnotesize{Institute of Mathematics and Graduate School CE, TU Darmstadt, Germany}}

\maketitle

\begin{abstract}
The online \dar problem is a fundamental online problem in a metric space, where transportation requests appear over time and may be served in any order by a single server with unit speed. 
Restricted to the real line, online \dar captures natural problems like controlling a personal elevator. 
Tight results in terms of competitive ratios are known for the general setting and for online \TSP on the line (where source and target of each request coincide). 
In contrast, online \dar on the line has resisted tight analysis so far, even though it is a very natural online problem.

We conduct a tight competitive analysis of the \smartstart algorithm that gave the best known results for the general, metric case.
In particular, our analysis yields a new upper bound of 2.94 for open, non-preemptive online \dar on the line, which improves the previous bound of~3.41 [Krumke'00].
The best known lower bound remains 2.04 [SODA'17].
We also show that the known upper bound of 2 [STACS'00] regarding \smartstart's competitive ratio for closed, non-preemptive online \dar is tight on the line. 
\end{abstract}
\newpage

\section{Introduction}\label{section: Introduction}

Online optimization deals with settings where algorithmic decisions have to be made over time without knowledge of the future.
A typical introductory example is the problem of controlling an elevator/conveyor system, where requests to transport passengers/goods arrive over time and the elevator needs to decide online how to adapt its trajectory along the real line.
In terms of competitive analysis, the central question in this context is how much longer our trajectory will be in the worst-case, relative to an optimum offline solution that knows all requests ahead of time, i.e., we ask for solutions with good \emph{competitive ratio}.

While the elevator problem is a natural online problem, even simplified versions of it have long resisted tight analysis. 
\emph{Online \TSP on the line} is such a simplification, where a single server on the real line needs to serve requests that appear over time at arbitrary positions by visiting their location, i.e., requests do not need to be transported.
We distinguish the \emph{closed} and \emph{open} variants of this problem, depending on whether the server needs to eventually return to the origin or not.
Determining the exact competitive ratios for either variant had been an open problem for more than two decades~\cite{Ausiello1, BlomKrumkePaepeStougie/01, JailletWagner/08, Krumke1, Krumke3, Lipmann/03}, when Bjelde et al.~\cite{Disser1} were finally able to conduct a tight analysis that established competitive ratios of roughly~1.64 for the closed case and~2.04 for the open case.

The next step towards formally capturing the intuitive elevator problem is to allow transportation requests that appear over time; and to fix a capacity $c \in \mathbb{N} \cup \{\infty\}$ of the server that limits the number of transportation requests that can be served simultaneously.
The resulting \emph{online \dar problem on the line} has received considerable attention in the past~\cite{Ascheuer1, Disser1, Feuerstein1, Krumke1, Krumke2, Lipmann/03}, but still resists tight analysis.
The best known (non-preemptive) bounds put the competitive ratio in the range $[1.75,2]$ for the closed variant (see~\cite{Disser1, Ascheuer1}). For the open variant the best known (non-preemptive) bounds put the competitiv ratio in the range $[2.04,3]$ for $c=1$ and in the range $[2.04,3.41]$ for $c>1$ (see~\cite{Disser1, Krumke1}).
In this paper, we show an improved upper bound of (roughly) $2.94$ for open online \dar on the line for arbitrary capacity $c\in\N\cup\{\infty\}$.

A straight-forward algorithm for online \dar on the line is the algorithm $\ignore$~\cite{Ascheuer1}:
Whenever the server is idle and unserved requests $R_t$ are present at the current time~$t$, compute an optimum schedule to serve these requests from the current location, and follow this schedule while \emph{ignoring} newly incoming requests.
$\ignore$ has a competitive ratio of exactly~$4$ (see \Cref{appendix: Algorithm Ignore}).
This competitive ratio can be improved by potentially waiting before starting the optimum schedule, in order to protect against requests that come in right after we decide to start.
Ascheuer et al.~\cite{Ascheuer1} proposed the algorithm $\smartstart$ (see \Cref{algorithm: Smartstart}) that delays starting the optimum schedule until a certain time $t$ relative to the length $L(\timeSch,\posSch,R_t)$ of this schedule (formal definitions below).

$\smartstart$ is parameterized by a factor $\Theta > 1$ that scales this waiting period.
In this paper, we conduct a tight analysis of the best competitive ratio of \smartstart for open/closed online \dar on the line, over all parameter values $\Theta > 1$.

\subparagraph*{Results and techniques.}

The \smartstart algorithm is of particular importance for online \dar, since, on arbirary metric spaces, it achieves the best possible competitive ratio of $2$ for the closed variant~\cite{Ascheuer1, Ausiello1}, and the best known competitive ratio of~$2+\sqrt{2} \approx 3.41$ for the open variant~\cite{Krumke1}.
We provide a conclusive treatment of this algorithm for online \dar on the line in terms of competitive analysis, both for the open and the closed variant.

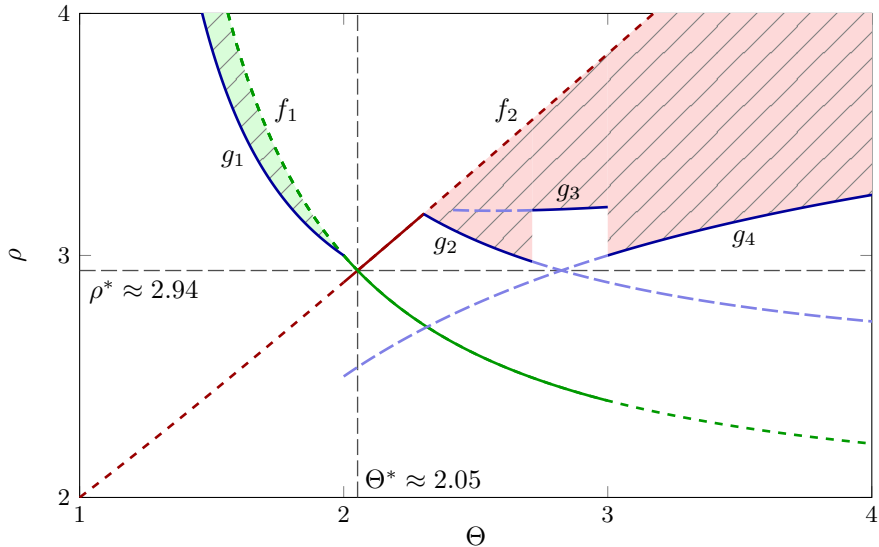
\begin{figure}[h]
\centering\begin{tikzpicture}
\begin{axis}[
samples=100,
xmin=1,
xmax=4,
xtick={1,2,...,4},
ytick={2,3,4},
ymin=2,
ymax=4,
xlabel=$\Theta$,
ylabel=$\rho$,
height=8cm,
width=12cm,
y label style={yshift=0.5em},
x label style={yshift=0.5em},
legend pos=north west,
 xticklabel style={/pgf/number format/.cd,fixed}
]

\addplot[no markers, domain=2.3027756:2.7139][name path=A1, black!6!red!15,line width=0pt]{x+1-(x-1)/(3*x+3)};
\addplot[no markers, domain=2.7139:3][name path=A2, black!6!red!15,line width=0pt]{x+1-(x-1)/(3*x+3)};
\addplot[no markers, domain=3:6][name path=A3, black!6!red!15,line width=0pt]{x+1-(x-1)/(3*x+3)};

\draw[densely dashed,line width=0.5pt,black!70] (axis cs:2.0526,\pgfkeysvalueof{/pgfplots/ymin}) -- (axis cs:2.0526,\pgfkeysvalueof{/pgfplots/ymax});
\addplot[no markers,densely dashed, domain=1:6,line width=0.5pt, black!70]{2.93768};
\addplot[no markers, thick, domain=1.2:2.0526][name path=Z1, black!40!green,line width=1pt]{5};
\addplot[no markers, thick, domain=1.2:2.0526][name path=X, dashed, black!40!green,line width=1pt]{(2*x^2+2*x)/(x^2+x-2)};
\addplot[no markers, thick, domain=2:2.0526][black!40!green,line width=1pt]{(2*x^2+2*x)/(x^2+x-2)};
\addplot[no markers, thick, domain=2.0526:3][black!40!green,line width=1pt]{(2*x^2+2*x)/(x^2+x-2)};
\addplot[no markers, thick, domain=2.0526:6][name path=Z2, black!40!green,line width=1pt]{5};
\addplot[no markers, thick, domain=2.0526:6][name path=Y,black!40!red,dashed,line width=1pt]{x+1-(x-1)/(3*x+3)};
\addplot[no markers, thick, domain=2:2.0526][black!40!red,line width=1pt]{x+1-(x-1)/(3*x+3)};
\addplot[no markers, thick, domain=2.0526:2.3027756][black!40!red,line width=1pt]{x+1-(x-1)/(3*x+3)};

\addplot[no markers, thick, domain=1.2:6][black!40!green,dashed,line width=1pt]{(2*x^2+2*x)/(x^2+x-2)};
\addplot[no markers, thick, domain=1.2:2][name path=C3, dashed,black!40!green,line width=1pt]{(2*x^2+2*x)/(x^2+x-2)};

\addplot[no markers, thick, domain=1:2][black!40!red,dashed,line width=1pt]{x+1-(x-1)/(3*x+3)};

\addplot[no markers, thick, domain=1.2:2][name path=D3, black!40!blue,line width=1pt]{(3*x^2)/(x^2+x-2)};

\addplot[no markers, thick, domain=2.3027756:2.7139][name path=B1, black!40!blue,line width=1pt]{((3*x+3)/(x-1)+2+2/x)/((2*x+3)/x)};
\addplot[no markers, densely dashed, domain=2.7139:4.64575][black!20!blue!50,line width=1pt]{((3*x+3)/(x-1)+2+2/x)/((2*x+3)/x)};

\addplot[no markers, thick, domain=2.7139:3][name path=B2, black!40!blue,line width=1pt]{(((3*x-1)/(x-1))+1+(1/x))/(1+(2/x))};

\addplot[black!6!red!15, postaction={pattern=my north east lines,pattern color=gray}] fill between[of=A1 and B1];
\addplot[black!6!red!15, postaction={pattern=my north east lines,pattern color=gray}] fill between[of=A2 and B2];
\addplot[no markers, densely dashed, domain=2.41421356237:2.7139][black!20!blue!50,line width=1pt]{((3*x-1)/(x-1)+1+1/x)/(1+2/x)};

\addplot[no markers, thick, domain=3:6][name path=B3, black!40!blue,line width=1pt]{4-3/x};
\addplot[no markers, densely dashed, domain=2:3][black!20!blue!50,line width=1pt]{4-3/x};

\addplot[black!6!red!15, postaction={pattern=my north east lines,pattern color=gray}] fill between[of=A3 and B3];
\addplot[black!6!green!15, postaction={pattern=my north east lines,pattern color=gray}] fill between[of=C3 and D3];

\node[right] at (axis cs: 1.7,3.6) {$f_1$};
\node[left] at (axis cs: 2.7,3.6) {$f_2$};
\node[right] at (axis cs: 1.5,3.4) {$g_1$};
\node[left] at (axis cs: 2.47,3.04) {$g_2$};
\node[right] at (axis cs: 2.77,3.26) {$g_3$};
\node[left] at (axis cs: 3.6,3.07) {$g_4$};
\node[above] at (axis cs: 2.3,2) {$\Theta^*\approx 2.05$};
\node[below right] at (axis cs: 1,2.93768) {$\rho^*\approx 2.94$};
\end{axis}
\end{tikzpicture}
\caption{Overview over our bounds for \smartstart. 
The functions~$f_1$ (green) / $f_2$ (red) are upper bounds for the cases where \smartstart waits / does not wait before starting the final schedule, respectively.
The upper bounds are drawn solid in the domains where they are tight for their corresponding case.
The functions $g_1$ through $g_4$ (blue) are general lower bounds; dashed continuations indicate how far these bounds could be extended.}
\label{figure: Bounds}
\end{figure}

Regarding the open case, we show that $\smartstart$ attains a competitive ratio of $\rho^* \approx 2.94$ for parameter value $\Theta^* \approx 2.05$ (\Cref{section: Upper Bound for the Open Version}).
To show this, we derive two separate upper bounds depending on $\Theta$ (cf.~\Cref{figure: Bounds}): an upper bound $f_1(\Theta)$ for the case that \smartstart has a waiting period before starting its last schedule (\Cref{proposition: Upper Bound Waiting}), and an upper bound~$f_2(\Theta)$ for the case that \smartstart begins its final schedule immediately (\Cref{proposition: Upper Bound No Waiting}).
The resulting general upper bound of $\max\{f_1(\Theta), f_2(\Theta)\}$ has its minimum precisely at the intersection point~$(\Theta^*, \rho^*)$ of~$f_1$ and~$f_2$.

On the other hand, we show that for $\Theta \in (2,3)$ there are instances where \smartstart waits before starting its final schedule and has competitive ratio at least $f_1(\Theta)$ (\Cref{proposition: Lower Bound Waiting}).
Similarly, we show that for $\Theta \in [2,2.303]$ there are instances where \smartstart does not wait before starting its final schedule and has competitive ratio at least $f_2(\Theta)$ (\Cref{proposition: Lower Bound No Waiting}).
Together, this implies that the general upper bound of $\max\{f_1(\Theta), f_2(\Theta)\}$ is tight for~$\Theta \in (2,2.303]$, and thus for~$\Theta = \Theta^*$ (cf.~\Cref{figure: Bounds}).

To complete our analysis of \smartstart, we give lower bound constructions for different domains of~$\Theta$ ($g_1$ through $g_4$ in~\Cref{figure: Bounds}) that establish that $\Theta^*$ is indeed the best parameter choice for \smartstart in the worst-case (\Cref{lemma: Remaining Lower Bounds}).
The key ingredient to all our lower bounds is a way to \emph{lure} \smartstart away from the origin (\Cref{lemma: Luring}).

Finally, for the closed variant of the problem, we provide a lower bound of~$2$ on the best-possible competitive ratio of \smartstart over all possible choices of the parameter~$\Theta>1$ (\Cref{section: Lower Bound for the Closed Version}).
This tightly matches the known upper bound for general metric spaces~\cite{Ascheuer1}.

\subparagraph*{Significance.}

The main contribution of this paper is a conclusive treatment of the algorithm \smartstart for online \dar on the line in terms of competitive analysis.
Additionally, our analysis yields an improved upper bound of (roughly)~$2.94$ for non-preemptive, open online \dar on the line. 
This is the first bound below $3$ and narrows the gap for the competitive ratio to $[2.04,2.94]$.
Our work is likely to serve as a starting point towards devising better algorithms (preemptive or non-preemptive) that narrow the gaps for both the open and closed setting by avoiding critical ``mistakes'' of \smartstart, as evidenced by our lower bound constructions

\subparagraph*{Further related work.}

In this paper, we focus on the non-preemptive variant of online \dar on the line, where requests cannot be unloaded on the way in reaction to the arrival of new requests.
For the case where preemption is allowed, the best known bounds for the closed version are $[1.64,2]$ (see~\cite{Ausiello1, Ascheuer1}), which is slightly worse than the gap of $[1.75,2]$ in the non-preemptive case. 
On the other hand, the best bounds for the open, preemptive variant are $[2.04,2.41]$ (see~\cite{Disser1}), which is better than the gap of $[2.04,2.94]$ in the non-preemptive case.
In particular, the preemptive and non-preemptive cases can currently not be separated in terms of competitive ratios.

A variant of the online \dar problem where the objective is to minimize the maximal flow time, instead of the makespan, has been studied by Krumke et al.~\cite{Krumke2, Krumke3}.
They established that in many metric spaces no online algorithm can be competitive with respect to this objective. 
Hauptmeier et al.~\cite{Hauptmeier1} showed that a competitive algorithm is possible if we restrict ourselves to instances with ``reasonable'' load, which roughly means that requests that appear over a sufficiently large time period~$T$ can always be served in time at most~$T$.

Lipmann et al.~\cite{Lipmann1} studied a natural variant of closed, online \dar where the destinations of requests are only revealed upon collection by the server.
For general metric spaces and server capacity~$c$, they showed a tight competitive ratio of~$3$ in the preemptive setting, and lower/upper bounds of~$\max\{3.12,c\}$ and $2c+2$, respectively, in the non-preemptive setting.

Yi and Tian \cite{Yi2} considered the online \dar problem with deadlines, with the objective of serving the maximum number of requests.
They provided bounds on the competitive ratio depending on the diameter of the metric space.
In~\cite{Yi1} they further studied this setting when the destination of requests are only revealed upon collection by the server.

The offline version of \dar on the line has been studied in various settings, for an overview see~\cite{Paepe1}.
For the closed, non-preemptive case without release times, Gilmore and Gomory~\cite{GilmoreGomory/64} and Atallah and Kosaraju~\cite{Atallah1} gave a polynomial time algorithm for a server with unit capacity $c=1$, and Guan~\cite{Guan1} showed that the problem is hard for~$c=2$.
Bjelde et al.~\cite{Disser1} extended this result to any finite $c \geq 2$ and both the open and closed case.
They further showed that with release times the problem is already hard for finite~$c \geq 1$.
On the other hand, the complexity of the case $c=\infty$ has not yet been established.
The closed, preemptive case without release times was shown to be polynomial time solvable for $c=1$ by Atallah and Kosaraju \cite{Atallah1}, and for $c \geq 2$ by Guan~\cite{Guan1}.

For the closed, non-preemptive case with finite capacity, Krumke~\cite{Krumke1} provided a $3$-approxi\-mation algorithm. Finally, Charikar and Raghavachari~\cite{Charikar1} gave approximation algorithms for the closed case without release times, both preemptive and non-preemptive, on general metric spaces.
They also claimed to have a 2-approximation for the line, but this result appears to be incorrect (personal communication). 

\section{Preliminaries}\label{section: Preliminaries}

Formally, an instance of \dar on the line is given by a set of requests denoted by $\sigma = \{(a_1,b_1;r_1),(a_2,b_2;r_2),\dots,(a_n,b_n;r_n)\}$ that need to be served by a single server with capacity $c \in \mathbb{N}\cup\{\infty\}$, travelling with unit speed and starting at the origin on the real line.
Request~$\sigma_i$ appears at time~$r_i > 0$ at position~$a_i \in \mathbb{R}$ of the real line and needs to be transported to position~$b_i \in \mathbb{R}$.
The objective of the \dar problem on the line is to find a shortest schedule for the server to transport all requests without carrying more than~$c$ requests at once, where the length of a schedule is the length of the resulting trajectory.
In the \emph{closed} version of the problem, the server eventually needs to return to the origin, in the \emph{open} version it does not.
In the \emph{online} \dar problem on the line, each request~$\sigma_i$ is revealed only at time~$r_i$, and $n$ is only revealed implicitly by the fact that no more requests appear.
In contrast, in the \emph{offline} problem, all requests are known ahead of time (but release times still need to be respected).

We define~$L(\timeSch,\posSch,R)$ to be the length of a shortest schedule that starts at position~$p$ at time~$t$ and serves all requests in $R \subseteq \sigma$ after they appeared (i.e., the schedule must respect release times).
Observe that, for all $0 \leq t \leq t'$, $p, p' \in \mathbb{R}$, and $R \subseteq \sigma$, we have
\begin{align}
  L(\timeSch,\posSch,R) &\ge  L(\timeSch',\posSch,R),           \label{equation: Schedule Time}\\
  L(\timeSch,\posSch,R) &\le  |\posSch - \posSch'| + L(\timeSch,\posSch',R).  \label{equation: Schedule Triangule Eq}
\end{align}
By~$x_- := \min\{0,\min_{i=1,\dots,n} \{a_i\},\min_{i=1,\dots,n} \{b_i\}\}$ we denote the leftmost and by $x_+:=\max\{0,\max_{i=1,\dots,n} \{a_i\},\max_{i=1,\dots,n} \{b_i\}\}$ the rightmost position that needs to be visited by the server.
Here and throughout, we orient the real line from left to right.
Obviously, there is an optimum trajectory that only visits points in~$[x_-, x_+]$, and we let \opt be such a trajectory and~$\opt(\sigma) := L(0,0,\sigma)$ be its length.

For the description of online algorithms, we denote by $t$ the current time and by $R_t$ the set of requests that have appeared until time~$t$ but have not been served yet.
The algorithm \smartstart is given in \Cref{algorithm: Smartstart}. 
Essentially, \smartstart waits before starting an optimal schedule to serve all available requests at time
\begin{equation}
  \min_{\timeSch' \ge \timeSch}\left\{\timeSch' \ge\frac{L(\timeSch',\posSch,R_{t'})}{\Theta-1}\right\},\label{equation: Smartstart Definition}
\end{equation}
where $\posSch$ is the current position of the server and $\Theta > 1$ is a parameter of the algorithm that scales the waiting time. Importantly, \smartstart ignores incoming requests while executing a schedule. 
Whenever we need to distinguish the behavior of \smartstart for different values of~$\Theta > 1$, we write $\smartstart_{\Theta}$ to make the choice of~$\Theta$ explicit.
The length of \smartstart's trajectory is denoted by $\smartstart(\sigma)$.
Note that the schedules used by \smartstart are NP-hard to compute for $1 < c < \infty$, see~\cite{Disser1}. 

\begin{algorithm}[H]
\SetKwBlock{Repeat}{repeat}{}
\DontPrintSemicolon
$\posSch_1\leftarrow 0$\;

\For{$j = 1,2,\dots$}{
\While{$t\le L(\timeSch,\posSch_j,R_t)/(\Theta-1)$}{wait\;}
$\timeSch_j\leftarrow t$\;
$S_j\leftarrow$ optimal offline schedule serving $R_t$ starting from $\posSch_j$\;
execute $S_j$\;
$\posSch_{j+1}\leftarrow $ current position}

 \caption{$\smartstart$}\label{algorithm: Smartstart}
\end{algorithm}

We let $N\in \mathbb{N}$ be the number of schedules needed by \smartstart to serve~$\sigma$.
The $j$-th schedule is denoted by $S_j$, its starting time by~$\timeSch_j$, its starting point by~$\posSch_j$, its ending point by~$\posSch_{j+1}$ (cf.~\Cref{algorithm: Smartstart}), and the set of requests served in~$S_j$ by~$\sigma_{S_j}$.
For convenience, we set $\timeSch_0 = \posSch_0 = 0$.
Finally, we denote by $y_-^{S_j}$ the leftmost and by $y_+^{S_j}$ the rightmost position that occurs in the requests $\sigma_{S_j}$. Note that $y_-^{S_j}$ and $y_+^{S_j}$ need not lie on different sides of the origin, in contrast to $x_{-/+}$.

\section{Upper Bound for the Open Version}\label{section: Upper Bound for the Open Version}

In this section, we give an upper bound on the completion time
\begin{equation}
\smartstartS=\timeSch_N+L(\timeSch_N,\posSch_N,\sigma_{S_N})\label{equation: Costs Smartstart}
\end{equation}
of \smartstart, relative to~$\opt(\sigma)$.
To do this, we consider two cases, depending on whether or not \smartstart waits after finishing schedule~$S_{N-1}$ and before starting the final schedule~$S_N$.
If \smartstart waits, the starting time of schedule~$S_N$ is given by
\begin{equation}
\timeSch_N=\frac{1}{\Theta-1}L(\timeSch_N,\posSch_N,\sigma_{S_N}),\label{equation: Starting Time No Wait}
\end{equation}
otherwise, we have
\begin{equation}\label{equation: Starting Time Wait}
\timeSch_N=\timeSch_{N-1}+L(\timeSch_{N-1},\posSch_{N-1},\sigma_{S_{N-1}}).
\end{equation}
We start by giving a lower bound on the starting time of a schedule.

\begin{lemma}\label{lemma: Lower Bound Starting Time}
Algorithm $\smartstart$ does not start schedule $S_j$ earlier than time $\frac{|\posSch_{j+1}|}{\Theta}$, i.e., we have $\timeSch_j\ge\frac{|\posSch_{j+1}|}{\Theta}$.
\end{lemma}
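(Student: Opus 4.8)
The plan is to argue that the starting time $\timeSch_j$ must be large enough to allow \smartstart's schedule $S_j$ to actually reach position $\posSch_{j+1}$, and then combine this with the waiting condition from the algorithm. First I would note that the algorithm only leaves the \textbf{while}-loop once $\timeSch_j \ge L(\timeSch_j,\posSch_j,R_{\timeSch_j})/(\Theta-1)$; since $S_j$ is an optimal schedule for $R_{\timeSch_j}$ starting at $\posSch_j$, its length is exactly $L(\timeSch_j,\posSch_j,R_{\timeSch_j}) = L(\timeSch_j,\posSch_j,\sigma_{S_j})$, so we get $\timeSch_j \ge \frac{1}{\Theta-1} L(\timeSch_j,\posSch_j,\sigma_{S_j})$. (If \smartstart does not wait before $S_j$, i.e.\ equation~\eqref{equation: Starting Time Wait} applies, I would handle that by observing $\timeSch_j \ge \frac{1}{\Theta-1}L(\timeSch_j,\posSch_j,\sigma_{S_j})$ still holds, because the loop guard is checked and immediately fails only when $t$ already satisfies this inequality.)

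Next I would bound $L(\timeSch_j,\posSch_j,\sigma_{S_j})$ from below by $|\posSch_j - \posSch_{j+1}|$: the schedule $S_j$ starts at $\posSch_j$ and ends at $\posSch_{j+1}$, so its trajectory has length at least the distance between these two points. Combining, $\timeSch_j \ge \frac{1}{\Theta-1}|\posSch_j - \posSch_{j+1}|$. Separately, since the server moves at unit speed starting from the origin at time $0$, at time $\timeSch_j$ it has arrived at $\posSch_j$, so $\timeSch_j \ge |\posSch_j|$. I expect the main (small) obstacle is to combine these two inequalities optimally: I would take a convex combination, namely $\timeSch_j = \lambda \timeSch_j + (1-\lambda)\timeSch_j \ge \lambda|\posSch_j| + \frac{1-\lambda}{\Theta-1}|\posSch_j-\posSch_{j+1}|$, and then use the triangle inequality $|\posSch_j| + |\posSch_j - \posSch_{j+1}| \ge |\posSch_{j+1}|$ to make the right-hand side a multiple of $|\posSch_{j+1}|$.

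Choosing the weight $\lambda$ so that the coefficients of $|\posSch_j|$ and $|\posSch_j-\posSch_{j+1}|$ are equal, i.e.\ $\lambda = \frac{1-\lambda}{\Theta-1}$, which gives $\lambda = \frac{1}{\Theta}$, yields $\timeSch_j \ge \frac{1}{\Theta}\bigl(|\posSch_j| + |\posSch_j - \posSch_{j+1}|\bigr) \ge \frac{|\posSch_{j+1}|}{\Theta}$, as claimed. I would double-check the edge case $j = N$ (where $R_{\timeSch_j}$ and $\sigma_{S_j}$ coincide by definition) and the convention $\timeSch_0 = \posSch_0 = 0$, but these should cause no trouble. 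The only real care needed is making sure the waiting-condition inequality is available in both the ``waits'' and ``does not wait'' cases, which is immediate from the structure of the \textbf{while}-loop in \Cref{algorithm: Smartstart}.
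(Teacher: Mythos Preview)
Your proposal is correct and follows essentially the same approach as the paper: both establish $\timeSch_j \ge |\posSch_j|$ and $\timeSch_j \ge \frac{|\posSch_j-\posSch_{j+1}|}{\Theta-1}$ from the while-loop condition and the trivial lower bound on the schedule length, and then combine them. The only cosmetic difference is in the combination step---the paper takes the max of the two bounds and does a short case split on whether $|\posSch_j|\ge\frac{|\posSch_{j+1}|}{\Theta}$, whereas your convex-combination with $\lambda=\tfrac{1}{\Theta}$ plus the triangle inequality $|\posSch_j|+|\posSch_j-\posSch_{j+1}|\ge|\posSch_{j+1}|$ reaches the same conclusion in one line.
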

\begin{proof}
Since $\smartstart$ at least has to move from $\posSch_j$ to $\posSch_{j+1}$, we have
\begin{equation*}
L(\timeSch_j,\posSch_j,\sigma_{S_j})\ge |\posSch_j-\posSch_{j+1}|.
\end{equation*}
Note however that $\smartstart$ needs at least time $|\posSch_j|$ to reach $\posSch_j$. Therefore, we have
\begin{align}
\timeSch_j&\ge\min\{t\ge |\posSch_j|: t+|\posSch_j-\posSch_{j+1}|\le \Theta t\}\nonumber\\
&=\min\left\{t\ge |\posSch_j|: \frac{|\posSch_j-\posSch_{j+1}|}{\Theta-1}\le t\right\}\nonumber\\
&=\max\left\{|\posSch_j|,\frac{|\posSch_j-\posSch_{j+1}|}{\Theta-1}\right\}.\label{equation: Starting Time Trivial Approx}
\end{align}
It remains to show
\begin{equation*}
\max\left\{|\posSch_j|,\frac{|\posSch_j-\posSch_{j+1}|}{\Theta-1}\right\}\ge\frac{|\posSch_{j+1}|}{\Theta}.
\end{equation*}
For $|\posSch_j|\ge \frac{|\posSch_{j+1}|}{\Theta}$ we trivially have
\begin{equation}
\max\left\{|\posSch_j|,\frac{|\posSch_j-\posSch_{j+1}|}{\Theta-1}\right\}\ge |\posSch_j|\ge\frac{|\posSch_{j+1}|}{\Theta}.\label{equation: Starting Time First Approx}
\end{equation}
For $|\posSch_j|< \frac{|\posSch_{j+1}|}{\Theta}$ we have
\begin{equation}
|\posSch_j-\posSch_{j+1}|\overset{|\posSch_j|< \frac{|\posSch_{j+1}|}{\Theta}< |\posSch_{j+1}|}{>}\left|\frac{\posSch_{j+1}}{\Theta}-\posSch_{j+1}\right|.\label{equation: Starting Time Second Approx}
\end{equation}
This leads to
\begin{align}
\max\left\{|\posSch_j|,\frac{|\posSch_j-\posSch_{j+1}|}{\Theta-1}\right\}&\myoverset{}{\ge}{15} \frac{|\posSch_j-\posSch_{j+1}|}{\Theta-1}\nonumber\\
&\myoverset{(\ref{equation: Starting Time Second Approx})}{>}{15} \frac{|\frac{\posSch_{j+1}}{\Theta}-\posSch_{j+1}|}{\Theta-1}\nonumber\\
&\myoverset{}{=}{15}\frac{|\posSch_{j+1}|}{\Theta}\label{equation: Starting Time Third Approx}
\end{align}
To sum it up, we have
\begin{equation*}
\timeSch_j\overset{(\ref{equation: Starting Time Trivial Approx})}{\ge}\max\left\{|\posSch_j|,\frac{|\posSch_j-\posSch_{j+1}|}{\Theta-1}\right\}\overset{(\ref{equation: Starting Time Third Approx}),(\ref{equation: Starting Time First Approx})}{\ge}\frac{|\posSch_{j+1}|}{\Theta},
\end{equation*}
as claimed.
\end{proof}

The following bound on the length of \smartstart's schedules is an essential ingredient in our upper bounds.

\begin{lemma}\label{lemma: Costs per Schedule}
For every schedule $S_j$ of $\smartstart$, we have
\begin{equation*}
L(\timeSch_j,\posSch_j,\sigma_{S_j})\le \left(1+\frac{\Theta}{\Theta+2}\right)\optS.  
\end{equation*}
\end{lemma}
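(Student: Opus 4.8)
The plan is to estimate the length $\ell_j := L(\timeSch_j,\posSch_j,\sigma_{S_j})$ of the $j$-th schedule (cf.~\eqref{equation: Costs Smartstart}) against $\optS$. I would first dispose of $j=1$: there $\posSch_1=0$, and since $\sigma_{S_1}\subseteq\sigma$ and every request of $\sigma_{S_1}$ has appeared by time $\timeSch_1$, monotonicity~\eqref{equation: Schedule Time} gives $\ell_1 = L(\timeSch_1,0,\sigma_{S_1}) \le L(0,0,\sigma_{S_1}) \le L(0,0,\sigma) = \optS$, which is stronger than claimed (the last step because a schedule serving $\sigma$ also serves the subset $\sigma_{S_1}$ without exceeding the capacity). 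So from now on $j\ge 2$, and by reflecting the real line I may assume $\posSch_j\ge 0$. The main tool is the \emph{basic estimate} $\ell_j \le |\posSch_j| + L(\timeSch_j,0,\sigma_{S_j}) \le |\posSch_j| + L(0,0,\sigma_{S_j}) \le |\posSch_j| + \optS$, where the first step is the triangle-type inequality~\eqref{equation: Schedule Triangule Eq} (walk from $\posSch_j$ to the origin, then run there an optimal schedule for $\sigma_{S_j}$, whose release times are already met at time $\timeSch_j$), the second is monotonicity~\eqref{equation: Schedule Time}, and the third is the subset/capacity argument again.

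I would then split on $|\posSch_j| = \posSch_j$. If $\posSch_j \le \frac{\Theta}{\Theta+2}\optS$, the basic estimate immediately gives $\ell_j \le \bigl(1+\frac{\Theta}{\Theta+2}\bigr)\optS$, and we are done. The remaining case $\posSch_j > \frac{\Theta}{\Theta+2}\optS$ --- equivalently $\optS < \frac{\Theta+2}{\Theta}\posSch_j$, i.e.\ $S_{j-1}$ ends far from the origin --- is the crux. Here the origin detour in the basic estimate is wasteful, and I would instead serve $\sigma_{S_j}$ from $\posSch_j$ by moving only to the endpoint of $[y_-^{S_j},y_+^{S_j}]$ closest to $\posSch_j$ and then completing an optimal schedule for $\sigma_{S_j}$ from that endpoint. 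To turn this into the claimed bound I would combine three ingredients: \Cref{lemma: Lower Bound Starting Time} applied to $S_{j-1}$ (note $\posSch_j=\posSch_{(j-1)+1}$), giving $\timeSch_{j-1}\ge\posSch_j/\Theta$; the waiting condition $\ell_{j-1}\le(\Theta-1)\timeSch_{j-1}$ together with $|\posSch_{j-1}|\le\timeSch_{j-1}$, which relates $\posSch_j$, $\posSch_{j-1}$ and the end time of $S_{j-1}$; and the fact that the offline optimum \opt traverses the whole interval spanned by $0$, $\posSch_j$ and $[y_-^{S_j},y_+^{S_j}]$ (the point $\posSch_j$ lies in the extent of the optimal schedule $S_{j-1}$ and is hence visited by \opt, while $y_-^{S_j},y_+^{S_j}$ are positions occurring in $\sigma_{S_j}\subseteq\sigma$), so that $\optS$ is at least each of the corresponding spans. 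Since $\posSch_j$ is already a fixed fraction of $\optS$, this last point forces the interval $[y_-^{S_j},y_+^{S_j}]$ and the origin to lie close to $\posSch_j$; feeding this into the ``serve from $\posSch_j$'' schedule and distinguishing the handful of sub-configurations of $\posSch_j$ relative to $[y_-^{S_j},y_+^{S_j}]$ should yield $\ell_j\le\bigl(1+\frac{\Theta}{\Theta+2}\bigr)\optS$ in this case as well.

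I expect this second case to be the main obstacle: for each sub-configuration one has to pick the correct lower bound on $\optS$ --- the span of $\{0,\posSch_j,y_-^{S_j},y_+^{S_j}\}$, and possibly release-time information, since the requests of $\sigma_{S_j}$ only appeared after $\timeSch_{j-1}$ --- so that exactly the length saved by not returning to the origin offsets the (potentially large) term $\posSch_j$. The server capacity $c$ should need no separate treatment: every schedule I compare against is an $L(0,0,\cdot)$- or $\optS$-quantity, realised by a capacity-feasible schedule, and the schedules I construct are only bounded in length by such quantities, so no capacity bookkeeping on the constructions is required.
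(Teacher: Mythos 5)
Your treatment of $j=1$ and the \emph{basic estimate} $\ell_j\le|\posSch_j|+\optS$ matches the paper (its equation~(\ref{equation: Triangle Zero})), and the split at $|\posSch_j|=\frac{\Theta}{\Theta+2}\optS$ is the right threshold. The first case is fine. The gap is entirely in the second case, and it is substantive: your sketch leaves out the one idea that actually makes the argument close.

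The paper's proof of the complementary bound $\ell_j< 2\optS-\frac{2}{\Theta}|\posSch_j|$ pivots on $\posReq^\opt_j$, the position where $\opt$ \emph{first picks up a request of $\sigma_{S_j}$}. That choice is essential because from then on $\opt$ still has to serve \emph{all} of $\sigma_{S_j}$, giving the clean inequality $L(\timeReq^\opt_j,\posReq^\opt_j,\sigma_{S_j})\le\optS-\timeReq^\opt_j$, which combined with the triangle inequality through $\posReq^\opt_j$ and with $\timeReq^\opt_j>\timeSch_{j-1}\ge|\posSch_j|/\Theta$ (your ingredient from \Cref{lemma: Lower Bound Starting Time}) gives $\ell_j<\optS+|\posReq^\opt_j-\posSch_j|-\timeSch_{j-1}$; a short case split on whether $\opt$ reaches $\posSch_j$ before or after $\posReq^\opt_j$ then kills the $|\posReq^\opt_j-\posSch_j|$ term and produces $2\optS-2\timeSch_{j-1}$. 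Your substitute --- walking from $\posSch_j$ to an endpoint of $[y_-^{S_j},y_+^{S_j}]$ and ``completing an optimal schedule for $\sigma_{S_j}$ from that endpoint'' --- does not admit an analogous bound: $\opt$ may visit $y_+^{S_j}$ (say) long before or long after it has served $\sigma_{S_j}$, so $L(\timeSch_j,y_+^{S_j},\sigma_{S_j})$ is not controlled by $\optS$ minus a time at which $\opt$ passes there. The span observation (``$\optS$ is at least each of the corresponding spans'') is strictly weaker than what is needed; it ignores precisely the release-time information that you flag as possibly necessary but never incorporate, and nothing in the listed ingredients produces the factor $\frac{2}{\Theta}$ in front of $|\posSch_j|$. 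The auxiliary facts you invoke about $S_{j-1}$ ($\ell_{j-1}\le(\Theta-1)\timeSch_{j-1}$ and $|\posSch_{j-1}|\le\timeSch_{j-1}$) are not used by the paper here and would not by themselves give you the needed lower bound on $\optS$. So the crux case is acknowledged but not actually proved, and the route chosen for it appears to need the missing $\posReq^\opt_j$-argument (or a replacement doing the same work) to go through.
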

\begin{proof}
First, we notice that by the triangle inequality we have
\begin{equation}\label{equation: Triangle Zero}
L(\timeSch_j,\posSch_j,\sigma_{S_j})\le |\posSch_j|+ L(\timeSch_j,0,\sigma_{S_j})\le \optS+|\posSch_j|.
\end{equation}
Now, let~$\sigma^{\opt}_{S_j}$ be the first request of~$\sigma_{S_j}$ that is picked up by $\opt$ and let~$\posReq^\opt_j$ be its starting point and~$\timeReq^\opt_j$ be its release time. We have
\begin{equation}\label{equation: Triangle First Opt}
L(\timeSch_j,\posSch_j,\sigma_{S_j})\le |\posReq^\opt_j-\posSch_j|+ L(\timeSch_j,\posReq^\opt_j,\sigma_{S_j}),
\end{equation}
again by the triangle inequality. Since \opt serves all requests of $\sigma_{S_j}$ starting at position $\posReq^\opt_j$ no earlier than time $\timeReq^\opt_j$, we have
\begin{equation}\label{equation: Opt Final Schedule}
L(\timeSch_j,\posReq^\opt_j,\sigma_{S_j})\overset{\timeReq^\opt_j\le \timeSch_j}{\le}L(\timeReq^\opt_j,\posReq^\opt_j,\sigma_{S_j})\le \optS-\timeReq^\opt_j,
\end{equation}
which yields
\begin{align}
L(\timeSch_j,\posSch_j,\sigma_{S_j})&\myoverset{(\ref{equation: Triangle First Opt})}{\le}{35}|\posReq^\opt_j-\posSch_j|+ L(\timeSch_j,\posReq^\opt_j,\sigma_{S_j})\nonumber\\
&\myoverset{(\ref{equation: Opt Final Schedule})}{\le}{35}\optS+|\posReq^\opt_j-\posSch_j|-\timeReq^\opt_j\nonumber\\
&\myoverset{$\timeSch_{j-1}<\timeReq^\opt_j$}{<}{35}\optS+|\posReq^\opt_j-\posSch_j|-\timeSch_{j-1}.\label{equation: Schedule Length Via Opt Start}
\end{align}
Since $\posSch_j$ is the destination of a request, \opt needs to visit it. In the case that $\opt$ visits $\posSch_j$ before collecting $\sigma^\opt_{S_j}$, \opt still has to collect and serve every request of $\sigma_{S_j}$ after it has visited position $\posSch_j$ the first time, which directly implies
\begin{equation*}
\left(1+\frac{\Theta}{\Theta+2}\right)\optS>\optS\ge L(|\posSch_j|,\posSch_j,\sigma_{S_j})\overset{|\posSch_j|\le \timeSch_j}{\ge} L(\timeSch_j,\posSch_j,\sigma_{S_j}).
\end{equation*}
On the other hand, if \opt collects $\sigma^\opt_{S_j}$ before visiting the position $\posSch_j$, we have
\begin{equation}
\timeSch_{j-1}+|\posReq^\opt_j-\posSch_j|\overset{\timeSch_{j-1}<\timeReq^\opt_j}{<}\timeReq^\opt_j+|\posReq^\opt_j-\posSch_j|\le \optS,\label{equation: Opt Collect Before Visit}
\end{equation}
since \opt cannot collect $\sigma^\opt_{S_j}$ before time $\timeReq^\opt_j$ and then still has to visit position $\posSch_j$. 
Thus, we have
\begin{align}
L(\timeSch_j,\posSch_j,\sigma_{S_j})&\myoverset{(\ref{equation: Schedule Length Via Opt Start})}{<}{35} \optS+|\posReq^\opt_j-\posSch_j|-\timeSch_{j-1}\nonumber\\
&\myoverset{(\ref{equation: Opt Collect Before Visit})}{\le}{35}2\optS-2\timeSch_{j-1}\nonumber\\
&\myoverset{\text{Lem }\ref{lemma: Lower Bound Starting Time}}{\le}{35}2\optS-2\frac{|\posSch_j|}{\Theta}.\label{equation: Second Approximation For Schedule Length}
\end{align}
This implies
\begin{align*}
L(\timeSch_j,\posSch_j,\sigma_{S_j})&\myoverset{(\ref{equation: Triangle Zero}),(\ref{equation: Second Approximation For Schedule Length})}{\le}{25} \min\left\{\optS+|\posSch_j|,2\optS-\frac{2}{\Theta}|\posSch_j|\right\}\\
&\myoverset{}{\le}{25}\left(1+\frac{\Theta}{\Theta+2}\right)\optS,
\end{align*}
since the minimum above is largest for $|\posSch_j|=\frac{\Theta}{\Theta+2}\optS$.
\end{proof}

The following proposition uses \Cref{lemma: Costs per Schedule} to provide an upper bound for the competitive ratio of $\smartstart$, in the case, where $\smartstart$ does have a waiting period before starting the final schedule.

\begin{proposition}\label{proposition: Upper Bound Waiting}
In the case that $\smartstart$ waits before executing $S_N$, we have
\begin{equation*}
\frac{\smartstart(\sigma)}{\opt(\sigma)} \le f_1(\Theta) := \frac{2\Theta^2+2\Theta}{\Theta^2+\Theta-2}.
\end{equation*}
\end{proposition}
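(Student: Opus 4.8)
The plan is to exploit the defining property of the waiting case to collapse the completion time into a single scalar multiple of the length of the final schedule, and then to invoke \Cref{lemma: Costs per Schedule} to bound that length against $\optS$. The whole argument is essentially a substitution plus one appeal to an already-established lemma.

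First I would use that, since \smartstart waits before executing $S_N$, \Cref{equation: Starting Time No Wait} gives $\timeSch_N=\frac{1}{\Theta-1}L(\timeSch_N,\posSch_N,\sigma_{S_N})$, i.e.\ $L(\timeSch_N,\posSch_N,\sigma_{S_N})=(\Theta-1)\timeSch_N$. Plugging this into the completion-time identity \Cref{equation: Costs Smartstart} yields
\begin{equation*}
\smartstartS=\timeSch_N+L(\timeSch_N,\posSch_N,\sigma_{S_N})=\Theta\,\timeSch_N=\frac{\Theta}{\Theta-1}\,L(\timeSch_N,\posSch_N,\sigma_{S_N}).
\end{equation*}
Next I would apply \Cref{lemma: Costs per Schedule} to the schedule $S_N$, giving $L(\timeSch_N,\posSch_N,\sigma_{S_N})\le\bigl(1+\frac{\Theta}{\Theta+2}\bigr)\optS=\frac{2\Theta+2}{\Theta+2}\optS$. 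Combining the two estimates gives
\begin{equation*}
\smartstartS\le\frac{\Theta}{\Theta-1}\cdot\frac{2\Theta+2}{\Theta+2}\,\optS=\frac{2\Theta^2+2\Theta}{\Theta^2+\Theta-2}\,\optS=f_1(\Theta)\,\optS,
\end{equation*}
using $(\Theta-1)(\Theta+2)=\Theta^2+\Theta-2$; dividing by $\opt(\sigma)$ finishes the proof.

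I do not expect any real obstacle in this proposition itself: the only thing to verify is the elementary denominator factorization and the reduction of $\smartstartS$ to $\frac{\Theta}{\Theta-1}L(\timeSch_N,\posSch_N,\sigma_{S_N})$. The genuinely delicate work — the case distinction on whether \opt collects the first request of $\sigma_{S_N}$ before or after it visits $\posSch_N$, and the optimization over $|\posSch_j|$ — has already been carried out inside \Cref{lemma: Costs per Schedule} (which in turn relies on \Cref{lemma: Lower Bound Starting Time}), so here it can simply be cited. The one point worth a sentence of care is that the bound of \Cref{lemma: Costs per Schedule} applies verbatim to $j=N$, which it does since the lemma is stated for \emph{every} schedule $S_j$.
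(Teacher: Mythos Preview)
Your proposal is correct and follows essentially the same route as the paper: use the waiting-case identity to rewrite $\smartstartS=\Theta\,\timeSch_N=\frac{\Theta}{\Theta-1}L(\timeSch_N,\posSch_N,\sigma_{S_N})$, then plug in the bound from \Cref{lemma: Costs per Schedule} and simplify the resulting fraction. There is nothing to add.
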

\begin{proof}
Assume $\smartstart$ waits before starting the final schedule. Then we have
\begin{equation}
\timeSch_N+L(\timeSch_N,\posSch_N,\sigma_{S_N})=\Theta \timeSch_N\label{equation: Smartstart Definition Rearranged}
\end{equation}
by definition of $\smartstart$. This implies
\begin{align*}
\smartstartS &\myoverset{(\ref{equation: Costs Smartstart})}{=}{15}\timeSch_N + L(\timeSch_N,\posSch_N,\sigma_{S_N})
\myoverset{(\ref{equation: Smartstart Definition Rearranged})}{=}{15}\Theta \timeSch_N
\myoverset{(\ref{equation: Starting Time No Wait})}{=}{15} \frac{\Theta}{\Theta-1}L(\timeSch_N,\posSch_N,\sigma_{S_N}).
\end{align*}
\Cref{lemma: Costs per Schedule} thus yields the claimed bound:
\begin{align*}
\smartstartS &\myoverset{}{=}{35} \frac{\Theta}{\Theta-1}L(\timeSch_N,\posSch_N,\sigma_{S_N})\\
&\myoverset{Lem \ref{lemma: Costs per Schedule}}{\le}{35} \frac{\Theta}{\Theta-1}\left(1+\frac{\Theta}{\Theta+2}\right)\optS\\
&\myoverset{}{=}{35} \frac{2\Theta^2+2\Theta}{\Theta^2+\Theta-2}\optS.\qedhere
\end{align*}
\end{proof}

It remains to examine the case, where the algorithm $\smartstart$ has no waiting period before starting the final schedule. We start with two lemmas that give us an upper bounds for the length of a schedule depending on its extreme positions. 

\begin{lemma}\label{lemma: Approx Schedule From Zero}
Let $S_j$ with $j\in\{1,\dots,N\}$ be a schedule of $\smartstart$. Moreover, let $\optS=|x_-|+x_++y$ for some $y\ge 0$. Then, we have
\begin{equation*}
L(\timeSch_j,0,\sigma_{S_j})\le |\min\{0,y^{S_j}_-\}|+\max\{0,y^{S_j}_+\}+y.
\end{equation*}
\end{lemma}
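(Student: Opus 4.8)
The plan is to first reduce to bounding $L(0,0,\sigma_{S_j})$ and then to build a feasible schedule for $\sigma_{S_j}$ starting at the origin by \emph{projecting} the optimal trajectory $\opt$ onto a small interval. Since $0 \le \timeSch_j$, inequality $(\ref{equation: Schedule Time})$ gives $L(\timeSch_j,0,\sigma_{S_j}) \le L(0,0,\sigma_{S_j})$, so it is enough to bound $L(0,0,\sigma_{S_j})$. Set $\lambda := \min\{0,y^{S_j}_-\} \le 0$ and $\rho := \max\{0,y^{S_j}_+\} \ge 0$, so the claimed inequality reads $L(0,0,\sigma_{S_j}) \le \rho - \lambda + y$. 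Two elementary facts will be used repeatedly: every position occurring in a request of $\sigma_{S_j}$ lies in $[\lambda,\rho]$ (by definition of $y^{S_j}_{\pm}$), and $x_- \le \lambda \le 0 \le \rho \le x_+$ (the extreme positions over all of $\sigma$ are at least as extreme as those over $\sigma_{S_j}$, and $x_- \le 0 \le x_+$).

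For the projection step, let $\gamma$ be the trajectory of $\opt$, so $\gamma(t)$ is the position of $\opt$ at time $t$; by \Cref{section: Preliminaries} we may take $\gamma$ to stay in $[x_-,x_+]$, and it has $\gamma(0)=0$, speed at most $1$, and total length $\optS$. Define the clamped trajectory $\tilde\gamma := \max\{\lambda,\min\{\rho,\gamma\}\}$; it still has speed at most $1$ and $\tilde\gamma(0)=0$. Whenever $\opt$ collects or delivers a request of $\sigma_{S_j}$ it does so at a position in $[\lambda,\rho]$, hence at a moment where $\gamma$ and $\tilde\gamma$ coincide; performing the same collections and deliveries along $\tilde\gamma$ therefore yields a schedule that serves all of $\sigma_{S_j}$, respects every release time (the collection times are exactly those used by $\opt$), and at every moment carries a subset of the requests carried by $\opt$, so the capacity constraint is met. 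Hence $L(0,0,\sigma_{S_j})$ is at most the length of $\tilde\gamma$.

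It remains to bound the length of $\tilde\gamma$. Clamping removes from $\gamma$ exactly the excursions outside $[\lambda,\rho]$, so the length of $\tilde\gamma$ equals $\optS$ minus the total variation of $t\mapsto\max\{\gamma(t)-\rho,0\}$ minus the total variation of $t\mapsto\max\{\lambda-\gamma(t),0\}$. The first of these functions starts at $0$ and attains the value $x_+-\rho\ge 0$ (as $\gamma$ reaches $x_+$), so its total variation is at least $x_+-\rho$; similarly the second starts at $0$ and attains $\lambda-x_-\ge 0$, so its total variation is at least $\lambda-x_-$. Therefore the length of $\tilde\gamma$ is at most $\optS-(x_+-\rho)-(\lambda-x_-)$, and substituting $\optS=|x_-|+x_++y=-x_-+x_++y$ and simplifying gives $\rho-\lambda+y$, as desired.

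The one step I expect to require genuine care is the length accounting in the previous paragraph: one must verify that clamping a speed-$\le 1$ trajectory splits its total length additively as the length of $\tilde\gamma$ plus the total variations of $\max\{\gamma-\rho,0\}$ and $\max\{\lambda-\gamma,0\}$. This is geometrically clear — at almost every time exactly one of these three quantities is changing — but it is the only point beyond bookkeeping; the reduction via $(\ref{equation: Schedule Time})$, the chain $x_-\le\lambda\le 0\le\rho\le x_+$, the feasibility of the projected schedule, and the closing algebra are all routine.
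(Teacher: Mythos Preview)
There is a genuine gap. The quantity $L(t,p,R)$ measures the \emph{duration} of a shortest feasible schedule (completion time minus start time), not the arc length of its trajectory; this is what makes $(\ref{equation: Schedule Time})$ a nontrivial inequality and what makes $\smartstartS=\timeSch_N+L(\timeSch_N,\posSch_N,\sigma_{S_N})$ the completion time. Your projected trajectory $\tilde\gamma$ is indeed a feasible time-parameterised schedule for $\sigma_{S_j}$ from the origin at time $0$, but its \emph{duration} is the time at which it delivers the last request of $\sigma_{S_j}$, which is the same time $\opt$ does so and hence can be as large as $\optS$. What you bound in the last paragraph is only the \emph{arc length} of $\tilde\gamma$. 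The step ``Hence $L(0,0,\sigma_{S_j})$ is at most the length of $\tilde\gamma$'' silently switches from duration to arc length, and that switch is unjustified: removing the idle stretches of $\tilde\gamma$ to convert arc length into duration would move pickups earlier and may violate release times.

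In fact the intermediate inequality you aim for, $L(0,0,\sigma_{S_j})\le |\lambda|+\rho+y$, is false in general. Take $\sigma=\{(10,10;0),(0,0;100)\}$, so $x_-=0$, $x_+=10$, $\optS=100$ and hence $y=90$. With (say) $\Theta=2$, \smartstart serves the first request in $S_1$ and the second in $S_2$, so $\sigma_{S_2}=\{(0,0;100)\}$, giving $\lambda=\rho=0$ and $|\lambda|+\rho+y=90$. But $L(0,0,\sigma_{S_2})=100>90$. The lemma itself still holds here because $\timeSch_2\ge 100$ forces $L(\timeSch_2,0,\sigma_{S_2})=0$. The point is that your very first reduction via $(\ref{equation: Schedule Time})$ discards exactly the information that makes the lemma true: every request of $\sigma_{S_j}$ is released by time $\timeSch_j$. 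The paper's proof stays at time $\timeSch_j$; since no waiting is needed there, duration equals arc length, and then your projection/clamping idea (which is essentially what the paper does) goes through.
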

\begin{proof}
We need to analyze the amount of time the server needs to serve $\sigma_{S_j}$ starting from position $0$ at time $\timeSch_j$. First of all, note that the server does not wait at any point, since all requests of $\sigma_{S_j}$ already have appeared at time $\timeSch_j$. Because of that, the server cannot go to the left of $\min\{0,y^{S_j}_-\}$ or to the right of $\max\{0,y^{S_j}_+\}$ while staying on an optimal route. Furthermore, we notice that the route $\opt$ takes to serve $\sigma$ is a valid route to serve $\sigma_{S_j}$, since $\sigma_{S_j}\subseteq\sigma$. However, we can skip every part of the route $\opt$ takes to collect $\sigma$ that lies left of $\min\{0,y^{S_j}_-\}$ or right of $\max\{0,y^{S_j}_+\}$, since no requests of $\sigma_{S_j}$ have a starting or ending point that lies in those regions. Since all requests already have appeared at time $\timeSch_j$, this does not produce additional waiting time, i.e., we can just delete the parts of the route that lie left of $\min\{0,y^{S_j}_-\}$ and right of $\max\{0,y^{S_j}_+\}$ and still have a valid route for serving $\sigma_{S_j}$ when starting at time $\timeSch_j$. This shortens the length of the route by at least 
\begin{equation*}
|x_-|-|\min\{0,y^{S_j}_-\}|+x_+-\max\{0,y^{S_j}_+\},
\end{equation*}
which gives us
\begin{align*}
L(\timeSch_j,0,\sigma_{S_j})&\le\optS-(|x_-|-|\min\{0,y^{S_j}_-\}|+x_+-\max\{0,y^{S_j}_+\})\\
&=|\min\{0,y^{S_j}_-\}|+\max\{0,y^{S_j}_+\}+y,
\end{align*}
as desired.
\end{proof}

\begin{lemma}\label{lemma: Approx Schedule Only One Side}
Let $S_j$ with $j\in\{1,\dots,N\}$ be a schedule of $\smartstart$. Moreover, let $\optS=|x_-|+x_++y$ for some $y\ge 0$. Then we have
\begin{equation*}
L(\timeSch_j,\max\{0,y^{S_j}_-\}+\min\{0,y^{S_j}_+\},\sigma_{S_j})\le y^{S_j}_+-y^{S_j}_-+y.
\end{equation*}
\end{lemma}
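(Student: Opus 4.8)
The plan is to reuse the truncation idea from the proof of \Cref{lemma: Approx Schedule From Zero}. Set $I := [y^{S_j}_-,\, y^{S_j}_+]$; by definition all positions occurring in requests of $\sigma_{S_j}$ lie in $I$. The trajectory of \opt serves all of $\sigma$, hence in particular $\sigma_{S_j}$, and we may assume it stays within $[x_-,x_+]$ and visits both of these endpoints (each being either a request position or the origin). I would obtain a schedule for $\sigma_{S_j}$ by deleting from \opt's trajectory every maximal piece that stays strictly outside $I$ and concatenating what remains. Because every request of $\sigma_{S_j}$ has appeared by time $\timeSch_j$, this deletion creates no waiting for a schedule started at time $\timeSch_j$; moreover, the pickup and drop-off positions of the requests in $\sigma_{S_j}$ all lie in $I$, so they are untouched and visited in the same order (and hence within the same capacity), and the concatenated trajectory stays within $I$. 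Thus it is a feasible schedule for $\sigma_{S_j}$.

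First I would identify where this truncated trajectory starts. Since \opt starts at the origin, the first point of $I$ it reaches is the projection of $0$ onto $I$: if $0<y^{S_j}_-$ it enters $I$ at $y^{S_j}_-$, if $0>y^{S_j}_+$ it enters at $y^{S_j}_+$, and if $0\in I$ it starts inside; in every case this is exactly $\max\{0,y^{S_j}_-\}+\min\{0,y^{S_j}_+\}$. Hence the truncated trajectory is a feasible schedule for $\sigma_{S_j}$ starting from the position in the statement, and $L\bigl(\timeSch_j,\,\max\{0,y^{S_j}_-\}+\min\{0,y^{S_j}_+\},\,\sigma_{S_j}\bigr)$ is at most its length.

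It remains to lower-bound the total length of the deleted pieces. As $y^{S_j}_\pm$ are an infimum/supremum over a subset of the positions of $\sigma$, we have $x_-\le y^{S_j}_-\le y^{S_j}_+\le x_+$. Since \opt visits $x_-$ and it also visits $y^{S_j}_-$ (the leftmost position of a request in $\sigma_{S_j}$), its trajectory contains, between the last visit to $y^{S_j}_-$ preceding a visit to $x_-$ and that visit to $x_-$, a sub-walk from $y^{S_j}_-$ to $x_-$ that never rises above $y^{S_j}_-$; this sub-walk has length at least $y^{S_j}_- - x_-$ and lies in a deleted piece. Symmetrically, the visits to $y^{S_j}_+$ and $x_+$ force a deleted sub-walk of length at least $x_+ - y^{S_j}_+$, and since the two sub-walks occupy disjoint position ranges (one below $y^{S_j}_-$, one above $y^{S_j}_+$), the deletion shortens \opt's trajectory by at least $(y^{S_j}_- - x_-)+(x_+ - y^{S_j}_+)$. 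Combining this with $\optS=|x_-|+x_++y=(x_+-x_-)+y$ yields a schedule of length at most $\optS-(y^{S_j}_- - x_-)-(x_+ - y^{S_j}_+)=y^{S_j}_+-y^{S_j}_-+y$, which is the claimed bound.

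The step that needs the most care is this last length estimate: one must make precise which excursions of \opt carry the savings, check that they lie entirely inside deleted pieces and do not overlap, especially when \opt starts outside $I$ (so that the opening segment of its trajectory, up to the point $\max\{0,y^{S_j}_-\}+\min\{0,y^{S_j}_+\}$, is itself among the deleted pieces), and dispose of the degenerate case $y^{S_j}_-=y^{S_j}_+$ separately (there $I$ is a single point, the starting position equals $y^{S_j}_-$, and the bound $L(\timeSch_j,y^{S_j}_-,\sigma_{S_j})\le y$ holds trivially since the server is already at the only relevant position).
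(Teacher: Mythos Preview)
Your proposal is correct and follows the same truncation idea as the paper. The paper splits into the three cases $0<y^{S_j}_-$, $y^{S_j}_-\le 0\le y^{S_j}_+$, and $y^{S_j}_+<0$ (the middle one reducing to \Cref{lemma: Approx Schedule From Zero}), while you truncate directly to $I=[y^{S_j}_-,y^{S_j}_+]$ and identify the entry point as the projection of the origin onto $I$; this is a cleaner, case-free packaging of the same argument and yields the identical savings estimate $(y^{S_j}_- - x_-)+(x_+ - y^{S_j}_+)$.

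The one place to tighten is exactly the one you flag. As phrased, ``the last visit to $y^{S_j}_-$ preceding a visit to $x_-$'' need not exist: when $0<y^{S_j}_-$ and \opt first goes left to $x_-$ before ever reaching $y^{S_j}_-$, every visit to $x_-$ precedes every visit to $y^{S_j}_-$. The fix is immediate: take instead the \emph{nearest} crossing of $y^{S_j}_-$ on either side of the chosen visit to $x_-$ (one of the two directions always exists, since \opt visits both $x_-$ and $y^{S_j}_-$); the sub-walk between them still stays below $y^{S_j}_-$, lies in a deleted piece, and has length at least $y^{S_j}_- - x_-$. With that adjustment your argument is complete.
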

\begin{proof}
First note that the case $\max\{0,y^{S_j}_-\}=\min\{0,y^{S_j}_+\}=0$ follows from \Cref{lemma: Approx Schedule From Zero}. Assume we have $\max\{0,y^{S_j}_-\}=y^{S_j}_-$. Then all requests of $\sigma_{S_j}$ have starting and ending point on the right side of the origin and we have
\begin{equation*}
0\le y^{S_j}_-\le y^{S_j}_+,
\end{equation*}
i.e., $\min\{0,y^{S_j}_+\}=0$. Therefore, we need to examine $L(\timeSch_j,y^{S_j}_-,\sigma_{S_j})$, i.e., the length of the optimal offline schedule serving the set of requests  $\sigma_{S_j}$ and starting from position $y^{S_j}_-$ at time $\timeSch_j$. We note that the server does not wait at any point, since all requests of $\sigma_{S_j}$ already have appeared at time $\timeSch_j$. Because of that, the server cannot go to the left of $y^{S_j}_-$ or to the right of $y^{S_j}_+$ while staying on an optimal route. Furthermore, we notice that $\opt$ cannot collect any requests of~$\sigma_{S_j}$ before passing $y^{S_j}_-$ for the first time, since $\opt$ starts at the origin. Therefore, removing the parts of the path that $\opt$ takes until it first crosses $y^{S_j}_-$, gives us a valid route to serve $\sigma_{S_j}$ since $\sigma_{S_j}\subseteq\sigma$. Additionally, we can skip every part of the route $\opt$ takes to collect requests that lie left of $0$ or right of $y^{S_j}_+$ since no requests of $\sigma_{S_j}$ have a starting or ending point that lies in those regions. Again, this does not produce additional waiting time. This shortens the length of the route by at least 
\begin{equation*}
|x_-|+y^{S_j}_-+x_+-y^{S_j}_+,
\end{equation*}
which gives us
\begin{align*}
L(\timeSch_j,y^{S_j}_-,\sigma_{S_j})&\le\optS-(|x_-|+y^{S_j}_-+x_+-y^{S_j}_+)\\
&=y^{S_j}_+-y^{S_j}_-+y,
\end{align*}
as desired. 

It remains to examine the case $\min\{0,y^{S_j}_+\}=y^{S_j}_+$. In this case all requests of $\sigma_{S_j}$ have starting and ending point on the left side of the origin and we have
\begin{equation*}
y^{S_j}_-\le y^{S_j}_+\le 0,
\end{equation*}
i.e., $\max\{0,y^{S_j}_-\}=0$. Therefore, we have to examine $L(\timeSch_j,y^{S_j}_+,\sigma_{S_j})$. From this point the proof works analogously to the former case with the roles of $y^{S_j}_+$ and $y^{S_j}_-$ switched, in particular, we have
\begin{align*}
L(\timeSch_j,y^{S_j}_+,\sigma_{S_j})&\le\optS-(|x_-|+y^{S_j}_-+x_+-y^{S_j}_+)\\
&=y^{S_j}_+-y^{S_j}_-+y,
\end{align*}
as desired.
\end{proof}
Next, we give an upper bound for the rightmost position that can be reached during a schedule.

\begin{lemma}\label{lemma: Rightmost Position}
Let $S_j$ with $j\in\{1,\dots,N\}$ be a schedule of $\smartstart$. Moreover, let $|x_-|\le x_+$ and $\optS=|x_-|+x_++y$ for some $y\ge 0$. Then, for every point~$p$ that is visited by $S_j$ we have
\begin{equation*}
p\le |\posSch_j|+|\posSch_j-\posSch_{j+1}|+y-|\min\{0,y^{S_j}_-\}|.
\end{equation*}
\end{lemma}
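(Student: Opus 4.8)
The plan is to isolate the rightmost point visited by $S_j$ and to squeeze the length $L(\timeSch_j,\posSch_j,\sigma_{S_j})$ of $S_j$ between a geometric lower bound and the upper bounds of \Cref{lemma: Approx Schedule From Zero,lemma: Approx Schedule Only One Side}. First I would record that, since $\opt$ starts in the origin, has to visit $x_-$ and $x_+$, need not return, and $|x_-|\le x_+$, one has $\optS\ge 2|x_-|+x_+$ and hence $y\ge|x_-|\ge|\min\{0,y^{S_j}_-\}|$; in particular $y-|\min\{0,y^{S_j}_-\}|\ge 0$, so the claimed bound is at least $|\posSch_j|+|\posSch_j-\posSch_{j+1}|$.

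Since the right-hand side does not depend on $p$, it suffices to bound the rightmost point visited by $S_j$. As all requests of $\sigma_{S_j}$ have appeared by time $\timeSch_j$, an optimal schedule never overshoots an extreme, so this point equals $p=\max\{\posSch_j,y^{S_j}_+\}$, while the leftmost visited point is $q=\min\{\posSch_j,y^{S_j}_-\}$. If $p=\posSch_j$, the claim reads $\posSch_j\le|\posSch_j|+|\posSch_j-\posSch_{j+1}|+y-|\min\{0,y^{S_j}_-\}|$, which holds since $|\posSch_j|\ge\posSch_j$ and the remaining summands are nonnegative by the first step; so I may assume $p=y^{S_j}_+\ge\posSch_j$.

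For the lower bound: $\posSch_{j+1}$ is the destination of a request of $\sigma_{S_j}$, hence $q\le\min\{\posSch_j,\posSch_{j+1}\}$ and $p\ge\max\{\posSch_j,\posSch_{j+1}\}$; since $S_j$ is a trajectory from $\posSch_j$ to $\posSch_{j+1}$ touching $q$ and $p$, splitting it at the last visits to $q$ and to $p$ yields $L(\timeSch_j,\posSch_j,\sigma_{S_j})\ge 2(p-q)-|\posSch_j-\posSch_{j+1}|$. For the upper bound, set $c:=\max\{0,y^{S_j}_-\}+\min\{0,y^{S_j}_+\}$; the triangle inequality~\eqref{equation: Schedule Triangule Eq} and \Cref{lemma: Approx Schedule Only One Side} give $L(\timeSch_j,\posSch_j,\sigma_{S_j})\le|\posSch_j-c|+L(\timeSch_j,c,\sigma_{S_j})\le|\posSch_j-c|+y^{S_j}_+-y^{S_j}_-+y$. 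Substituting $y^{S_j}_+=p$ into the upper bound, combining with the lower bound, and cancelling one copy of $p$ leaves $p\le 2q-y^{S_j}_-+|\posSch_j-c|+|\posSch_j-\posSch_{j+1}|+y$, so it remains to verify the elementary estimate $2q-y^{S_j}_-+|\posSch_j-c|\le|\posSch_j|-|\min\{0,y^{S_j}_-\}|$.

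I would conclude by a case distinction according to whether the positions appearing in $\sigma_{S_j}$ lie entirely right of the origin, entirely left of it, or straddle it: each case pins down $c$ and $q$ (and, in the all-left case, uses $\posSch_j\le y^{S_j}_+<0$ from the previous step) and reduces the remaining estimate to a one-line computation. I expect this case distinction to be the main obstacle; the key structural point is that dispatching $p=\posSch_j$ separately — which is exactly where the hypothesis $|x_-|\le x_+$ and the resulting inequality $y\ge|\min\{0,y^{S_j}_-\}|$ are used — lets the sandwich argument proceed cleanly with $p=y^{S_j}_+$, and one must take care to keep the two bounds tight (not relaxing $y^{S_j}_+$ to $p$ earlier than the cancellation). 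Should the base point $c$ be awkward in some case, \Cref{lemma: Approx Schedule From Zero} supplies the alternative estimate $L(\timeSch_j,\posSch_j,\sigma_{S_j})\le|\posSch_j|+L(\timeSch_j,0,\sigma_{S_j})$.
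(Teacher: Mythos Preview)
Your proposal is correct and follows essentially the same sandwich strategy as the paper: reduce to bounding $y^{S_j}_+$, squeeze $L(\timeSch_j,\posSch_j,\sigma_{S_j})$ between a geometric lower bound (coming from the schedule visiting both extremes $q$ and $p$) and the upper bounds of \Cref{lemma: Approx Schedule From Zero,lemma: Approx Schedule Only One Side}, then finish by a sign-based case distinction. The paper argues by contradiction and splits on the sign of $q=\min\{\posSch_j,y^{S_j}_-\}$ (using \Cref{lemma: Approx Schedule From Zero} when $q\le 0$ and \Cref{lemma: Approx Schedule Only One Side} when $q>0$), whereas you argue directly with the cleaner lower bound $L\ge 2(p-q)-|\posSch_j-\posSch_{j+1}|$ and use \Cref{lemma: Approx Schedule Only One Side} uniformly via the base point $c$; these are stylistic differences, not substantive ones.
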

\begin{proof}
First of all, we notice that $S_j$ does not wait at any point since all requests of $\sigma_{S_j}$ already have appeared at time $\timeSch_j$. Because of that, $S_j$ cannot go to the left of $\min\{\posSch_j,y^{S_j}_-\}$ or to the right of $\max\{\posSch_j,y^{S_j}_+\}$ while staying on a optimal route. It suffices to show
\begin{equation}
\max\{\posSch_j,y^{S_j}_+\}\le |\posSch_j|+|\posSch_j-\posSch_{j+1}|+y-|\min\{0,y^{S_j}_-\}|.\label{equation: Characterization Rightmost Point}
\end{equation}
Since $|x_-|\le x_+$ implies $\optS\ge 2|x_-|+x_+$, we have $y\ge |x_-|\ge |\min\{0,y^{S_j}_-\}|$. Note that $|x_-|\ge |\min\{0,y^{S_j}_-\}|$ holds since we always have $|x_-|\ge 0$ and $x_-\le y^{S_j}_-$, i.e., $|x_-|\ge |y^{S_j}_-|$ if $x_-<0$ and $y^{S_j}_-<0$ holds. This implies that if we have $\max\{\posSch_j,y^{S_j}_+\}=\posSch_j$, inequality (\ref{equation: Characterization Rightmost Point}) holds.
Thus, we may assume $\max\{\posSch_j,y^{S_j}_+\}=y^{S_j}_+$ in the following. Similarly as before, if we have $y^{S_j}_+<0$, the inequality (\ref{equation: Characterization Rightmost Point}) again holds, since the right hand side is always non-negative. We may thus assume $y_+^{S_j}\ge 0$, i.e.,
\begin{equation}
\max\{0,y^{S_j}_+\}=y_+^{S_j}\label{equation: Positivity Of Y}
\end{equation}
in the following. According to the triangle inequality and \Cref{lemma: Approx Schedule From Zero}, we have
\begin{align}
L(\timeSch_j,\posSch_j,\sigma_{S_j})&\myoverset{}{\le}{35} |\posSch_j|+L(\timeSch_j,0,\sigma_{S_j})\nonumber\\
&\myoverset{Lem \ref{lemma: Approx Schedule From Zero}}{\le}{35} |\posSch_j|+|\min\{0,y^{S_j}_-\}|+\max\{0,y^{S_j}_+\}+y.\nonumber\\
&\myoverset{(\ref{equation: Positivity Of Y})}{=}{35} |\posSch_j|+|\min\{0,y^{S_j}_-\}|+y^{S_j}_++y.\label{equation: First Approx For Schedule Length}
\end{align}
For the sake of contradiction, we assume 
\begin{equation}
y^{S_j}_+> |\posSch_j|+|\posSch_j-\posSch_{j-1}|+y-|\min\{0,y^{S_j}_-\}|.\label{equation: Contradiction For Schedule Length}
\end{equation}
Since $S_j$ has to visit both extreme points $\max\{\posSch_j,y^{S_j}_+\}=y^{S_j}_+$ and $\min\{\posSch_j,y^{S_j}_-\}$, we have two possible scenarios: $S_j$ either visits $\smash{\min\{\posSch_j,y^{S_j}_-\}}$ before $\smash{y^{S_j}_+}$ or $S_j$ visits $\smash{\min\{\posSch_j,y^{S_j}_-\}}$ after $\smash{y^{S_j}_+}$. In both cases $S_j$ ends in $\posSch_{j+1}$. In the first case, we have
\begin{align}
L(\timeSch_j,\posSch_j,\sigma_{S_j})&\myoverset{}{\ge}{8} |\posSch_j-\min\{\posSch_j,y^{S_j}_-\}|+|\min\{\posSch_j,y^{S_j}_-\}-y_+^{S_j}|+|y_+^{S_j}-\posSch_{j+1}|\nonumber\\
&\myoverset{}{=}{8} \posSch_j-\min\{\posSch_j,y^{S_j}_-\}+y_+^{S_j}-\min\{\posSch_j,y^{S_j}_-\}+y_+^{S_j}-\posSch_{j+1}\nonumber\\
&\myoverset{}{=}{8} \posSch_j-2\min\{\posSch_j,y^{S_j}_-\}+2y_+^{S_j}-\posSch_{j+1}\nonumber\\
&\myoverset{(\ref{equation: Contradiction For Schedule Length})}{>}{8} \posSch_j-2\min\{\posSch_j,y^{S_j}_-\}+y_+^{S_j}+|\posSch_j|+|\posSch_j-\posSch_{j+1}|\nonumber\\
&\qquad+y-|\min\{0,y^{S_j}_-\}|-\posSch_{j+1}\nonumber\\
&\myoverset{}{\ge}{8} y_+^{S_j}+|\posSch_j|+y-|\min\{0,y^{S_j}_-\}|-2\min\{\posSch_j,y^{S_j}_-\}.\label{equation: Schedule Length Approx Two Case One}
\end{align}
In the second case, we obtain the same result
\begin{align}
L(\timeSch_j,\posSch_j,\sigma_{S_j})&\myoverset{}{\ge}{8} |\posSch_j-y_+^{S_j}|+|y_+^{S_j}-\min\{\posSch_j,y_-^{S_j}\}|+|\min\{\posSch_j,y_-^{S_j}\}-\posSch_{j+1}|\nonumber\\
&\myoverset{}{=}{8} y_+^{S_j}-\posSch_j+y_+^{S_j}-\min\{\posSch_j,y_-^{S_j}\}+\posSch_{j+1}-\min\{\posSch_j,y_-^{S_j}\}\nonumber\\
&\myoverset{}{=}{8} \posSch_{j+1}+2y_+^{S_j}-2\min\{\posSch_j,y_-^{S_j}\}-\posSch_j\nonumber\\
&\myoverset{(\ref{equation: Contradiction For Schedule Length})}{>}{8} \posSch_{j+1}+y_+^{S_j}+|\posSch_j|+|\posSch_j-\posSch_{j+1}|+y\nonumber\\
&\qquad-|\min\{0,y_-^{S_j}\}|-2\min\{\posSch_j,y_-^{S_j}\}-\posSch_j\nonumber\\
&\myoverset{}{\ge}{8} y_+^{S_j}+|\posSch_j|+y-|\min\{0,y^{S_j}_-\}|-2\min\{\posSch_j,y^{S_j}_-\}.\label{equation: Schedule Length Approx Two Case Two}
\end{align}
Now we again consider two cases.

\paragraph*{\textit{Case 1: }$\min\{\posSch_j,y^{S_j}_-\}\le 0$\\}
In this case, we claim that
\begin{equation}
-\min\{\posSch_j,y^{S_j}_-\}\ge |\min\{0,y^{S_j}_-\}|\label{equation: Absolute Value Of Minimum}
\end{equation}
holds. This is clear for $\min\{0,y_-^{S_j}\}=0$ and for $\min\{\posSch_j,y_-^{S_j}\}=y_-^{S_j}$. In the remaining case, we have $\min\{0,y_-^{S_j}\}=y_-^{S_j}$ and $\min\{\posSch_j,y_-^{S_j}\}=\posSch_j$, i.e., $\posSch_j\le y_-^{S_j}\le 0$, which implies $-\posSch_j\ge -y_-^{S_j}=|y_-^{S_j}|$ as desired. This gives us
\begin{align*}
L(\timeSch_j,\posSch_j,\sigma_{S_j})&\myoverset{(\ref{equation: Schedule Length Approx Two Case One}),(\ref{equation: Schedule Length Approx Two Case Two})}{>}{35} y_+^{S_j}+|\posSch_j|+y-|\min\{0,y^{S_j}_-\}|-2\min\{\posSch_j,y^{S_j}_-\}\\
&\myoverset{(\ref{equation: Absolute Value Of Minimum})}{\ge}{35} y_+^{S_j}+|\posSch_j|+y+|\min\{0,y^{S_j}_-\}|,
\end{align*}
which is a contradiction to inequality (\ref{equation: First Approx For Schedule Length}).

\paragraph*{\textit{Case 2: }$\min\{\posSch_j,y^{S_j}_-\}> 0$\\}
The inequality $y^{S_j}_+\ge y^{S_j}_->0$ implies 
\begin{equation}
\max\{0,y^{S_j}_-\}+\min\{0,y^{S_j}_+\}=y^{S_j}_-.\label{equation: Sum Of Min And Max}
\end{equation}
Therefore, we can apply \Cref{lemma: Approx Schedule Only One Side} and the triangle inequality to obtain
\begin{align}
L(\timeSch_j,\posSch_j,\sigma_{S_j})&\myoverset{}{\le}{50} |\posSch_j-y^{S_j}_-|+L(\timeSch_j,y^{S_j}_-,\sigma_{S_j})\nonumber\\
&\myoverset{(\ref{equation: Sum Of Min And Max}), Lem \ref{lemma: Approx Schedule Only One Side}}{\le}{50} |\posSch_j-y^{S_j}_-|+y^{S_j}_+-y^{S_j}_-+y\nonumber\\
&\myoverset{}{=}{50} \max\{\posSch_j,y^{S_j}_-\}-\min\{\posSch_j,y^{S_j}_-\}+y^{S_j}_+-y^{S_j}_-+y.\label{equation: Schedule Length Third Approx}
\end{align}
We have
\begin{equation}
\max\{\posSch_j,y^{S_j}_-\}-\min\{\posSch_j,y^{S_j}_-\}-y^{S_j}_-= \posSch_j-2\min\{\posSch_j,y^{S_j}_-\}.\label{equation: Auxiliary For Schedule Length}
\end{equation}
This gives us
\begin{align}
L(\timeSch_j,\posSch_j,\sigma_{S_j})&\myoverset{(\ref{equation: Schedule Length Third Approx})}{\le}{15} \max\{\posSch_j,y^{S_j}_-\}-\min\{\posSch_j,y^{S_j}_-\}+y^{S_j}_+-y^{S_j}_-+y\nonumber\\
&\myoverset{(\ref{equation: Auxiliary For Schedule Length})}{=}{15} \posSch_j-2\min\{\posSch_j,y^{S_j}_-\}+y^{S_j}_++y.\label{equation: Schedule Length Final Approx}
\end{align}
Finally, we have
\begin{align*}
L(\timeSch_j,\posSch_j,\sigma_{S_j})&\myoverset{(\ref{equation: Schedule Length Approx Two Case Two})}{>}{15} y_+^{S_j}+|\posSch_j|+y-|\min\{0,y^{S_j}_-\}|-2\min\{\posSch_j,y^{S_j}_-\}\\
&\myoverset{}{=}{15} y_+^{S_j}+\posSch_j+y-2\min\{\posSch_j,y^{S_j}_-\},
\end{align*}
which is a contradiction to inequality (\ref{equation: Schedule Length Final Approx}).
We conclude that (\ref{equation: Contradiction For Schedule Length}) does not hold, which in turn proves (\ref{equation: Characterization Rightmost Point}) in the case that $\max\{\posSch_j,y^{S_j}_+\}=y^{S_j}_+$ holds.
\end{proof}
Now we can give an upper bound for the competitive ratio of \smartstart if the server is not waiting before starting the final schedule.

\begin{proposition}\label{proposition: Upper Bound No Waiting}
If $\smartstart$ does not wait before executing~$S_{N}$, we have
\begin{equation*}
\frac{\smartstart(\sigma)}{\opt(\sigma)} \le f_2(\Theta) := \left(\Theta+1-\frac{\Theta-1}{3\Theta+3}\right).
\end{equation*}
\end{proposition}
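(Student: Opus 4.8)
The plan is to reduce to the cases $N\ge 2$ and $|x_-|\le x_+$ and then to bound $\timeSch_{N-1}$, $L_{N-1}:=L(\timeSch_{N-1},\posSch_{N-1},\sigma_{S_{N-1}})$ and $L_N:=L(\timeSch_N,\posSch_N,\sigma_{S_N})$ separately, exploiting that no waiting occurs before $S_N$ together with the geometric estimates \Cref{lemma: Approx Schedule From Zero,lemma: Approx Schedule Only One Side,lemma: Rightmost Position}. The case $N=1$ is immediate: then $\timeSch_1$ is the arrival time of the first request (no further waiting occurs), hence $\timeSch_1\le\opt(\sigma)$, while exiting the \texttt{while}-loop gives $L(\timeSch_1,0,\sigma_{S_1})<(\Theta-1)\timeSch_1$, so $\smartstart(\sigma)=\timeSch_1+L(\timeSch_1,0,\sigma_{S_1})<\Theta\,\opt(\sigma)\le f_2(\Theta)\,\opt(\sigma)$ since $f_2(\Theta)>\Theta$. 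For the main case I reflect the line if necessary so that $|x_-|\le x_+$ and write $\opt(\sigma)=|x_-|+x_++y$; since $\opt$ must cross the origin region twice, $\opt(\sigma)\ge 2|x_-|+x_+$, hence $y\ge|x_-|\ge 0$, exactly the inequality underlying \Cref{lemma: Rightmost Position}.

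Assume now $N\ge 2$. Not waiting before $S_N$ gives, via \eqref{equation: Starting Time Wait}, $\timeSch_N=\timeSch_{N-1}+L_{N-1}$, and, since the \texttt{while}-loop is not entered at the start of iteration $N$, also $L_N<(\Theta-1)\timeSch_N$. Furthermore every request served by $S_N$ is released strictly after $\timeSch_{N-1}$ --- otherwise it would already have been served by $S_{N-1}$, which serves \emph{all} requests present at time $\timeSch_{N-1}$ --- so, as $\opt$ must serve these requests, $\opt(\sigma)>\timeSch_{N-1}$. Finally, \smartstart never starts $S_{N-1}$ before time $L_{N-1}/(\Theta-1)$, hence $L_{N-1}\le(\Theta-1)\timeSch_{N-1}$. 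Combining \eqref{equation: Costs Smartstart} with $\timeSch_N=\timeSch_{N-1}+L_{N-1}$ gives
\begin{equation*}
\smartstart(\sigma)=\timeSch_{N-1}+L_{N-1}+L_N,
\end{equation*}
and it remains to bound this by $f_2(\Theta)\,\opt(\sigma)$.

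The geometric crux is that $\posSch_N$ is simultaneously the endpoint of $S_{N-1}$ and the starting point of $S_N$, so the two schedules cannot both squander distance in the same direction. I would bound $L_{N-1}$ either by \Cref{lemma: Costs per Schedule} or by the triangle inequality together with \Cref{lemma: Approx Schedule From Zero}; bound $L_N$ by the triangle inequality together with \Cref{lemma: Approx Schedule From Zero}, or by \Cref{lemma: Approx Schedule Only One Side}; and control the position $\posSch_N$ through \Cref{lemma: Rightmost Position} applied to $S_{N-1}$, which for $\posSch_N\ge 0$ gives $\posSch_N\le|\posSch_{N-1}|+|\posSch_{N-1}-\posSch_N|+y-|\min\{0,y_-^{S_{N-1}}\}|\le|\posSch_{N-1}|+L_{N-1}+y$, while $|\posSch_N|\le|x_-|\le y$ when $\posSch_N<0$. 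I would additionally use $|\posSch_{N-1}|\le\timeSch_{N-1}$ and, where helpful, \Cref{lemma: Lower Bound Starting Time} in the form $\timeSch_{N-1}\ge|\posSch_N|/\Theta$.

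Substituting these estimates into $\smartstart(\sigma)=\timeSch_{N-1}+L_{N-1}+L_N$ and applying $\timeSch_{N-1}\le\opt(\sigma)$, $L_{N-1}\le(\Theta-1)\timeSch_{N-1}$, $L_{N-1}\le(1+\tfrac{\Theta}{\Theta+2})\opt(\sigma)$ and $L_N<(\Theta-1)\timeSch_N$, the claim reduces to a maximization over the remaining free parameters --- $|\posSch_{N-1}|$, $|\posSch_N|$, the extreme positions $y_-^{S_{N-1}},y_+^{S_{N-1}},y_-^{S_N},y_+^{S_N}$, and $y$ --- subject to all positions lying in $[x_-,x_+]$, to $y\ge|x_-|$, and to the coupling provided by \Cref{lemma: Rightmost Position}. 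I expect this final optimization to be the main obstacle: one must split into sub-cases according to the signs of $\posSch_N,y_-^{S_N},y_+^{S_N}$ and according to whether $\sigma_{S_N}$ lies entirely on one side of the origin, and in each sub-case choose the matching pair of geometric estimates so that the extremal configuration is exactly the one producing $\Theta+1-\tfrac{\Theta-1}{3\Theta+3}$. As in \Cref{lemma: Costs per Schedule}, whose worst case occurs at $|\posSch_j|=\tfrac{\Theta}{\Theta+2}\opt(\sigma)$, I expect the correction term $\tfrac{\Theta-1}{3\Theta+3}$ to appear as the value of a quadratic at its vertex, arising from the tradeoff between how far to the right $S_N$ starts (paid for within $\timeSch_N$) and how much of the line it must still traverse (paid for within $L_N$, itself limited by $L_N<(\Theta-1)\timeSch_N$).
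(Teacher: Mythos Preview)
Your setup is sound and matches the paper: the reduction to $N\ge 2$, the symmetry assumption $|x_-|\le x_+$, the decomposition $\smartstart(\sigma)=\timeSch_{N-1}+L_{N-1}+L_N$, and the bounds $\timeSch_{N-1}<\opt(\sigma)$ and $L_{N-1}\le(\Theta-1)\timeSch_{N-1}$ are all exactly right. However, there is a genuine gap in the plan for bounding $L_N$.

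The missing idea is that the paper does \emph{not} bound $L_N$ purely through the geometric Lemmas~3.3--3.5. Instead it introduces the first request $\sigma^{\opt}_{S_N}$ of $\sigma_{S_N}$ that \opt collects, with starting position $\posReq^\opt_N$ and release time $\timeReq^\opt_N>\timeSch_{N-1}$, and observes
\[
L_N \le |\posReq^\opt_N-\posSch_N| + L(\timeReq^\opt_N,\posReq^\opt_N,\sigma_{S_N}) \le |\posReq^\opt_N-\posSch_N| + \opt(\sigma) - \timeReq^\opt_N.
\]
This is what produces the crucial cancellation $\timeSch_{N-1}+L_N < |\posReq^\opt_N-\posSch_N|+\opt(\sigma)$, yielding $\smartstart(\sigma)<L_{N-1}+|\posReq^\opt_N-\posSch_N|+\opt(\sigma)$. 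The geometric lemmas alone give only $L_N\le|\posSch_N|+\opt(\sigma)$ or $L_N\le\frac{2\Theta+2}{\Theta+2}\opt(\sigma)$; combined with $\timeSch_{N-1}+L_{N-1}\le\Theta\timeSch_{N-1}<\Theta\opt(\sigma)$ this yields at best $(\Theta+\frac{2\Theta+2}{\Theta+2})\opt(\sigma)$, which for $\Theta\approx 2.05$ is about $3.54$---strictly worse than $f_2(\Theta)\approx 2.94$. There is no free-parameter optimisation over the $y^{S_j}_{\pm}$ that recovers the gap, because no purely positional estimate sees the release-time cancellation.

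The second missing idea is the case split on \opt's order: whether \opt delivers the request whose destination is $\posSch_N$ before or after collecting $\sigma^{\opt}_{S_N}$. In the former case one gets $|\posReq^\opt_N-\posSch_N|<\opt(\sigma)-\timeSch_{N-1}$ directly and finishes with \Cref{lemma: Costs per Schedule}. In the latter case one first shows $\smartstart(\sigma)<\Theta\opt(\sigma)+|\posReq^\opt_N-\posSch_N|$; if $|\posReq^\opt_N-\posSch_N|$ is small the claim is immediate, and otherwise the largeness of $|\posReq^\opt_N-\posSch_N|$ simultaneously forces $y$, $|\posSch_N|$, and $\timeSch_{N-2}$ to all be at most $\frac{\Theta-1}{3\Theta+3}\opt(\sigma)$---only \emph{then} do Lemmas~3.3 and~3.5 (applied to $S_{N-1}$, not $S_N$) close the argument. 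So the constant $\frac{\Theta-1}{3\Theta+3}$ is not the vertex of a quadratic tradeoff as you surmise; it is a threshold chosen to equalise two linear sub-cases, with the factor $3$ coming from the three quantities $y,|\posSch_N|,\timeSch_{N-2}$ that get bounded uniformly.
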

\begin{proof}
Assume algorithm $\smartstart$ does not have a waiting period before the last schedule, i.e., $\smartstart$ starts the final schedule $S_N$ immediately after finishing $S_{N-1}$. Without loss of generality, we assume $|x_-|\le x_+$ throughout the entire proof by symmetry.

First of all, we notice that we may assume that $\smartstart$ executes at least two schedules in this case. Otherwise either the only schedule has length $0$, which would imply~$\optS=\smartstartS=0$, or the only schedule would have a positive length, implying a waiting period. Let~$\sigma^{\opt}_{S_N}$ be the first request of~$\sigma_{S_N}$ that is served by $\opt$ and let~$\posReq^\opt_N$ be its starting point and~$\timeReq^\opt_N$ be its release time. We have
\begin{align}
\smartstartS &\myoverset{(\ref{equation: Costs Smartstart})}{=}{45}\timeSch_N+L(\timeSch_N,\posSch_N,\sigma_{S_N})\nonumber\\
&\myoverset{(\ref{equation: Starting Time Wait})}{=}{45}\timeSch_{N-1}+L(\timeSch_{N-1},\posSch_{N-1},\sigma_{S_{N-1}})+L(\timeSch_N,\posSch_N,\sigma_{S_N})\nonumber\\
&\myoverset{$\timeSch_N\ge \timeReq^\opt_N$}{\le}{45} \timeSch_{N-1}+L(\timeSch_{N-1},\posSch_{N-1},\sigma_{S_{N-1}})+L(\timeReq^\opt_N,\posSch_N,\sigma_{S_N}).\label{equation: First Approx Upper Bound No Wait}
\end{align}
Since $\opt$ serves all requests of $\sigma_{S_N}$ after time $\timeReq^\opt_N$, starting with a request with starting point~$\posReq^\opt_N$, we also have
\begin{equation}
\optS\ge \timeReq^\opt_N+L(\timeReq^\opt_N,\posReq^\opt_N,\sigma_{S_N}).\label{equation: First Approx Opt No Wait}
\end{equation}
Furthermore, we have
\begin{equation}
\timeReq^\opt_N>\timeSch_{N-1}\label{equation: Time First Request Last Schedule Opt}
\end{equation}
since otherwise $\sigma^{\opt}_{S_N}\in\sigma_{S_{N-1}}$ would hold. This gives us
\begin{align}
\smartstartS &\myoverset{(\ref{equation: First Approx Upper Bound No Wait})}{\le}{15}\timeSch_{N-1}+L(\timeSch_{N-1},\posSch_{N-1},\sigma_{S_{N-1}})+L(\timeReq^\opt_N,\posSch_N,\sigma_{S_N})\nonumber\\
&\myoverset{(\ref{equation: Schedule Triangule Eq})}{\le}{15}\timeSch_{N-1}+L(\timeSch_{N-1},\posSch_{N-1},\sigma_{S_{N-1}})\nonumber\\
&\qquad +|\posReq^\opt_N-\posSch_N|+L(\timeReq^\opt_N,\posReq^\opt_N,\sigma_{S_N})\nonumber\\
&\myoverset{(\ref{equation: First Approx Opt No Wait})}{\le}{15}\timeSch_{N-1}+L(\timeSch_{N-1},\posSch_{N-1},\sigma_{S_{N-1}})\nonumber\\
&\qquad +|\posReq^\opt_N-\posSch_N|+\optS-\timeReq^\opt_N\nonumber\\
&\myoverset{(\ref{equation: Time First Request Last Schedule Opt})}{<}{15}L(\timeSch_{N-1},\posSch_{N-1},\sigma_{S_{N-1}})+|\posReq^\opt_N-\posSch_N|+\optS.\label{equation: Second Approx Upper Bound No Wait}
\end{align}
We denote by $\sigma^{\textsc{Smart}}_{S_{N-1}}$ the last request that is delivered during schedule $S_{N-1}$ by $\smartstart$. Note that the destination of $\sigma^{\textsc{Smart}}_{S_{N-1}}$ is $\posSch_N$. We consider two cases.

\paragraph*{\textit{Case 1: }${\normalfont\opt}$\textit{ collects }$\sigma^{\normalfont\opt}_{S_N}$\textit{ before delivering } $\sigma^{\normalfont\textsc{Smart}}_{S_{N-1}}$\\}
Obviously $\opt$ cannot collect the request $\sigma^{\opt}_{S_N}$ before its release time $\timeReq^\opt_N$. Furthermore, since $\opt$ still has to go to position $\posSch_N$ for delivering request $\sigma^{\textsc{Smart}}_{S_{N-1}}$ after collecting $\sigma^{\opt}_{S_N}$, we have
\begin{equation}\label{equation: Approx Opt No Wait Case One}
\optS\ge \timeReq^\opt_N+|\posReq^\opt_N-\posSch_N| \overset{(\ref{equation: Time First Request Last Schedule Opt})}{>} \timeSch_{N-1}+|\posReq^\opt_N-\posSch_N|.
\end{equation}
The inequality above gives us
\begin{align}
\smartstartS &\myoverset{(\ref{equation: Second Approx Upper Bound No Wait})}{<}{15}L(\timeSch_{N-1},\posSch_{N-1},\sigma_{S_{N-1}})+|\posReq^\opt_N-\posSch_N|+\optS\nonumber\\
&\myoverset{(\ref{equation: Approx Opt No Wait Case One})}{<}{15}L(\timeSch_{N-1},\posSch_{N-1},\sigma_{S_{N-1}})+2\optS-\timeSch_{N-1}\label{equation: Third Approx Upper Bound No Wait}
\end{align}
By definition of $\smartstart$, we have
\begin{equation}
\timeSch_{N-1}\ge\frac{1}{\Theta-1}L(\timeSch_{N-1},\posSch_{N-1},\sigma_{S_{N-1}}).\label{equation: Approx Time Second To Final Schedule}
\end{equation}
This leads to
\begin{align*}
\smartstartS &\myoverset{(\ref{equation: Third Approx Upper Bound No Wait})}{<}{25} L(\timeSch_{N-1},\posSch_{N-1},\sigma_{S_{N-1}})+2\optS-\timeSch_{N-1}\\
&\myoverset{(\ref{equation: Approx Time Second To Final Schedule})}{\le}{25} L(\timeSch_{N-1},\posSch_{N-1},\sigma_{S_{N-1}})+2\optS\\
&\qquad -\frac{1}{\Theta-1}L(\timeSch_{N-1},\posSch_{N-1},\sigma_{S_{N-1}})\\
&\myoverset{}{=}{25} \frac{\Theta-2}{\Theta-1}L(\timeSch_{N-1},\posSch_{N-1},\sigma_{S_{N-1}})+2\optS\\
&\myoverset{Lem \ref{lemma: Costs per Schedule}}{\le}{25} \frac{\Theta-2}{\Theta-1}\left(1+\frac{\Theta}{\Theta+2}\right)\optS+2\optS\\
&\myoverset{}{=}{25}\frac{4\Theta^2-8}{\Theta^2+\Theta-2}\optS.
\end{align*}
It can be shown that
\begin{equation*}
\frac{4\Theta^2-8}{\Theta^2+\Theta-2}<\Theta+1-\frac{\Theta-1}{3\Theta+3}
\end{equation*}
holds for all $\Theta>1$, which concludes this case.

\paragraph*{\textit{Case 2: }${\normalfont\opt}$\textit{ delivers }$\sigma^{\normalfont\textsc{Smart}}_{S_{N-1}}$\textit{ before collecting the request }$\sigma^{\normalfont\opt}_{S_N}$\\}

In this case we notice that we have
\begin{align*}
\smartstartS &\myoverset{(\ref{equation: First Approx Upper Bound No Wait})}{\le}{50} \timeSch_{N-1}+L(\timeSch_{N-1},\posSch_{N-1},\sigma_{S_{N-1}})+L(\timeReq_N^\opt,\posSch_N,\sigma_{S_N})\\
&\myoverset{(\ref{equation: Approx Time Second To Final Schedule})}{\le}{50}\Theta \timeSch_{N-1}+L(\timeReq_N^\opt,\posSch_N,\sigma_{S_N})\\
&\myoverset{(\ref{equation: Schedule Time})}{\le}{50}\Theta \timeSch_{N-1}+|\posSch_N-\posReq_N^\opt|+L(\timeReq_N^\opt,\posReq_N^\opt,\sigma_{S_N})\\
&\myoverset{(\ref{equation: First Approx Opt No Wait})}{\le}{50} \Theta \timeSch_{N-1}+|\posSch_N-\posReq_N^\opt|+\optS-\timeReq_N^\opt\\
&\myoverset{(\ref{equation: Time First Request Last Schedule Opt})}{<}{50} (\Theta-1) \timeReq_N^\opt+|\posSch_N-\posReq_N^\opt|+\optS\\
&\myoverset{$\timeReq_N^\opt\le\optS$}{\le}{50} \Theta\optS+|\posSch_N-\posReq_N^\opt|.
\end{align*}
This means the claim is shown if, we have
\begin{equation*}
|\posSch_N-\posReq_N^\opt|< \optS-\frac{\Theta-1}{3\Theta+3}\optS.
\end{equation*}
Therefore, we may assume in the following that
\begin{equation}
|\posSch_N-\posReq_N^\opt|\ge \optS-\frac{\Theta-1}{3\Theta+3}\optS.\label{equation: Distance Of Starting Points Assumption}
\end{equation}
Let $\optS=|x_-|+x_++y$ for some $y\ge 0$. By definition of $x_-$ and $x_+$ we have
\begin{equation}
|\posSch_N-\posReq_N^\opt|+y\le \optS.\label{equation: Distance Of Starting Points Approx One}
\end{equation}
Since by assumption $\opt$ delivers $\sigma^{\textsc{Smart}}_{S_{N-1}}$ to position~$\posSch_N$ before collecting $\sigma^{\opt}_{S_N}$ at position $\posReq^\opt_N$, we have
\begin{equation}
|\posSch_N-\posReq^\opt_N|+|\posSch_N|\le \optS ,\label{equation: Distance Of Starting Points Approx Two}
\end{equation}
and since $\sigma^{\textsc{Smart}}_{S_{N-1}}$ appears after time $\timeSch_{N-2}$, we also have
\begin{equation}
|\posSch_N-\posReq^\opt_N|+\timeSch_{N-2}< \optS. \label{equation: Distance Of Starting Points Approx Three}
\end{equation}
To sum it up, we may assume that
\begin{equation}
\max\{y,|\posSch_N|,\timeSch_{N-2}\}\overset{(\ref{equation: Distance Of Starting Points Assumption}),(\ref{equation: Distance Of Starting Points Approx One}),(\ref{equation: Distance Of Starting Points Approx Two}),(\ref{equation: Distance Of Starting Points Approx Three})}\le\frac{\Theta-1}{3\Theta+3}\optS\label{equation: Central Approx Upper Bound No Wait}
\end{equation}
holds. In the following, denote by $y^{S_{N-1}}_-$ the leftmost starting or ending point and by $y^{S_{N-1}}_+$ the rightmost starting or ending point of the requests in $\sigma_{S_{N-1}}$. We compute
\begin{align}
\smartstartS &\myoverset{(\ref{equation: Second Approx Upper Bound No Wait})}{<}{22} L(\timeSch_{N-1},\posSch_{N-1},\sigma_{S_{N-1}})+|\posSch_N-\posReq_N^\opt|+\optS\nonumber\\
&\myoverset{(\ref{equation: Distance Of Starting Points Approx Three})}{<}{22} L(\timeSch_{N-1},\posSch_{N-1},\sigma_{S_{N-1}})+2\optS-\timeSch_{N-2}\nonumber\\
&\myoverset{(\ref{equation: Schedule Time})}{\le}{22} |\posSch_{N-1}|+L(\timeSch_{N-1},0,\sigma_{S_{N-1}})+2\optS-\timeSch_{N-2}\nonumber\\
&\myoverset{Lem \ref{lemma: Lower Bound Starting Time}}{\le}{22} (\Theta-1) \timeSch_{N-2}+L(\timeSch_{N-1},0,\sigma_{S_{N-1}})+2\optS\nonumber\\
&\myoverset{Lem \ref{lemma: Approx Schedule From Zero}}{\le}{22} (\Theta-1) \timeSch_{N-2}+\max\{0,|y^{S_{N-1}}_-|\}+\max\{0,y^{S_{N-1}}_+\}\nonumber\\
&\qquad +y+2\optS.\label{equation: Fourth Approx Upper Bound No Wait}
\end{align}
Obviously, position $y^{S_{N-1}}_+$ is visited by $\smartstart$ in schedule $S_{N-1}$. Therefore, $y^{S_{N-1}}_+$ is smaller than or equal to the rightmost point that is visited by $\smartstart$ during schedule~$S_{N-1}$, which gives us
\begin{equation}
y^{S_{N-1}}_+\overset{\text{Lem }\ref{lemma: Rightmost Position}}{\le} |\posSch_{N-1}|+|\posSch_{N-1}-\posSch_N|+y-\max\{0,|y^{S_{N-1}}_-|\}.\label{equation: Upper Bound Rightmost Point}
\end{equation}
On the other hand, because of $|x_-|\le x_+$, we have $\optS\ge 2|x_-|+x_+$, which implies $y\ge |x_-|$. By definition of $x_-$ and $y^{S_{N-1}}_-$, we have $|x_-|\ge \max\{0,|y^{S_{N-1}}_-|\}$. This gives us~$y\ge\max\{0,|y^{S_{N-1}}_-|\}$ and
\begin{equation}
0\le |\posSch_{N-1}|+|\posSch_{N-1}-\posSch_N|+y-\max\{0,|y^{S_{N-1}}_-|\}.\label{equation: Approx Distance Starting Points Last Schedules}
\end{equation}
To sum it up, we have
\begin{equation}
\max\{0,y^{S_{N-1}}_+\}\overset{(\ref{equation: Upper Bound Rightmost Point}),(\ref{equation: Approx Distance Starting Points Last Schedules})}{\le} |\posSch_{N-1}|+|\posSch_{N-1}-\posSch_N|+y-\max\{0,|y^{S_{N-1}}_-|\}.\label{equation: Upper Bound Rightmost Point Or Zero}
\end{equation}
The inequality above gives us
\begin{align*}
\smartstartS &\myoverset{(\ref{equation: Fourth Approx Upper Bound No Wait})}{<}{24} (\Theta-1) \timeSch_{N-2}+\max\{0,|y^{S_{N-1}}_-|\}+\max\{0,y^{S_{N-1}}_+\}\nonumber\\
&\qquad +y+2\optS\\
&\myoverset{(\ref{equation: Upper Bound Rightmost Point Or Zero})}{\le}{24} (\Theta-1) \timeSch_{N-2}+|\posSch_{N-1}|+|\posSch_{N-1}-\posSch_N|+2y+2\optS\\
&\myoverset{}{\le}{24} (\Theta-1) \timeSch_{N-2}+|\posSch_{N-1}|+|\posSch_{N-1}|+|\posSch_N|+2y+2\optS\\
&\myoverset{Lem \ref{lemma: Lower Bound Starting Time}}{\le}{24} (\Theta-1) \timeSch_{N-2}+2\Theta \timeSch_{N-2}+|\posSch_N|+2y+2\optS\\
&\myoverset{(\ref{equation: Central Approx Upper Bound No Wait})}{\le}{24} (3\Theta+2)\frac{\Theta-1}{3\Theta+3}\optS+2\optS\\
&\myoverset{}{=}{24} \left(\Theta+1-\frac{\Theta-1}{3\Theta+3}\right)\optS.\qedhere
\end{align*}
\end{proof}

We combine the results of \Cref{proposition: Upper Bound Waiting} and \Cref{proposition: Upper Bound No Waiting} to obtain the main result of this section.

\begin{theorem}\label{theorem: General Upper Bound}
Let $\Theta^*$ be the only positive, real solution of $f_1(\Theta) = f_2(\Theta)$, i.e.,
\begin{equation*}
\Theta^*+1-\frac{\Theta^*-1}{3\Theta^*+3} = \frac{2\Theta^{*2}+2\Theta^*}{\Theta^{*2}+\Theta^*-2}.
\end{equation*}
Then, $\smartstart_{\Theta^*}$ is $\rho^*$-competitive with $\rho^* := f_1(\Theta^*) = f_2(\Theta^*) \approx 2.93768$.
\end{theorem}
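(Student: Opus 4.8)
The plan is to merge the two case analyses of \Cref{proposition: Upper Bound Waiting} and \Cref{proposition: Upper Bound No Waiting} into a single parametric bound, and then read off the optimal parameter. On any instance $\sigma$, a run of $\smartstart_\Theta$ either has a waiting period before its last schedule $S_N$ --- so \Cref{proposition: Upper Bound Waiting} applies and $\smartstart(\sigma) \le f_1(\Theta)\,\opt(\sigma)$ --- or it starts $S_N$ immediately, in which case \Cref{proposition: Upper Bound No Waiting} applies and $\smartstart(\sigma) \le f_2(\Theta)\,\opt(\sigma)$ (the empty instance being trivial). These two cases are exhaustive, so for every $\Theta > 1$ the algorithm $\smartstart_\Theta$ is $\max\{f_1(\Theta), f_2(\Theta)\}$-competitive. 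It then suffices to evaluate this bound at $\Theta = \Theta^*$, where $f_1(\Theta^*) = f_2(\Theta^*) = \rho^*$, so that $\smartstart_{\Theta^*}$ is $\rho^*$-competitive.

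To make the statement well posed I would first verify that $f_1 = f_2$ has a unique positive real solution and locate it. Rewriting $f_1(\Theta) = 2 + \frac{4}{\Theta^2+\Theta-2}$ shows $f_1$ is strictly decreasing on $(1,\infty)$ with $f_1(\Theta)\to\infty$ as $\Theta\to 1^+$ and $f_1(\Theta)\to 2$ as $\Theta\to\infty$; rewriting $f_2(\Theta) = \Theta + \tfrac23 + \tfrac{2}{3(\Theta+1)}$ shows $f_2$ is strictly increasing on $(1,\infty)$ (its derivative $1 - \tfrac{2}{3(\Theta+1)^2}$ is positive there). Hence $f_1 - f_2$ is strictly decreasing and runs from $+\infty$ to $-\infty$, so there is exactly one $\Theta^* \in (1,\infty)$ with $f_1(\Theta^*) = f_2(\Theta^*)$; moreover $\max\{f_1,f_2\}$ equals $f_1$ to the left of $\Theta^*$ and $f_2$ to the right, so its minimum over $(1,\infty)$ is attained precisely at $\Theta^*$ with value $\rho^*$ (matching the figure's claim that $\Theta^*$ is the best parameter choice).

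Finally I would carry out the elimination to get explicit numbers. Clearing the denominators $\Theta+2$, $\Theta-1$, $3\Theta+3$ (all nonzero and positive for $\Theta>1$) in $f_1(\Theta) = f_2(\Theta)$ reduces it to the quartic $3\Theta^4 + 2\Theta^3 - 9\Theta^2 - 12\Theta - 8 = 0$; its coefficient sign pattern $(+,+,-,-,-)$ has a single sign change, so by Descartes' rule of signs it has exactly one positive real root, confirming uniqueness, and evaluating the left-hand side at $\Theta = 2$ (value $-4$) and at $\Theta = 2.1$ (positive) localizes $\Theta^* \approx 2.0526$, whence $\rho^* = f_1(\Theta^*) \approx 2.93768$. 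None of these steps is a serious obstacle; the only points requiring attention are that the two propositions genuinely cover every run of $\smartstart$, so that the combined bound $\max\{f_1,f_2\}$ needs no further hypothesis, and that the algebraic elimination to the quartic is performed without slips.
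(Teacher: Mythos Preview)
Your proposal is correct and follows essentially the same approach as the paper's proof: combine the two propositions into the bound $\max\{f_1(\Theta),f_2(\Theta)\}$, use the monotonicity of $f_1$ (decreasing) and $f_2$ (increasing) on $(1,\infty)$ to locate the unique intersection $\Theta^*$, and read off $\rho^*$. You add more explicit justification than the paper does---the rewritten forms of $f_1,f_2$, the quartic $3\Theta^4+2\Theta^3-9\Theta^2-12\Theta-8=0$, and Descartes' rule---but these are elaborations of the same argument rather than a different route.
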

\begin{proof}
For the case, where $\smartstart$ does wait before starting the final schedule, we have established the upper bound
\begin{equation*}
\frac{\smartstartS}{\optS}\le\frac{2\Theta^2+2\Theta}{\Theta^2+\Theta-2}=f_1(\Theta)
\end{equation*}
in \Cref{proposition: Upper Bound Waiting} and for the case, where $\smartstart$ starts the final schedule immediately after the second to final one, we have established the upper bound
\begin{equation*}
\frac{\smartstartS}{\optS}\le\Theta+1-\frac{\Theta-1}{3\Theta+3}=f_2(\Theta)
\end{equation*}
in \Cref{proposition: Upper Bound No Waiting}. Therefore the parameter for $\smartstart$ with the smallest upper bound~is
\begin{equation*}
\Theta^*=\argmin_{\Theta>1}\left\{\max\{f_1(\Theta),f_2(\Theta)\}\right\}.
\end{equation*}
 We note that $f_1$ is strictly decreasing for $\Theta>1$ and that $f_2$ is strictly increasing for $\Theta>1$. Therefore the minimum above lies at the intersection point of $f_1$ and $f_2$ that is larger than $1$, i.e., $\Theta^*$ is the only positive, real solution of
\begin{equation*}
\Theta+1-\frac{\Theta-1}{3\Theta+3}=\frac{2\Theta^2+2\Theta}{\Theta^2+\Theta-2}.
\end{equation*}
The resulting upper bound for the competitive ratio is
\begin{equation*}
\rho^*=f_1(\Theta^*)=f_2(\Theta^*)\approx 2.93768.\qedhere
\end{equation*}
\end{proof}

\section{Lower Bound for the Open Version}\label{section: Lower Bound for the Open Version}

In this section, we explicitly construct instances that demonstrate that the upper bounds given in the previous section are tight for certain ranges of $\Theta>1$, in particular for~$\Theta = \Theta^*$ (as in \Cref{theorem: General Upper Bound}). 
Further, we show that choices of $\Theta > 1$ different from $\Theta^*$ yield competitive ratios worse than~$\rho^*\approx 2.94$.
Together, this implies that $\rho^*$ is exactly the best possible competitive ratio for \smartstart.

All our lower bounds rely on the following lemma that gives a way to lure \smartstart away from the origin, with almost no time overhead.
More specifically, the lemma provides a way to make \smartstart move to any position~$p>0$ within time~$p+\mu$, where $\mu>0$ is arbitrarily small.

\begin{lemma}\label{lemma: Luring}
Let the capacity $c\in\N\cup\{\infty\}$ of the server be arbitrary but fixed, $p>0$ be any position on the real line and $\mu>0$ be any positive number. Furthermore, let $\delta>0$ be such that~$\frac{p}{\delta\Theta}=n\in\N$ and~$\delta<(\Theta-1)\mu$. Algorithm $\smartstart$ finishes serving the set of requests~$\sigma=\{\sigma_1,\dots,\sigma_{n+1}\}$ with
\begin{align*}
\sigma_1&=(\delta,\delta;0),\\
\sigma_i&=\left(i\delta,i\delta;\frac{1}{\Theta-1}\delta +(i-1)\delta\right)\text{ for }i\in\{2,\dots,n\}\\
\sigma_{n+1}&=\left(p,p;\mu +n\delta\right)
\end{align*}
and reaches the position $p$ at time $p+\mu$, provided that no additional requests appear until time~$\frac{p}{\Theta}+\mu$.
\end{lemma}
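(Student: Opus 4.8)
The plan is to track \smartstart's behavior step by step on the constructed instance, showing inductively that it processes the requests $\sigma_1,\dots,\sigma_n$ essentially ``in place'' near the origin (each as its own trivial schedule of length~$0$), and then, when $\sigma_{n+1}$ appears, immediately starts one final schedule that simply walks to~$p$. The key quantitative facts I would need are: (i)~each request $\sigma_i$ for $i\le n$ sits at position~$i\delta$ and has release time chosen so that \smartstart, sitting idle at $(i-1)\delta$ after serving $\sigma_{i-1}$, has to wait until exactly time $\frac{1}{\Theta-1}\delta + (i-1)\delta$ before the waiting condition $t \le L(t,\posSch,R_t)/(\Theta-1)$ fails; and (ii)~once $\sigma_{n+1}$ arrives, the accumulated time is $n\delta + \mu$, which is large enough (relative to the length~$L$ of the schedule serving the remaining requests, which is roughly $p - n\delta$) that the waiting loop is skipped entirely and \smartstart departs at once.

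First I would set up the induction on $j=1,\dots,n$ with the claim: \smartstart's $j$-th schedule $S_j$ serves exactly $\{\sigma_j\}$, starts at position $\posSch_j = (j-1)\delta$ and at time $\timeSch_j = \frac{1}{\Theta-1}\delta + (j-1)\delta$ (with $\timeSch_1 = \frac{1}{\Theta-1}\delta$, $\posSch_1 = 0$ as the base case), has length $L(\timeSch_j,\posSch_j,\{\sigma_j\}) = \delta$ (go from $(j-1)\delta$ to $j\delta$, collect and immediately deliver), and ends at $\posSch_{j+1} = j\delta$ at time $\timeSch_j + \delta = \frac{1}{\Theta-1}\delta + j\delta$. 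The base case and inductive step both amount to checking that at the moment \smartstart becomes idle at $(j-1)\delta$, the only unserved request available is $\sigma_j$ (this uses the release-time ordering: $\sigma_{j+1}$ only appears at time $\frac{1}{\Theta-1}\delta + j\delta$, which is exactly when $S_j$ ends, and since $S_j$ ends at position $j\delta = $ the location of $\sigma_{j+1}$, the timing lines up cleanly), and that the waiting loop $t \le L(t,(j-1)\delta,\{\sigma_j\})/(\Theta-1) = \delta/(\Theta-1)$ forces $\timeSch_j = \delta/(\Theta-1)$ — wait, more carefully, after serving $S_{j-1}$ at time $\frac{1}{\Theta-1}\delta + (j-1)\delta$ the server is at $(j-1)\delta$ and $\sigma_j$ is already present (released at $\frac{1}{\Theta-1}\delta + (j-1)\delta$), so actually \smartstart does not wait at all between consecutive small requests once $j\ge 2$; only the first one incurs the $\delta/(\Theta-1)$ wait. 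I'd state this precisely and verify it.

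Then I would handle the final step: after $S_n$ ends at position $n\delta = p/\Theta$ at time $\frac{1}{\Theta-1}\delta + n\delta$, the server idles. Request $\sigma_{n+1}=(p,p;\mu+n\delta)$ appears at time $\mu + n\delta$. Since $\delta < (\Theta-1)\mu$ gives $\frac{1}{\Theta-1}\delta + n\delta < \mu + n\delta$, the server is still idle and waiting when $\sigma_{n+1}$ arrives. At that time the only unserved request is $\sigma_{n+1}$, the server is at $p/\Theta$, and $L(\mu+n\delta,\, p/\Theta,\, \{\sigma_{n+1}\}) = p - p/\Theta = p(\Theta-1)/\Theta$. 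The waiting condition checks whether $t \le L/(\Theta-1) = p/\Theta$; but $t = \mu + n\delta = \mu + p/\Theta > p/\Theta$, so the condition fails and \smartstart departs immediately. It then travels distance $p - p/\Theta$, arriving at $p$ at time $(\mu + p/\Theta) + (p - p/\Theta) = p + \mu$, as claimed. The hypothesis ``no additional requests appear until time $\frac{p}{\Theta}+\mu$'' is exactly what guarantees the idle period is not interrupted and that $\sigma_{n+1}$ is genuinely the next schedule's content.

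\textbf{The main obstacle} I anticipate is the bookkeeping at the boundaries between schedules: one must argue that \smartstart treats each $\sigma_i$ as a \emph{separate} schedule rather than, say, lumping several of them together — this relies on the fact that at the instant $S_{i-1}$ finishes, request $\sigma_i$ has \emph{just} become available (not earlier), so it could not have been included in $S_{i-1}$, and that \smartstart's optimal schedule for a single point-request of the form $(q,q;r)$ starting at the adjacent point has length exactly the distance, with no slack. A secondary subtlety is confirming that the optimal offline schedule computed in line~6 of \Cref{algorithm: Smartstart} is indeed the ``walk right, collect, deliver'' route and not something shorter — here it is, since every request has coincident source and target and all lie to the right of the server's current position, so any valid route must reach $i\delta$ (resp.\ $p$), and going straight there achieves the minimum. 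Once these structural points are nailed down, the remaining arithmetic (verifying the two inequalities $\frac{\delta}{\Theta-1} + n\delta < \mu + n\delta$ and $\mu + \frac{p}{\Theta} > \frac{p}{\Theta}$, and the final arrival-time computation) is immediate from the hypotheses $\frac{p}{\delta\Theta} = n$ and $\delta < (\Theta-1)\mu$.
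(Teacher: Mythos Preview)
Your proposal is correct and follows essentially the same induction as the paper: verify that each $\sigma_j$ ($j\le n$) is served in its own schedule $S_j$ of length~$\delta$ starting at time $\frac{\delta}{\Theta-1}+(j-1)\delta$ from position $(j-1)\delta$, then handle $\sigma_{n+1}$ exactly as you describe. Two small slips to fix when you write it up: the schedules have length~$\delta$, not~$0$ (you catch this yourself later), and $S_j$ ends at position $j\delta$, which is \emph{not} the location of $\sigma_{j+1}$ (that sits at $(j{+}1)\delta$); what matters is only that $\sigma_{j+1}$'s release time equals the completion time of $S_j$, so it is excluded from $S_j$ but available for $S_{j+1}$.
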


\begin{proof}
We show via induction that every request $\sigma_i$ with $i\in\{1,\dots,n\}$ is served in a separate schedule $S_i$ with starting position $\posSch_i=(i-1)\delta$ and starting time
\begin{equation*}
\timeSch_i=\frac{1}{\Theta-1}\delta +(i-1)\delta.
\end{equation*}
This is clear for $i=1$: By definition, $\smartstart$ starts from $\posSch_1=0$. The schedule~$S_1$ to serve $\sigma_1$ is started at time
\begin{equation*}
\timeSch_1=\min\left\{t\ge 0\;\middle|\; \frac{L(\timeSch,0,\{\sigma_1\})}{\Theta-1}\le t\right\}=\frac{1}{\Theta-1}\delta,
\end{equation*}
and reaches position $\delta$ at time $\frac{1}{\Theta-1}\delta+\delta=\frac{\Theta}{\Theta-1}\delta$. 
Note that the release time of every request~$\sigma_i$ is larger than $\timeSch_1$, ensuring that $S_1$ indeed only serves $\sigma_1$. 

We assume the claim is true for some $k\in\{1,\dots,n-1\}$. Consider $i=k+1$. By reduction, the server finishes schedule $S_k$ at position~$\posSch_{k+1}=k\delta$ at time~$\frac{1}{\Theta-1}\delta +k\delta$. Therefore, we have
\begin{equation*}
\timeSch_{k+1}\ge\frac{1}{\Theta-1}\delta+k\delta.
\end{equation*}
On the other hand, we have
\begin{equation*}
\frac{L\left(\frac{\delta}{\Theta-1} +k\delta,k\delta,\{\sigma_{k+1}\}\right)}{\Theta-1}=\frac{\delta}{\Theta-1}<\frac{1}{\Theta-1}\delta+k\delta.
\end{equation*}
Since there are no other unserved requests at time $\frac{1}{\Theta-1}\delta +k\delta$, the schedule $S_{k+1}$ is started at time~$\timeSch_{k+1}=\frac{1}{\Theta-1}\delta +k\delta$ and only serves~$\sigma_{k+1}$ as claimed.
It remains to examine the final request $\sigma_{n+1}$. The above shows that in the schedule $S_n$ is finished at time
\begin{equation*}
\timeSch_n+L(\timeSch_n,\posSch_n,\{\sigma_n\})=\frac{1}{\Theta-1}\delta +(n-1)\delta+\delta=\frac{1}{\Theta-1}\delta +n\delta<\mu+n\delta
\end{equation*}
at position $n\delta=\frac{p}{\Theta}$, i.e., before the request $\sigma_{n+1}$ is released at time $\mu+n\delta$. On the other hand, we have
\begin{equation*}
\frac{L\left(\mu +n\delta,\frac{p}{\Theta},\{\sigma_{n+1}\}\right)}{\Theta-1}=\frac{\frac{\Theta-1}{\Theta}p}{\Theta-1}=\frac{p}{\Theta}=n\delta<\mu+n\delta.
\end{equation*}
Therefore the final schedule $S_{n+1}$ is started at time~$\timeSch_{n+1}=\mu +n\delta=\mu +\frac{p}{\Theta}$, and we get
\begin{align*}
\smartstart((\sigma_i)_{i\in\{1,\dots,n+1\}})&=\timeSch_{n+1}+L(\timeSch_{n+1},\posSch_{n+1},\{\sigma_{n+1}\})\\
&=\mu +\frac{p}{\Theta}+\frac{\Theta-1}{\Theta}p\\
&=\mu +p.
\end{align*}
Note that for every request the starting point is identical to the ending point. 
Thus, our construction remains valid for every capacity $c\in\N\cup\{\infty\}$. 
Furthermore, there is no interference with requests that are released after time $\timeSch_{n+1}=\mu +\frac{p}{\Theta}$.
\end{proof}

Equipped with this strategy to lure $\smartstart$ away from the origin, we now move on to establish lower bounds matching Propositions~\ref{proposition: Upper Bound Waiting} and~\ref{proposition: Upper Bound No Waiting}.

\begin{proposition}\label{proposition: Lower Bound Waiting}
Let the capacity $c\in\N\cup\{\infty\}$ of the server be arbitrary but fixed and let $2<\Theta<3$.
For every sufficiently small $\varepsilon>0$, there is a set of requests $\sigma$ such that $\smartstart$ waits before starting the final schedule and such that the inequality
\begin{equation*}
\frac{\smartstartS}{\optS}\ge\frac{2\Theta^2+2\Theta}{\Theta^2+\Theta-2}-\varepsilon
\end{equation*}
holds, i.e., the upper bound established in \Cref{proposition: Upper Bound Waiting} is tight for $\Theta\in(2,3)$.
\end{proposition}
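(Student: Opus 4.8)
The plan is to combine the luring construction of \Cref{lemma: Luring} with a single additional dial-a-ride request, chosen so that \smartstart's final schedule realizes exactly the worst case of \Cref{lemma: Costs per Schedule} while simultaneously being long enough to force a waiting period; then $\smartstartS/\optS$ converges to $f_1(\Theta)=\tfrac{2\Theta^2+2\Theta}{\Theta^2+\Theta-2}$ as the luring error $\mu$ tends to $0$.

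Concretely, I would fix a small $\mu>0$, set $p:=\tfrac{\Theta}{\Theta+2}$ and $a:=\tfrac{1}{\Theta+2}=\tfrac{p}{\Theta}$, and apply \Cref{lemma: Luring} with target position $p$ and parameter $\mu$: this produces point requests contained in $[0,p]$ that drive \smartstart to $\posSch_N=p$ at time $p+\mu$ (and, being point requests, the instance remains valid for every capacity $c\in\N\cup\{\infty\}$). Then I would add the request $\sigma':=(-a,\,p;\,r)$ with release time $r:=a+2\mu$; note $r>\tfrac{p}{\Theta}+\mu$, so the proviso of \Cref{lemma: Luring} is met, while $r\le p+\mu$, so \smartstart only learns of $\sigma'$ after completing all its luring schedules. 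The instance is $\sigma=\{\text{luring requests}\}\cup\{\sigma'\}$.

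Next I would trace \smartstart: starting from $\posSch_N=p$ with only $\sigma'$ pending, the cheapest schedule is $p\to -a\to p$ of length $L(\timeSch_N,\posSch_N,\{\sigma'\})=2(p+a)=\tfrac{2\Theta+2}{\Theta+2}$, and since $\tfrac{2(p+a)}{\Theta-1}-(p+\mu)=\tfrac{-\Theta^2+3\Theta+2}{(\Theta+2)(\Theta-1)}-\mu>0$ for $\Theta<3$ and $\mu$ small, \smartstart waits and starts this final schedule only at $\timeSch_N=\tfrac{2(p+a)}{\Theta-1}$, hence $\smartstartS=\tfrac{\Theta}{\Theta-1}\cdot 2(p+a)=\tfrac{2\Theta^2+2\Theta}{(\Theta-1)(\Theta+2)}$. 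In parallel I would pin down $\optS$: the offline server can go $0\to -a\to p$, collecting $\sigma'$ at $-a$ at time $r$ and every luring request en route to $p$, for a makespan of $r+(a+p)=2a+p+2\mu=1+2\mu$; conversely any offline solution must visit $-a$ no earlier than $r$ to pick up $\sigma'$ and then needs a further $a+p$ to deliver it at $p$, so $\optS\ge r+a+p=1+2\mu$, giving $\optS=1+2\mu$. Thus $\tfrac{\smartstartS}{\optS}=\tfrac{1}{1+2\mu}\cdot\tfrac{2\Theta^2+2\Theta}{\Theta^2+\Theta-2}$, which exceeds $f_1(\Theta)-\varepsilon$ once $\mu$ is chosen small enough in terms of $\Theta$ and $\varepsilon$.

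The genuinely creative step is the choice of instance: taking the lure distance $p=\tfrac{\Theta}{\Theta+2}$ equal to the maximizer $\tfrac{\Theta}{\Theta+2}\optS$ in \Cref{lemma: Costs per Schedule}, and sourcing the real request at $-\tfrac{p}{\Theta}$ so that the offline optimum collects it essentially for free at the left end of its sweep exactly when it becomes available. After that, the remaining work is routine bookkeeping — checking the release times against \Cref{lemma: Luring} (including $\delta<(\Theta-1)\mu$), verifying that \smartstart indeed waits (this is where $\Theta<3$ enters), and letting $\mu\to 0$ — with the only mildly delicate point being the matching lower bound $\optS\ge 1+2\mu$, i.e.\ excluding cheaper offline routes that would serve $\sigma'$ only after visiting $p$; this is ruled out because delivering $\sigma'$ forces a visit to $-a$ no earlier than $r$ followed by a separate, later visit to $p$.
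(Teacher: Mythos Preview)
Your construction is correct and cleaner than the paper's, though the underlying idea is the same. The paper lures \smartstart to position~$1$ and then releases \emph{two} transportation requests $\sigma_1^{(1)}=(-\tfrac{1}{\Theta}+\varepsilon',0;\cdot)$ and $\sigma_1^{(2)}=(\tfrac{1}{\Theta},1;\cdot)$ whose combined optimal service from~$1$ has length $2+\tfrac{2}{\Theta}-2\varepsilon'$ (here the assumption $\Theta>2$ is used to pin down the order in which \smartstart serves them). You instead lure to the scaled position $p=\tfrac{\Theta}{\Theta+2}$ and release a \emph{single} request $(-\tfrac{p}{\Theta},p;\cdot)$ that forces exactly the same schedule length $2(p+a)=\tfrac{2(\Theta+1)}{\Theta+2}$; your lower bound on $\optS$ is also tighter, coming directly from the observation that delivering $\sigma'$ requires a visit to $-a$ at time $\ge r$ followed by a walk of length $a+p$. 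Both constructions then conclude via the waiting rule that $\smartstartS=\tfrac{\Theta}{\Theta-1}\cdot 2(p+a)$ and obtain the ratio $f_1(\Theta)/(1+O(\mu))$.

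Two small remarks. First, your parenthetical ``this is where $\Theta<3$ enters'' is slightly off: the waiting inequality $\tfrac{2(p+a)}{\Theta-1}>p+\mu$ actually holds for all $\Theta<\tfrac{3+\sqrt{17}}{2}\approx 3.56$, so the hypothesis $\Theta<3$ is not sharp here (and, in fact, you never use $\Theta>2$ either, so your instance works on a strictly larger range than the paper's). Second, in your \opt route the server carries $\sigma'$ while passing through the luring positions; since the luring requests are point requests they occupy no capacity, so this is fine for $c=1$. The paper's two-request version is arranged so that the carrying intervals of the added requests only meet the luring positions at endpoints, which makes the $c=1$ justification slightly more explicit but is not necessary.
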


\begin{proof}
Let $\varepsilon>0$ with $\smash{\varepsilon<\frac{2}{9}(\frac{2\Theta^2}{\Theta^2+\Theta-2})}$ and $\smash{\varepsilon'=\frac{\Theta^2+\Theta-2}{2\Theta^2}\varepsilon}$. We apply \Cref{lemma: Luring} with $p=1$ and~$\mu=\frac{\varepsilon'}{2}$. For convenience, we start the enumeration of schedules with the first schedule after the application of \Cref{lemma: Luring}. $\smartstart$ reaches position $\posSch_1=1$ at time $1+\frac{\varepsilon'}{2}$. Now let the requests 
\begin{align*}
\sigma_1^{(1)}&=\left(-\frac{1}{\Theta}+\varepsilon',0;\frac{1}{\Theta}+\varepsilon'\right),\\
\sigma_1^{(2)}&=\left(\frac{1}{\Theta},1;\frac{1}{\Theta}+\varepsilon'\right)
\end{align*}
appear. Note that both requests appear after time $\frac{1}{\Theta}+\frac{\varepsilon'}{2}$ and therefore do not interfere with the application of \Cref{lemma: Luring}. If $\smartstart$ delivers $\smash{\sigma_1^{(2)}}$ before collecting~$\smash{\sigma_1^{(1)}}$ the time it needs is at least
\begin{align*}
&\phantom{=}\;\left|1-\frac{1}{\Theta}\right|+\left|\frac{1}{\Theta}-1\right|+\left|1-\left(-\frac{1}{\Theta}+\varepsilon'\right)\right|+\left|\left(-\frac{1}{\Theta}+\varepsilon'\right)-0\right|\\
&=\frac{2\Theta-2}{\Theta}+\frac{\Theta+2}{\Theta}-2\varepsilon'\\
&=3-2\varepsilon'.
\end{align*}
The best schedule that delivers $\smash{\sigma_1^{(2)}}$ after collecting $\smash{\sigma_1^{(1)}}$ delivers $\smash{\sigma_1^{(1)}}$ before visiting the starting point $-\frac{1}{\Theta}+\varepsilon'$ of $\smash{\sigma_1^{(2)}}$ and needs time
\begin{align*}
&\phantom{=}\;\left|1-\left(-\frac{1}{\Theta}+\varepsilon'\right)\right|+\left|\left(-\frac{1}{\Theta}+\varepsilon'\right)-0\right|+\left|0-\frac{1}{\Theta}\right|+\left|\frac{1}{\Theta}-1\right|\\
&=1+\frac{1}{\Theta}-\varepsilon'+\frac{1}{\Theta}-\varepsilon'+1\\
&=2+\frac{2}{\Theta}-2\varepsilon'.
\end{align*}
By assumption, we have $\Theta>2$, which implies $2+\frac{2}{\Theta}-2\varepsilon'<3-2\varepsilon'$. Therefore, $\smartstart$ delivers $\sigma_1^{(2)}$ after collecting $\sigma_1^{(1)}$ and, for all $t\ge 1+\frac{\varepsilon'}{2}$, we have
\begin{equation*}
L(t,\posSch_1,\{\sigma_1^{(1)},\sigma_1^{(2)}\})=L(t,1,\{\sigma_1^{(1)},\sigma_1^{(2)}\})=2+\frac{2}{\Theta}-2\varepsilon'.
\end{equation*}
Again, by assumption, we have $\Theta<3$ and $\smash{\varepsilon<\frac{2}{9}(\frac{2\Theta^2}{\Theta^2+\Theta-2})}$, i.e., $\smash{\varepsilon'<\frac{2}{9}}$, which implies that for the time~$1+\frac{\varepsilon'}{2}$, when $\smartstart$ reaches position $\posSch_1=1$, the inequality
\begin{equation}
\frac{L(1+\frac{\varepsilon'}{2},\posSch_1,\{\sigma_1^{(1)},\sigma_1^{(2)}\})}{\Theta-1}=\frac{2+\frac{2}{\Theta}-2\varepsilon'}{\Theta-1}\overset{\Theta<3}{>}1+\frac{1}{3}-\varepsilon'\overset{\varepsilon'<\frac{2}{9}}{>}1+\frac{\varepsilon'}{2}\label{equation: Lower Bound Approx Starting Time One}
\end{equation}
holds. (Note that inequality~(\ref{equation: Lower Bound Approx Starting Time One}) also holds for slightly larger $\Theta$ if we let $\varepsilon\rightarrow 0$.) Because of inequality~(\ref{equation: Lower Bound Approx Starting Time One}), $\smartstart$ has a waiting period and starts the schedule~$S_1$ at time
\begin{align*}
\timeSch_1&=\min\left\{t\ge 1+\frac{\varepsilon'}{2}\;\middle|\; \frac{L(t,\posSch_1,\{\sigma_1^{(1)},\sigma_1^{(2)}\})}{\Theta-1}\le t\right\}\\
&=\min\left\{t\ge 1+\frac{\varepsilon'}{2}\;\middle|\; \frac{2+\frac{2}{\Theta}-2\varepsilon'}{\Theta-1}\le t\right\}\\
&=\frac{2+\frac{2}{\Theta}-2\varepsilon'}{\Theta-1}\\
&=\frac{2\Theta+2-2\varepsilon'\Theta}{\Theta(\Theta-1)}.
\end{align*}
To sum it up, we have
\begin{align*}
\smartstartS &=\timeSch_1+L(\timeSch_1,\posSch_1,\{\sigma_1^{(1)},\sigma_1^{(2)}\})\\
&=\frac{2\Theta+2-2\varepsilon'\Theta}{\Theta(\Theta-1)}+2+\frac{2}{\Theta}-2\varepsilon'\\
&=\frac{2\Theta+2-2\varepsilon'\Theta}{\Theta-1}.
\end{align*}
On the other hand, $\opt$ goes from the origin to $-\frac{1}{\Theta}+\varepsilon'$ to collect $\sigma_1^{(1)}$ at time $\frac{1}{\Theta}+\varepsilon'$ (i.e., it has to wait for $2\varepsilon'$ units of time after it reaches position $-\frac{1}{\Theta}+\varepsilon'$) and delivers $\sigma_1^{(1)}$ to the origin at time~$\frac{2}{\Theta}$. Let $q>0$ be the position of a request arising from the application of \Cref{lemma: Luring} at the beginning of this proof. Then this requests is released earlier than time $q+\frac{\varepsilon'}{2}$. On the other hand, $\opt$ reaches position~$q$ not earlier than time~$\frac{2}{\Theta}+q$. Since we have $\Theta<3$ and $\smash{\varepsilon<\frac{2}{9}(\frac{2\Theta^2}{\Theta^2+\Theta-2})}$, i.e., $\varepsilon'<\frac{2}{9}$, we have~$\smash{\frac{2}{\Theta}+q>q+\frac{\varepsilon'}{2}}$ and $\opt$ can go straight from the origin to position~$1$, collecting an delivering all requests that occur by the application of \Cref{lemma: Luring} as well as $\smash{\sigma_1^{(2)}}$ on the way. Therefore, we have
\begin{equation*}
\optS=\left|0-\left(-\frac{1}{\Theta}+\varepsilon'\right)\right|+2\varepsilon'+\left|\left(-\frac{1}{\Theta}+\varepsilon'\right)-1\right|=\frac{1}{\Theta}+\frac{\Theta+1}{\Theta}=\frac{\Theta+2}{\Theta}.
\end{equation*}
Note, that $\opt$ can do this even if the capacity is $c=1$, since no additional requests need to be carried over $[0,\frac{1}{\Theta}]\cup\{1\}$, where the requests of the application of \Cref{lemma: Luring} appear, and because the carrying paths of $\smash{\sigma_1^{(1)}}$ and $\smash{\sigma_1^{(2)}}$ are disjoint. Since we have~$\smash{\varepsilon'=\frac{\Theta^2+\Theta-2}{2\Theta^2}\varepsilon}$, we obtain
\begin{equation*}
\frac{\smartstartS}{\optS}=\frac{2\Theta^2+2\Theta-2\varepsilon'\Theta^2}{\Theta^2+\Theta-2}=\frac{2\Theta^2+2\Theta}{\Theta^2+\Theta-2}-\varepsilon,
\end{equation*}
as claimed.
\end{proof}

\begin{proposition}\label{proposition: Lower Bound No Waiting}
Let the capacity $c\in\N\cup\{\infty\}$ of the server be arbitrary but fixed and let $2\le\Theta\le\frac{1}{2}(1+\sqrt{13})$. For every sufficiently small $\varepsilon>0$ there is a set of requests $\sigma$ such that $\smartstart$ immediately starts $S_N$ after $S_{N-1}$ and such that
\begin{equation*}
\frac{\smartstartS}{\optS}\ge\Theta+1-\frac{\Theta-1}{3\Theta+3}-\varepsilon,
\end{equation*}
i.e., the upper bound established in \Cref{proposition: Upper Bound No Waiting} is tight for $\Theta\in[2,\frac{1}{2}(1+\sqrt{13})]\approx[2,2.303]$.
\end{proposition}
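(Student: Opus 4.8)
The plan is to refine the construction behind \Cref{proposition: Lower Bound Waiting} so that after the luring phase \smartstart performs \emph{two} more schedules: a schedule $S_{N-1}$ that it can only start after a waiting period, followed immediately by a final schedule $S_N$ with no waiting period. First I apply \Cref{lemma: Luring} with $p$ set to (the eventual value of $\optS$ times) $\tfrac{\Theta(\Theta-1)}{3\Theta+3}$ and an arbitrarily small $\mu$; this leaves \smartstart at $\posSch_{N-1}=p$ at time $\approx p$, with the last luring schedule $S_{N-2}$ having started at time $\timeSch_{N-2}\approx p/\Theta\approx\tfrac{\Theta-1}{3\Theta+3}\optS$, so that \Cref{lemma: Lower Bound Starting Time} is tight for $j=N-2$. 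Then I release (all with coinciding source and target, so the instance is valid for every $c\in\N\cup\{\infty\}$): a request at $\posSch_N\approx-\tfrac{\Theta-1}{3\Theta+3}\optS$; a request at a point $R\approx(2\Theta+1)\tfrac{\Theta-1}{3\Theta+3}\optS$ well to the right of $p$; and, only after \smartstart has started the schedule these trigger, one more request at the right endpoint $x_+\approx\bigl(1-\tfrac{\Theta-1}{3\Theta+3}\bigr)\optS$. The release times and the precise value of $R$ are chosen so that, as $\mu$ and the other perturbations tend to $0$, every inequality in the proof of \Cref{proposition: Upper Bound No Waiting} (Case~2) is met with equality.

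To compute $\smartstartS$: having reached $p$, \smartstart still satisfies $t<\tfrac{1}{\Theta-1}L(t,\posSch_{N-1},R_t)$ (this uses $\Theta\cdot\tfrac{\Theta-1}{3\Theta+3}<1$, which holds throughout the stated range), so it waits and then executes $S_{N-1}$, detouring right to $R$ and then all the way left to $\posSch_N$; a tiny downward perturbation of $R$ makes this route strictly shorter than the ``left first'' route, so $S_{N-1}$ indeed ends at $\posSch_N$ and $L(\timeSch_{N-1},\posSch_{N-1},\sigma_{S_{N-1}})\approx(\Theta-1)\optS$, whence $\timeSch_{N-1}=\tfrac{1}{\Theta-1}L\approx\optS$ and $\timeSch_N=\timeSch_{N-1}+L\approx\Theta\optS$. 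The final request is released during $S_{N-1}$, hence ignored; when $S_{N-1}$ ends one checks $\timeSch_N>\tfrac{1}{\Theta-1}L(\timeSch_N,\posSch_N,\sigma_{S_N})$ (here $\Theta\ge 2$ gives ample slack), so \smartstart starts $S_N$ immediately and walks from $\posSch_N$ to $x_+$, a further $\approx\bigl(1-\tfrac{\Theta-1}{3\Theta+3}\bigr)\optS$. Therefore $\smartstartS=\timeSch_N+L(\timeSch_N,\posSch_N,\sigma_{S_N})\approx\bigl(\Theta+1-\tfrac{\Theta-1}{3\Theta+3}\bigr)\optS=f_2(\Theta)\optS$.

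To bound $\optS$ from above I exhibit the trajectory $0\to\posSch_N\to x_+$: it reaches $\posSch_N$ just after that request is released, sweeps right while collecting every luring request and the request at $R$ on the way (all of these lie in $[0,x_+]$ and are passed only after their release times), and finally waits a negligible amount at $x_+$ for the last request. Its makespan is $\optS$ up to lower-order terms, and it is optimal because any schedule must cross $[x_-,x_+]$ starting from the origin (costing at least $2|x_-|+x_+=\optS$ up to lower-order terms) and must honour the late release time of the final request. Dividing, $\smartstartS/\optS\to\Theta+1-\tfrac{\Theta-1}{3\Theta+3}$, so making the luring parameters, the perturbation of $R$, and the delay of the last request sufficiently small yields $\smartstartS/\optS\ge\Theta+1-\tfrac{\Theta-1}{3\Theta+3}-\varepsilon$.

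The main obstacle is not the arithmetic but certifying \smartstart's exact itinerary: that it waits before $S_{N-1}$ but not before $S_N$, that the optimal offline schedule it computes for the two forcing requests is the expensive detour ending at $\posSch_N$ rather than some cheaper route, and that no released request ever lets it cut a corner. The hypothesis $2\le\Theta\le\tfrac12(1+\sqrt{13})$ is precisely the range where all of this fits together: the upper endpoint is exactly the inequality $(2\Theta+3)\tfrac{\Theta-1}{3\Theta+3}\le 1$, i.e., $\Theta^2-\Theta-3\le 0$, which is what is needed for the rightward detour of $S_{N-1}$ (whose length is forced to be $\approx(\Theta-1)\optS$) to stay within $[0,x_+]$ so that \opt can serve the point $R$ at no extra cost, while $\Theta\ge 2$ secures the waiting/non-waiting pattern of the two final schedules. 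The last thing to nail down carefully is \opt-optimality: a short case check that no reordering of the requests, and no alternative detour, beats the claimed makespan.
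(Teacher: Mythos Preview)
Your construction is the same as the paper's, just renormalised: where the paper lures \smartstart to $p=1$ and works out $\optS=\tfrac{3\Theta+3}{\Theta(\Theta-1)}$, you scale so that $\optS$ is the unit and $p=\tfrac{\Theta(\Theta-1)}{3\Theta+3}\optS$; the forcing requests at $-\tfrac{\Theta-1}{3\Theta+3}\optS$ and $(2\Theta+1)\tfrac{\Theta-1}{3\Theta+3}\optS$ match the paper's $-\tfrac1\Theta$ and $2+\tfrac1\Theta$, and the late request at $x_+$ plays the role of the paper's $\sigma_2$, with the same routing and the same role for the endpoints $\Theta\in[2,\tfrac12(1+\sqrt{13})]$.

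There is one arithmetic inconsistency in your sketch: you write $x_+\approx\bigl(1-\tfrac{\Theta-1}{3\Theta+3}\bigr)\optS$, but with $\posSch_N\approx-\tfrac{\Theta-1}{3\Theta+3}\optS$ that would make the final walk $|\posSch_N-x_+|=\optS$ and the \opt\ trajectory $0\to\posSch_N\to x_+$ of length $\bigl(1+\tfrac{\Theta-1}{3\Theta+3}\bigr)\optS\neq\optS$. The value consistent with the rest of your computation (and with the paper's $\sigma_2$ at $\tfrac{3\Theta+3}{\Theta(\Theta-1)}-\tfrac2\Theta$) is $x_+\approx\bigl(1-\tfrac{2(\Theta-1)}{3\Theta+3}\bigr)\optS=\tfrac{\Theta+5}{3\Theta+3}\optS$; with this correction your stated walk length $(1-\tfrac{\Theta-1}{3\Theta+3})\optS$, your $\smartstartS$, your $\optS$, and the $R\le x_+$ check all come out right.
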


\begin{proof}
Let $\varepsilon>0$ with $\smash{\varepsilon<\frac{1}{5\Theta}\frac{3\Theta^2-\Theta}{3\Theta+3}}$ and $\smash{\varepsilon'=\frac{3\Theta+3}{3\Theta^2-\Theta}\varepsilon}$. We apply \Cref{lemma: Luring} with~$p=1$ and~$\mu=\frac{\varepsilon'}{2}$. For convenience, we start the enumeration of the schedules with the first schedule after the application of \Cref{lemma: Luring}. Algorithm $\smartstart$ reaches position $\posSch_1=1$ at time~$1+\frac{\varepsilon'}{2}$. Now let the requests 
\begin{align*}
\sigma_1^{(1)}&=\left(2+\frac{1}{\Theta}-\varepsilon',2+\frac{1}{\Theta}-\varepsilon';\frac{1}{\Theta}+\varepsilon'\right),\\
\sigma_1^{(2)}&=\left(-\frac{1}{\Theta},-\frac{1}{\Theta};\frac{1}{\Theta}+\varepsilon'\right)
\end{align*}
appear. 
Note that both requests are released after time $\frac{1}{\Theta}+\frac{\varepsilon'}{2}$ and, therefore, do not interfere with the application of \Cref{lemma: Luring}. 
If $\smartstart$ serves $\sigma_1^{(2)}$ before serving $\sigma_1^{(1)}$ the time it needs is at least
\begin{equation*}
\left|1-\left(-\frac{1}{\Theta}\right)\right|+\left|\left(-\frac{1}{\Theta}\right)-\left(2+\frac{1}{\Theta}-\varepsilon'\right)\right|=1+\frac{1}{\Theta}+2+\frac{2}{\Theta}-\varepsilon'=3+\frac{3}{\Theta}-\varepsilon'.
\end{equation*}
The best schedule that serves $\sigma_1^{(2)}$ after serving $\sigma_1^{(1)}$ needs time
\begin{align*}
\left|1-\left(2+\frac{1}{\Theta}-\varepsilon'\right)\right|+\left|\left(2+\frac{1}{\Theta}-\varepsilon'\right)-\left(-\frac{1}{\Theta}\right)\right|&=1+\frac{1}{\Theta}-\varepsilon'+2+\frac{2}{\Theta}-\varepsilon'\\
&=3+\frac{3}{\Theta}-2\varepsilon'.
\end{align*}
Thus, $\smartstart$ serves $\sigma_1^{(2)}$ after serving $\sigma_1^{(1)}$, and, for all $t\ge 1+\frac{\varepsilon'}{2}$, we obtain
\begin{equation*}
L\left(\timeSch,\posSch_1,\{\sigma_1^{(1)},\sigma_1^{(2)}\}\right)=L\left(\timeSch,1,\{\sigma_1^{(1)},\sigma_1^{(2)}\}\right)=3+\frac{3}{\Theta}-2\varepsilon'.
\end{equation*}
By assumption, we have $\Theta\le\frac{1}{2}(1+\sqrt{13})$ and $\varepsilon<\frac{1}{5\Theta}\frac{3\Theta^2-\Theta}{3\Theta+3}$, i.e., $\varepsilon'<\frac{1}{5\Theta}<1$, which implies that for the time $1+\frac{\varepsilon'}{2}$, when $\smartstart$ reaches position $\posSch_1=1$, the inequality
\begin{align*}
\frac{L\left(1+\frac{\varepsilon'}{2},\posSch_1,\{\sigma_1^{(1)},\sigma_1^{(2)}\}\right)}{\Theta-1}
&\myoverset{}{=}{64}\frac{3+\frac{3}{\Theta}-2\varepsilon'}{\Theta-1}\\
&\myoverset{}{=}{64}\frac{3-2\varepsilon'}{\Theta-1}+\frac{3}{\Theta(\Theta-1)}\\
&\myoverset{$1<\Theta\le\frac{1}{2}(1+\sqrt{13})$}{\ge}{64}\frac{3-2\varepsilon'}{\frac{1}{2}(\sqrt{13}-1)}+\frac{3}{\frac{1}{4}(\sqrt{13}-1)(1+\sqrt{13})}\\
&\myoverset{}{=}{64}\frac{3-2\varepsilon'}{\frac{1}{2}(\sqrt{13}-1)}+1\\
&\myoverset{$\frac{1}{2}(\sqrt{13}-1)<2$}{>}{64} 1 + \frac{\varepsilon'}{2}
\end{align*}
holds. 
Thus, $\smartstart$ has a waiting period and starts schedule $S_1$ at time
\begin{align*}
\timeSch_1&=\min\left\{t\ge 1+\frac{\varepsilon'}{2}\;\middle|\; \frac{L(\timeSch,\posSch_1,\{\sigma_1^{(1)},\sigma_1^{(2)}\})}{\Theta-1}\le t\right\}\\
&=\min\left\{t\ge 1+\frac{\varepsilon'}{2}\;\middle|\; \frac{3+\frac{3}{\Theta}-2\varepsilon'}{\Theta-1}\le t\right\}\\
&=\frac{3+\frac{3}{\Theta}-2\varepsilon'}{\Theta-1}\\
&=\frac{3\Theta+3}{\Theta(\Theta-1)}-\frac{2\varepsilon'}{\Theta-1}.
\end{align*}
Next, we let the final request
\begin{equation*}
\sigma_2=\left(\frac{3\Theta+3}{\Theta(\Theta-1)}-\frac{2}{\Theta}-\varepsilon',\frac{3\Theta+3}{\Theta(\Theta-1)}-\frac{2}{\Theta}-\varepsilon';\frac{3\Theta+3}{\Theta(\Theta-1)}\right)
\end{equation*}
appear. $\smartstart$ finishes schedule $S_1$ at time
\begin{equation*}
\timeSch_1+L(\timeSch_1,\posSch_1,\{\sigma_1^{(1)},\sigma_1^{(2)}\})=\frac{3\Theta+3}{\Theta(\Theta-1)}-\frac{2\varepsilon'}{\Theta-1}+3+\frac{3}{\Theta}-2\varepsilon'=\frac{3\Theta+3}{\Theta-1}-\frac{2\Theta\varepsilon'}{\Theta-1}
\end{equation*}
at position $\posSch_2=-\frac{1}{\Theta}$. For all $t\ge \frac{3\Theta+3}{\Theta-1}-\frac{2\Theta}{\Theta-1}\varepsilon'$, we obtain
\begin{equation*}
L\left(\timeSch,-\frac{1}{\Theta},\{\sigma_2\}\right)=\frac{3\Theta+3}{\Theta(\Theta-1)}-\frac{1}{\Theta}-\varepsilon'.
\end{equation*}
By assumption, we have~$2\le\Theta\le\frac{1}{2}(1+\sqrt{13}) < 3$ and~$\varepsilon<\frac{1}{5\Theta}\frac{3\Theta^2-\Theta}{3\Theta+3}$, i.e.,~$\varepsilon'<\frac{1}{5\Theta}$, which implies that, for the finishing time $\frac{3\Theta+3}{\Theta-1}-\frac{2\Theta\varepsilon'}{\Theta-1}$ of schedule~$S_1$, the inequality
\begin{align}
\frac{L\left(\frac{3\Theta+3}{\Theta-1}-\frac{2\Theta\varepsilon'}{\Theta-1},-\frac{1}{\Theta},\{\sigma_2\}\right)}{\Theta-1}
&\myoverset{}{=}{40}\frac{3\Theta+3}{\Theta(\Theta-1)^2}-\frac{1+\Theta\varepsilon'}{\Theta(\Theta-1)}\nonumber\\
&\myoverset{$\Theta\ge2$}{<}{40} \frac{3\Theta+3}{\Theta-1}-\frac{1+\Theta\varepsilon'}{\Theta(\Theta-1)}\nonumber\\
&\myoverset{$1>5\Theta\varepsilon'$}{<}{40}\frac{3\Theta+3}{\Theta-1}-\frac{6\varepsilon'}{\Theta-1}\nonumber\\
&\myoverset{$\Theta<3$}{<}{40}\frac{3\Theta+3}{\Theta-1}-\frac{2\Theta\varepsilon'}{\Theta-1}\label{equation: Starting Time Approximation Lower Bound}
\end{align}
holds. 
(Note that inequality (\ref{equation: Starting Time Approximation Lower Bound}) still holds for slightly smaller $\Theta$ if we let $\varepsilon\rightarrow 0$.) 
Because of inequality~(\ref{equation: Starting Time Approximation Lower Bound}), the final schedule $S_2$ is started at time 
\begin{equation*}
\timeSch_2=\frac{3\Theta+3}{\Theta-1}-\frac{2\Theta\varepsilon'}{\Theta-1}
\end{equation*}
without waiting. To sum it up, we have
\begin{align*}
\smartstartS&=\timeSch_2+L(\timeSch_2,\posSch_2,\{\sigma_2\})\\
&=\frac{3\Theta+3}{\Theta-1}-\frac{2\Theta\varepsilon'}{\Theta-1}+\frac{3\Theta+3}{\Theta(\Theta-1)}-\frac{1}{\Theta}-\varepsilon'\\
&=\frac{3\Theta+3}{\Theta-1}+\frac{3\Theta+3}{\Theta(\Theta-1)}-\frac{1}{\Theta}-\frac{3\Theta-1}{\Theta-1}\varepsilon'.
\end{align*}
On the other hand, $\opt$ goes from the origin straight to position $-\frac{1}{\Theta}$ serving request $\sigma_1^{(2)}$ at time $\frac{1}{\Theta}+\varepsilon'$ (i.e., it has to wait for $\varepsilon'$ units of time after it reaches position $-\frac{1}{\Theta}$) and returns to the origin at time~$\frac{2}{\Theta}+\varepsilon'$. 
Let $q>0$ be the position of a request that has occurred by the application of \Cref{lemma: Luring} at the beginning of this proof. 
Then this request is released earlier than time~$q+\frac{\varepsilon'}{2}$. Since $\opt$ reaches position $q$ not earlier than time~$\frac{2}{\Theta}+\varepsilon'+q>q+\frac{\varepsilon'}{2}$, $\opt$ can go straight from the origin to the right and can serve all remaining requests without waiting. 
Note that the position $\smash{\frac{3\Theta+3}{\Theta(\Theta-1)}-\frac{2}{\Theta}-\varepsilon'}$ of $\sigma_2$ is equal to or to right of the position $\smash{2+\frac{1}{\Theta}-\varepsilon'}$ of $\smash{\sigma_1^{(2)}}$ because of~$\Theta\le\frac{1}{2}(1+\sqrt{13})$. 
Thus, $\opt$ finishes at position~$\frac{3\Theta+3}{\Theta(\Theta-1)}-\frac{2}{\Theta}-\varepsilon'$ and we have
\begin{align*}
\optS&=\left|0-\left(-\frac{1}{\Theta}\right)\right|+\varepsilon'+\left|-\frac{1}{\Theta}-\left(\frac{3\Theta+3}{\Theta(\Theta-1)}-\frac{2}{\Theta}-\varepsilon'\right)\right|\\
&=\frac{1}{\Theta}+\varepsilon'+\frac{1}{\Theta}+\frac{3\Theta+3}{\Theta(\Theta-1)}-\frac{2}{\Theta}-\varepsilon'\\
&=\frac{3\Theta+3}{\Theta(\Theta-1)}.
\end{align*}
Note that $\opt$ can do this even if $c=1$ since for all requests the starting point is equal to the destination. 
Since we have $\varepsilon'=\frac{3\Theta+3}{3\Theta^2-\Theta}\varepsilon$, we finally obtain
\begin{align*}
\frac{\smartstartS}{\optS}&=\frac{\frac{3\Theta+3}{\Theta-1}+\frac{3\Theta+3}{\Theta(\Theta-1)}-\frac{1}{\Theta}-\frac{3\Theta-1}{\Theta-1}\varepsilon'}{\frac{3\Theta+3}{\Theta(\Theta-1)}}\\
&=\Theta+1-\frac{\Theta-1}{3\Theta+3}-\frac{3\Theta^2-\Theta}{3\Theta+3}\varepsilon'\\
&=\Theta+1-\frac{\Theta-1}{3\Theta+3}-\varepsilon,
\end{align*}
as claimed.
\end{proof}

Recall that the optimal parameter $\Theta^*$ established in \Cref{theorem: General Upper Bound} is the only positive, real solution of the equation
\begin{equation*}
\Theta+1-\frac{\Theta-1}{3\Theta+3}=\frac{2\Theta^2+2\Theta}{\Theta^2+\Theta-2},
\end{equation*}
which is $\Theta^*\approx 2.0526$. Therefore, according to \Cref{proposition: Lower Bound Waiting} and \Cref{proposition: Lower Bound No Waiting} the parameter $\Theta^*$ lies in the ranges where the upper bounds of Propositions \ref{proposition: Upper Bound Waiting} and \ref{proposition: Upper Bound No Waiting} are both tight. It remains to make sure that for all $\Theta$ that lie outside of this range the competitive ratio of $\smartstart_\Theta$ is larger than $\rho^*\approx 2.93768$.

\begin{lemma}\label{lemma: Lower Bound One}
Let the capacity $c\in\N\cup\{\infty\}$ of the server be arbitrary but fixed and let $1<\Theta\le 2$. There is a set of requests $\sigma$ such that
\begin{equation*}
\frac{\smartstartS}{\optS}>\rho^*\approx 2.93768.
\end{equation*}
\end{lemma}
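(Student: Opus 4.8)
The plan is to recycle, almost verbatim, the instance built in the proof of \Cref{proposition: Lower Bound Waiting}, and to observe that in the complementary parameter range $1<\Theta\le 2$ \smartstart is forced into the \emph{longer} of its two possible final schedules, which already pushes its ratio above $\rho^*$. Concretely, first I would fix $\Theta\in(1,2]$ and a sufficiently small $\varepsilon'>0$ (any $\varepsilon'<\tfrac{3-\rho^*}{2}$ will do), apply \Cref{lemma: Luring} with $p=1$ and $\mu=\tfrac{\varepsilon'}{2}$ so that \smartstart is parked at $\posSch_1=1$ at time $1+\tfrac{\varepsilon'}{2}$ (restarting the schedule count after the luring phase), and then release the two requests
\begin{equation*}
\sigma_1^{(1)}=\left(-\tfrac1\Theta+\varepsilon',\,0;\,\tfrac1\Theta+\varepsilon'\right),\qquad \sigma_1^{(2)}=\left(\tfrac1\Theta,\,1;\,\tfrac1\Theta+\varepsilon'\right),
\end{equation*}
exactly as in \Cref{proposition: Lower Bound Waiting}; they are released at time $\tfrac1\Theta+\varepsilon'>\tfrac1\Theta+\tfrac{\varepsilon'}{2}$, so they do not disturb the luring phase.

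\textbf{\smartstart's behaviour (the one place that changes).} The proof of \Cref{proposition: Lower Bound Waiting} shows that, starting from position $1$ at any time $t\ge 1+\tfrac{\varepsilon'}{2}$, the shortest schedule serving $\{\sigma_1^{(1)},\sigma_1^{(2)}\}$ has length $3-2\varepsilon'$ if $\sigma_1^{(2)}$ is delivered before $\sigma_1^{(1)}$ is collected and length $2+\tfrac2\Theta-2\varepsilon'$ otherwise. For $\Theta\le 2$ one has $2+\tfrac2\Theta-2\varepsilon'\ge 3-2\varepsilon'$, so now $L(t,1,\{\sigma_1^{(1)},\sigma_1^{(2)}\})=3-2\varepsilon'$. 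Because $\Theta\le 2$ also yields $\tfrac{3-2\varepsilon'}{\Theta-1}\ge 3-2\varepsilon'>1+\tfrac{\varepsilon'}{2}$, \smartstart has a waiting period and starts its final schedule at $\timeSch_1=\tfrac{3-2\varepsilon'}{\Theta-1}$; since no further request appears, this schedule serves everything still pending, so
\begin{equation*}
\smartstartS=\timeSch_1+L(\timeSch_1,1,\{\sigma_1^{(1)},\sigma_1^{(2)}\})=\frac{3-2\varepsilon'}{\Theta-1}+(3-2\varepsilon')=\frac{(3-2\varepsilon')\,\Theta}{\Theta-1}.
\end{equation*}

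\textbf{\opt and conclusion.} For \opt nothing changes: the route analysed in \Cref{proposition: Lower Bound Waiting} --- go left, wait $2\varepsilon'$, carry $\sigma_1^{(1)}$ to the origin by time $\tfrac2\Theta$, then walk straight to $1$ picking up all luring requests and $\sigma_1^{(2)}$ --- only uses $\Theta<3$ and works even for $c=1$, so $\optS=\tfrac{\Theta+2}{\Theta}$. Hence
\begin{equation*}
\frac{\smartstartS}{\optS}=\frac{(3-2\varepsilon')\,\Theta^2}{\Theta^2+\Theta-2}\ \ge\ 3-2\varepsilon'\ >\ \rho^*,
\end{equation*}
where the first inequality uses $\Theta^2+\Theta-2\le\Theta^2$ for $\Theta\le 2$ (strict for $\Theta<2$) and the second uses $\varepsilon'<\tfrac{3-\rho^*}{2}$, with $\rho^*$ as in \Cref{theorem: General Upper Bound}.

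\textbf{Main obstacle.} There is essentially no technical difficulty here: the lemma is a corollary of \Cref{proposition: Lower Bound Waiting} read on $\Theta\le 2$ instead of $\Theta>2$. The only points to check carefully are that the luring construction and the \opt-route argument of \Cref{proposition: Lower Bound Waiting} go through unchanged for $\Theta\le 2$ (they do, since those arguments never exploit $\Theta>2$), and the single sign flip $2+\tfrac2\Theta-2\varepsilon'\ge 3-2\varepsilon'$ that now makes \smartstart's optimal final schedule the long one of length $3-2\varepsilon'$.
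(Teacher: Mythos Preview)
Your argument is correct and in the same spirit as the paper's, but you take a cleaner route: instead of designing a fresh $\varepsilon$-perturbed instance as the paper does (with destinations $1+\tfrac{\varepsilon}{2}$ and $-\varepsilon$, leading to $L=3$ but an $\varepsilon$-term in $\optS$ and the auxiliary function $g_1$), you simply reuse the instance of \Cref{proposition: Lower Bound Waiting} verbatim and observe that for $\Theta\le 2$ the two candidate schedule lengths swap order, giving $L=3-2\varepsilon'$ and hence the very clean bound $\frac{(3-2\varepsilon')\Theta^2}{\Theta^2+\Theta-2}\ge 3-2\varepsilon'>\rho^*$. Both approaches converge to the same limiting bound $\tfrac{3\Theta^2}{\Theta^2+\Theta-2}\ge 3$; yours gets there with less bookkeeping.

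Two small remarks. First, your opening sentence about \smartstart being ``forced into the \emph{longer} of its two possible final schedules'' is misleading: \smartstart always executes the \emph{shorter} schedule, and indeed $3-2\varepsilon'\le 2+\tfrac{2}{\Theta}-2\varepsilon'$ for $\Theta\le 2$. Your computation is correct; only the prose is inverted. Second, strictly speaking the proof of \Cref{proposition: Lower Bound Waiting} only shows that any schedule delivering $\sigma_1^{(2)}$ before collecting $\sigma_1^{(1)}$ has length \emph{at least} $3-2\varepsilon'$; to conclude $L=3-2\varepsilon'$ you should note that the explicit route $1\to\tfrac{1}{\Theta}\to 1\to -\tfrac{1}{\Theta}+\varepsilon'\to 0$ (valid already for $c=1$) attains this bound. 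This is a one-line addition.
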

\begin{proof}
Let~$\varepsilon>0$ with~$\varepsilon<\frac{1}{100}$. We apply \Cref{lemma: Luring} with~$p=1$ and $\mu=\varepsilon$. For convenience, we start the enumeration of the schedules with the first schedule after the application of \Cref{lemma: Luring}. $\smartstart$ reaches position $\posSch_1=1$ at time~$1+\varepsilon$. Now let the requests 
\begin{align*}
\sigma_1^{(1)}&=\left(\frac{1}{\Theta},1+\frac{\varepsilon}{2};\frac{1}{\Theta}+2\varepsilon\right),\\
\sigma_1^{(2)}&=\left(-\frac{1}{\Theta},-\varepsilon;\frac{1}{\Theta}+2\varepsilon\right)
\end{align*}
appear. Note that both requests appear after time $\frac{1}{\Theta}+\varepsilon$ and therefore do not interfere with the application of \Cref{lemma: Luring}. If $\smartstart$ collects $\sigma_1^{(2)}$ before delivering $\sigma_1^{(1)}$ the time it needs is at least
\begin{equation*}
\left|1-\left(-\frac{1}{\Theta}\right)\right|+\left|\left(-\frac{1}{\Theta}\right)-\left(1+\frac{\varepsilon}{2}\right)\right|=2+\frac{2}{\Theta}+\frac{\varepsilon}{2}.
\end{equation*}
The best schedule that collects $\sigma_1^{(2)}$ after delivering $\sigma_1^{(1)}$ needs time
\begin{align*}
&\hphantom{=}\,\left|1-\frac{1}{\Theta}\right|+\left|\frac{1}{\Theta}-\left(1+\frac{\varepsilon}{2}\right)\right|+\left|\left(1+\frac{\varepsilon}{2}\right)-\left(-\frac{1}{\Theta}\right)\right|+\left|\left(-\frac{1}{\Theta}\right)-(-\varepsilon)\right|\\
&=\frac{\Theta-1}{\Theta}+\frac{\Theta-1}{\Theta}+\frac{\varepsilon}{2}+\frac{\Theta+1}{\Theta}+\frac{\varepsilon}{2}+\frac{1}{\Theta}-\varepsilon\\
&=3.
\end{align*}
By assumption, we have $\Theta\le 2$, which implies $3<2+\frac{2}{\Theta}+\frac{\varepsilon}{2}$. Therefore, $\smartstart$ delivers $\sigma_1^{(2)}$ after collecting $\sigma_1^{(1)}$ and for all $t\ge 1+\varepsilon$ we have
\begin{equation*}
L(t,\posSch_1,\{\sigma_1^{(1)},\sigma_1^{(2)}\})=L(t,1,\{\sigma_1^{(1)},\sigma_1^{(2)}\})=3.
\end{equation*}
Again, by assumption, we have $\Theta\le 2$ and $\varepsilon<\frac{1}{100}$, which implies that for the time~$1+\varepsilon$, when $\smartstart$ reaches position $\posSch_1=1$, the inequality
\begin{equation*}
\frac{L\left(1+\varepsilon,\posSch_1,\{\sigma_1^{(1)},\sigma_1^{(2)}\}\right)}{\Theta-1}=\frac{3}{\Theta-1}>1+\varepsilon
\end{equation*}
holds. Thus, $\smartstart$ has a waiting period and starts schedule $S_1$ at time
\begin{align*}
\timeSch_1&=\min\left\{t\ge 1+\varepsilon\;\middle|\; \frac{L(t,\posSch_1,\{\sigma_1^{(1)},\sigma_1^{(2)}\})}{\Theta-1}\le t\right\}\\
&=\min\left\{t\ge 1+\varepsilon\;\middle|\; \frac{3}{\Theta-1}\le t\right\}\\
&=\frac{3}{\Theta-1}.
\end{align*}
To sum it up, we have
\begin{equation*}
\smartstartS=\timeSch_1+L(\timeSch_1,\posSch_1,\{\sigma_1^{(1)},\sigma_1^{(2)}\})=\frac{3}{\Theta-1}+3=\frac{3\Theta}{\Theta-1}.
\end{equation*}
On the other hand, $\opt$ goes from the origin to $-\frac{1}{\Theta}$ to collect $\sigma_1^{(1)}$ at time $\frac{1}{\Theta}+2\varepsilon$ (i.e., it has to wait for $2\varepsilon$ units of time after it reaches position $-\frac{1}{\Theta}$) and returns to the origin at time $\frac{2}{\Theta}+2\varepsilon$. Let $q$ be the position of a request that has occurred by the application of \Cref{lemma: Luring} at the beginning of this proof. Then this requests is released earlier than time $q+\varepsilon$. Since $\opt$ reaches position $q$ not earlier than time~$\frac{2}{\Theta}+2\varepsilon+q>q+\varepsilon$, $\opt$ can go straight from position $-\frac{1}{\Theta}$ to position $1+\frac{\varepsilon}{2}$ collecting and delivering all requests that occur by the application of \Cref{lemma: Luring} as well as $\smash{\sigma_1^{(2)}}$ on the way. Therefore, we have
\begin{equation*}
\optS=\left|0-\left(-\frac{1}{\Theta}\right)\right|+2\varepsilon+\left|-\frac{1}{\Theta}-\left(1+\frac{\varepsilon}{2}\right)\right|=\frac{2}{\Theta}+1+\frac{5\varepsilon}{2}=\frac{\Theta+2+\frac{5}{2}\varepsilon\Theta}{\Theta}.
\end{equation*}
Note, that $\opt$ can do this even if the capacity is $c=1$ since no additional requests need to be carried over $[0,\frac{1}{\Theta}]\cup\{1\}$, where the requests of the application of \Cref{lemma: Luring} appear, and since the carrying paths of $\smash{\sigma_1^{(1)}}$ and $\smash{\sigma_1^{(2)}}$ are disjoint. Finally, we have
\begin{equation}\label{equation: Fraction Smartstart Opt One}
\frac{\smartstartS}{\optS}=\frac{3\Theta^2}{\Theta^2+\Theta-2+\frac{5}{2}\varepsilon(\Theta^2-\Theta)}.
\end{equation}
Note that the fraction in equality (\ref{equation: Fraction Smartstart Opt One}) becomes larger with decreasing $\varepsilon$. By assumption, we have $\varepsilon<\frac{1}{100}$, which implies
\begin{equation*}
\frac{\smartstartS}{\optS}>\frac{3\Theta^2}{\frac{41}{40}\Theta^2+\frac{39}{40}\Theta-2}=:g_1(\Theta).
\end{equation*}
The function $g_1$ is monotonically decreasing on $(1,2]$. Therefore, we have
\begin{equation*}
\frac{\smartstartS}{\optS}>g_1(2)=\frac{80}{27}>2.95>\rho^*
\end{equation*}
for all $\Theta\in (1,2]$.
\end{proof}

\begin{lemma}\label{lemma: Lower Bound Two}
Let the capacity $c\in\N\cup\{\infty\}$ of the server be arbitrary but fixed and let $\frac{1}{2}(1+\sqrt{13})<\Theta\le 1+\sqrt{2}$. There is a set of requests $\sigma$ such that
\begin{equation*}
\frac{\smartstartS}{\optS}>\rho^*\approx 2.93768.
\end{equation*}
\end{lemma}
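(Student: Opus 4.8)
The plan is to mirror the three lower-bound arguments preceding this lemma (Lemma~\ref{lemma: Lower Bound One} and Propositions~\ref{proposition: Lower Bound Waiting},~\ref{proposition: Lower Bound No Waiting}): first drag $\smartstart$ to the right essentially for free via Lemma~\ref{lemma: Luring}, then release a short, carefully placed burst of requests that forces one expensive schedule with a long waiting period, and finally release one more request that $\smartstart$ must serve in a separate, non-waiting schedule while $\opt$ can absorb it almost for free. Concretely, I would fix a small $\varepsilon>0$, set a scaled auxiliary quantity $\varepsilon'$, and apply Lemma~\ref{lemma: Luring} with $p=1$ and $\mu$ of order $\varepsilon'$, so that at time $\approx 1$ the server sits at position $1$. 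I then release two point-requests, one near $-\tfrac1\Theta$ and one near $2+\tfrac1\Theta$, with release time $\approx\tfrac1\Theta$ (just after the luring phase is safely over, so as not to disturb it). A two-case comparison of the only two reasonable orderings (broken, as in Proposition~\ref{proposition: Lower Bound No Waiting}, by a tiny $\varepsilon'$-offset of the right request) shows that the optimal schedule from position~$1$ serving these two requests runs first to the right and then all the way left, has length $\approx 3+\tfrac3\Theta$, and ends at $-\tfrac1\Theta$; and since $3+\tfrac3\Theta>(\Theta-1)(1+\mu)$ throughout the admissible range (which is well inside $\Theta<2+\sqrt7$), $\smartstart$ first waits, so this schedule $S_1$ starts at time $t_1=\tfrac{3+3/\Theta}{\Theta-1}$ and finishes at $\Theta t_1=\tfrac{3\Theta+3}{\Theta-1}$ at position $-\tfrac1\Theta$.

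Next I release a final point-request $\sigma_2$ near position $2+\tfrac1\Theta$ with a release time $r_2$ in the window $\bigl(t_1,\,2+\tfrac3\Theta\bigr]$, which is non-empty precisely because $\Theta>\tfrac12(1+\sqrt{13})$ (the condition $t_1<2+\tfrac3\Theta$). As $r_2>t_1$, the request $\sigma_2$ is ignored during $S_1$ and served by a separate final schedule $S_2$; since $\tfrac{3\Theta+3}{\Theta-1}>2+\tfrac3\Theta\ge r_2$, the request $\sigma_2$ is already present when $S_1$ ends and $\smartstart$ does not wait before $S_2$, so $S_2$ starts at $\Theta t_1$ and has length $\approx 2+\tfrac2\Theta$ (the walk from $-\tfrac1\Theta$ to $2+\tfrac1\Theta$), giving $\smartstart(\sigma)\approx\tfrac{3\Theta+3}{\Theta-1}+2+\tfrac2\Theta$. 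For the offline cost I would use the route from the origin to $-\tfrac1\Theta$ (briefly waiting for the left request), then straight right to $2+\tfrac1\Theta$, collecting all luring requests and both right requests on the way; this route is valid even for $c=1$ since all requests have coinciding source and target, and because $r_2\le 2+\tfrac3\Theta$ it incurs no extra waiting, so $\opt(\sigma)\approx 2+\tfrac3\Theta$. Choosing $\varepsilon'$ proportional to $\varepsilon$ then yields $\tfrac{\smartstart(\sigma)}{\opt(\sigma)}\ge g_2(\Theta)-\varepsilon$ with
\[
  g_2(\Theta)=\frac{\tfrac{3\Theta+3}{\Theta-1}+2+\tfrac2\Theta}{2+\tfrac3\Theta}=\frac{(\Theta+1)(5\Theta-2)}{(\Theta-1)(2\Theta+3)} .
\]

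It then remains to verify $g_2(\Theta)>\rho^*$ on $\bigl(\tfrac12(1+\sqrt{13}),\,1+\sqrt2\,\bigr]$. I would do this by checking that $g_2$ is strictly decreasing there (its derivative has numerator $-(\Theta^2+22\Theta+7)<0$) and evaluating at the right endpoint, $g_2(1+\sqrt2)=\tfrac{33+14\sqrt2}{17}\approx 3.106>2.93768\approx\rho^*$; letting $\varepsilon\to0$ finishes the lemma. The main obstacle is the bookkeeping of release times: I must simultaneously guarantee (i) that the luring phase is undisturbed, (ii) that $\sigma_1^{(1)},\sigma_1^{(2)}$ are present when $S_1$ is started but $\sigma_2$ is not, (iii) that $\sigma_2$ has appeared by the time $S_1$ ends so that $S_2$ starts immediately, and (iv) that $\opt$ still reaches $\sigma_2$ without waiting. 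Pinning down exactly which schedule $\smartstart$ chooses for $S_1$ and verifying the waiting/no-waiting conditions at $S_1$ and $S_2$ is where the endpoints $\tfrac12(1+\sqrt{13})$ and $1+\sqrt2$ of the admissible $\Theta$-range enter, and all of these inequalities must remain strict in the limit $\varepsilon\to0$.
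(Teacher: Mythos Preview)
Your proposal is correct and follows essentially the same construction as the paper: lure $\smartstart$ to position~$1$, release two point requests near $-\tfrac1\Theta$ and $2+\tfrac1\Theta$ that force a right-then-left schedule of length $\approx 3+\tfrac3\Theta$ with a waiting period, then release a final point request near $2+\tfrac1\Theta$ just after $t_1$ so that $\smartstart$ must serve it in a separate non-waiting schedule while $\opt$ absorbs it on a left-then-right walk of length $\approx 2+\tfrac3\Theta$. Your limiting formulation ($\varepsilon\to0$) and the paper's fixed-$\varepsilon$ computation yield the same function $g_2$ and the same endpoint evaluation $g_2(1+\sqrt2)=\tfrac{33+14\sqrt2}{17}>\rho^*$; the only cosmetic difference is that the paper pins down explicit $\varepsilon$-offsets on the request positions to break the ordering tie for $S_1$, whereas you describe this qualitatively. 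One small remark: the clause ``$\sigma_2$ is already present when $S_1$ ends and $\smartstart$ does not wait before $S_2$'' conflates two separate checks---presence of $\sigma_2$ and the inequality $\tfrac{2+2/\Theta}{\Theta-1}\le\tfrac{3\Theta+3}{\Theta-1}$---but the latter is trivially true, so the conclusion stands.
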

\begin{proof}
Let $\varepsilon>0$ with $\varepsilon<\frac{1}{25}$. We apply \Cref{lemma: Luring} with $p=1$ and $\mu=\varepsilon$. For convenience, we start the enumeration of the schedules with the first schedule after the application of \Cref{lemma: Luring}. $\smartstart$ reaches position $\posSch_1=1$ at time $1+\varepsilon$. Now let the requests 
\begin{align*}
\sigma_1^{(1)}&=\left(2+\frac{1}{\Theta}-\frac{\varepsilon}{2},2+\frac{1}{\Theta}-\frac{\varepsilon}{2};\frac{1}{\Theta}+2\varepsilon\right),\\
\sigma_1^{(2)}&=\left(-\frac{1}{\Theta}-\varepsilon,-\frac{1}{\Theta}-\varepsilon;\frac{1}{\Theta}+2\varepsilon\right)
\end{align*}
appear. Note that both requests appear after time $\frac{1}{\Theta}+\varepsilon$ and therefore do not interfere with the application of \Cref{lemma: Luring}. If $\smartstart$ serves $\sigma_1^{(2)}$ before serving $\sigma_1^{(1)}$ the time it needs is at least
\begin{align*}
\left|1-\left(-\frac{1}{\Theta}-\varepsilon\right)\right|+\left|\left(-\frac{1}{\Theta}-\varepsilon\right)-\left(2+\frac{1}{\Theta}-\frac{\varepsilon}{2}\right)\right|&=\frac{\Theta+1}{\Theta}+\varepsilon+\frac{2\Theta+2}{\Theta}+\frac{\varepsilon}{2}\\
&=3+\frac{3}{\Theta}+\frac{3}{2}\varepsilon.
\end{align*}
The best schedule that serves $\sigma_1^{(2)}$ after serving $\sigma_1^{(1)}$ needs time
\begin{align*}
&\phantom{=}\;\left|1-\left(2+\frac{1}{\Theta}-\frac{\varepsilon}{2}\right)\right|+\left|\left(2+\frac{1}{\Theta}-\frac{\varepsilon}{2}\right)-\left(-\frac{1}{\Theta}-\varepsilon\right)\right|\\
&=\frac{\Theta+1}{\Theta}-\frac{\varepsilon}{2}+\frac{2\Theta+2}{\Theta}+\frac{\varepsilon}{2}\\
&=3+\frac{3}{\Theta}.
\end{align*}
Therefore $\smartstart$ serves $\sigma_1^{(2)}$ after serving $\sigma_1^{(1)}$ and, for all $t\ge 1+\varepsilon$, we have
\begin{equation*}
L(t,\posSch_1,\{\sigma_1^{(1)},\sigma_1^{(2)}\})=L(t,1,\{\sigma_1^{(1)},\sigma_1^{(2)}\})=3+\frac{3}{\Theta}.
\end{equation*}
By assumption, we have $\Theta\le 1+\sqrt{2}$ and $\varepsilon<\frac{1}{25}$, which implies that for the time $1+\varepsilon$, when $\smartstart$ reaches position $\posSch_1=1$, the inequality
\begin{align*}
\frac{L\left(1+\varepsilon,\posSch_1,\{\sigma_1^{(1)},\sigma_1^{(2)}\}\right)}{\Theta-1}&\myoverset{}{=}{55}\frac{3+\frac{3}{\Theta}}{\Theta-1}\\
&\myoverset{$\Theta\le 1+\sqrt{2}$}{\ge}{55}\frac{3}{\sqrt{2}}+\frac{3}{2+\sqrt{2}}\\
&\myoverset{$\varepsilon<\frac{1}{10}$}{>}{55}1+\frac{\varepsilon}{2}
\end{align*}
holds. Thus, $\smartstart$ has a waiting period and starts schedule $S_1$ at time
\begin{align*}
\timeSch_1&=\min\left\{t\ge 1+\varepsilon\;\middle|\; \frac{L(t,\posSch_1,\{\sigma_1^{(1)},\sigma_1^{(2)}\})}{\Theta-1}\le t\right\}\\
&=\min\left\{t\ge 1+\varepsilon\;\middle|\; \frac{3+\frac{3}{\Theta}}{\Theta-1}\le t\right\}\\
&=\frac{3+\frac{3}{\Theta}}{\Theta-1}\\
&=\frac{3\Theta+3}{\Theta(\Theta-1)}.
\end{align*}
Next, we let the final request
\begin{equation*}
\sigma_2=\left(2+\frac{1}{\Theta}-\varepsilon,2+\frac{1}{\Theta}-\varepsilon;\frac{3\Theta+3}{\Theta(\Theta-1)}+\varepsilon\right)
\end{equation*}
appear. $\smartstart$ finishes schedule $S_1$ at time
\begin{equation*}
\timeSch_1+L\left(\timeSch_1,\posSch_1,\{\sigma_1^{(1)},\sigma_1^{(2)}\}\right)=\frac{3\Theta+3}{\Theta(\Theta-1)}+3+\frac{3}{\Theta}=\frac{3\Theta+3}{\Theta-1}
\end{equation*}
at position $\posSch_2=-\frac{1}{\Theta}-\varepsilon$. For all $t\ge\frac{3\Theta+3}{\Theta-1}$, we have
\begin{equation*}
L(t,-\frac{1}{\Theta}-\varepsilon,\{\sigma_2\})=2+\frac{2}{\Theta}.
\end{equation*}
By assumption, we have $\Theta>\frac{1}{2}(1+\sqrt{13})$, which implies that for the finishing time $\frac{3\Theta+3}{\Theta-1}$ of schedule $S_1$ the inequality
\begin{equation*}
\frac{L\left(\frac{3\Theta+3}{\Theta-1},\posSch_2,\{\sigma_2\}\right)}{\Theta-1}=\frac{2+\frac{2}{\Theta}}{\Theta-1}<\frac{3\Theta+3}{\Theta-1}
\end{equation*}
holds. Therefore the final schedule $S_2$ is started at time $\timeSch_2=\frac{3\Theta+3}{\Theta-1}$. To sum it up, we have
\begin{equation*}
\smartstartS=\timeSch_2+L(\timeSch_2,\posSch_2,\{\sigma_2\})=\frac{3\Theta+3}{\Theta-1}+2+\frac{2}{\Theta}.
\end{equation*}
On the other hand, $\opt$  goes from the origin straight to position $-\frac{1}{\Theta}-\varepsilon$ to serve request~$\smash{\sigma_1^{(2)}}$ at time $\frac{1}{\Theta}+2\varepsilon$ (i.e., it has to wait for $\varepsilon$ units of time after it reaches position~$-\frac{1}{\Theta}-\varepsilon$) and returns to the origin at time $\frac{1}{\Theta}+3\varepsilon$. Let $q>0$ be the position of a request that has occurred by the application of \Cref{lemma: Luring} at the beginning of this proof. Then this requests is released earlier than time $q+\varepsilon$. Since $\opt$ reaches position $q$ not earlier than time~$\frac{2}{\Theta}+3\varepsilon+q>q+\varepsilon$, $\opt$ can go straight from position $-\frac{1}{\Theta}-\varepsilon$ to position $2+\frac{1}{\Theta}-\frac{\varepsilon}{2}$ serving the requests that occur by applying \Cref{lemma: Luring} as well as $\smash{\sigma_1^{(1)}}$ and $\sigma_2$ on the way. Therefore, we have
\begin{align*}
\optS&=\left|0-\left(-\frac{1}{\Theta}-\varepsilon\right)\right|+\varepsilon+\left|-\frac{1}{\Theta}-\varepsilon-\left(2+\frac{1}{\Theta}-\varepsilon\right)\right|\\
&=\frac{1}{\Theta}+3\varepsilon+2+\frac{2}{\Theta}-\frac{\varepsilon}{2}\\
&=\frac{2\Theta+3}{\Theta}+\frac{5\varepsilon}{2}.
\end{align*}
Note that $\opt$ reaches position $2+\frac{1}{\Theta}-\frac{\varepsilon}{2}$ at time $\frac{2\Theta+3}{\Theta}+\frac{5\varepsilon}{2}$ and can immediately serve~$\smash{\sigma_1^{(1)}}$ since the assumption $\Theta>\frac{1}{2}(1+\sqrt{13})$ implies 
\begin{equation*}
\frac{2\Theta+3}{\Theta}+\frac{5\varepsilon}{2}>\frac{2\Theta+3}{\Theta}+\varepsilon>\frac{3\Theta+3}{\Theta(\Theta-1)}+\varepsilon.
\end{equation*}
The latter inequality holds, because of the monotonicity of the curves $2\Theta+3$ and $3+\frac{3}{\Theta}$ and their intersection at $\Theta=\frac{1}{2}(1+\sqrt{13})$. Note furthermore that $\opt$ can serve all requests on the way even if capacity $c=1$ holds since for all requests the starting point is equal to the ending point. To sum it up, we have
\begin{equation}\label{equation: Fraction Smartstart Opt Two}
\frac{\smartstartS}{\optS}=\frac{\frac{3\Theta+3}{\Theta-1}+2+\frac{2}{\Theta}}{\frac{2\Theta+3}{\Theta}+\frac{5\varepsilon}{2}}.
\end{equation}
Note that the fraction in (\ref{equation: Fraction Smartstart Opt Two}) becomes larger with decreasing $\varepsilon$. By assumption, we have $\varepsilon<\frac{1}{25}$, which implies
\begin{equation*}
\frac{\smartstartS}{\optS}>\frac{\frac{3\Theta+3}{\Theta-1}+2+\frac{2}{\Theta}}{\frac{2\Theta+3}{\Theta}+\frac{1}{10}}=:g_2(\Theta).
\end{equation*}
The function $g_2$ is monotonically decreasing on $(\frac{1}{2}(1+\sqrt{13}),1+\sqrt{2}]$. Therefore, we have
\begin{equation*}
\frac{\smartstartS}{\optS}\ge g_2(1+\sqrt{2})=\frac{10}{573}(109+45\sqrt{2})>3>\rho^*
\end{equation*}
for all $\Theta\in(\frac{1}{2}(1+\sqrt{13}),1+\sqrt{2}]$.
\end{proof}

\begin{lemma}\label{lemma: Lower Bound Three}
Let the capacity $c\in\N\cup\{\infty\}$ of the server be arbitrary but fixed and let $1+\sqrt{2}<\Theta< 3$. There is a set of requests $\sigma$ such that
\begin{equation*}
\frac{\smartstartS}{\optS}>\rho^*\approx 2.93768.
\end{equation*}
\end{lemma}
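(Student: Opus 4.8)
The plan is to follow the template of Lemmas~\ref{lemma: Lower Bound One} and~\ref{lemma: Lower Bound Two}: use \Cref{lemma: Luring} (with $p=1$, $\mu=\varepsilon$) to move \smartstart to position~$1$ at time~$1+\varepsilon$ at negligible cost to \opt, then release requests that force \smartstart into a myopically optimal but globally wasteful first schedule~$S_1$, and finally release a last request that \smartstart must serve in a separate final schedule~$S_2$. A single placement of requests will not beat $\rho^*$ on all of $(1+\sqrt2,3)$ — the bound coming from the \Cref{lemma: Lower Bound Two}-type construction is $g_2(\Theta)=\frac{5\Theta^2+3\Theta-2}{(2\Theta+3)(\Theta-1)}$, which drops below $\rho^*$ as $\Theta\to 3$ — so I would split the interval at a threshold $\Theta_0\approx 2.714$ and treat each piece separately.

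For $\Theta\in(1+\sqrt2,\Theta_0]$ I would re-run the construction of \Cref{lemma: Lower Bound Two} essentially verbatim (only the declared validity range changes): requests just right of $2+\tfrac1\Theta$ and just left of $-\tfrac1\Theta$ force \smartstart to first wait and then serve them in $S_1$ (length $\approx 3+\tfrac3\Theta$, ending at the left point), while a point request near $2+\tfrac1\Theta$ released just after $\timeSch_1$ forces $S_2$ (length $\approx 2+\tfrac2\Theta$, no waiting); meanwhile \opt serves everything at cost $\approx\tfrac{2\Theta+3}{\Theta}$ on one leftward-then-rightward sweep, the ``\opt does not wait'' check being exactly $\Theta>\tfrac{1+\sqrt{13}}{2}$, which holds. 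This gives $\smartstartS/\optS\to g_2(\Theta)$; since $g_2$ is strictly decreasing, it then suffices to verify the single numeric inequality $g_2(\Theta_0)>\rho^*$. For $\Theta\in[\Theta_0,3)$ I would instead use a variant in which the first request is a \emph{transport} request from $\tfrac1\Theta$ to $1$ (together with a point request at $-\tfrac1\Theta$): \smartstart's myopically optimal $S_1$ drives to $\tfrac1\Theta$, back to $1$ to unload, and on to $-\tfrac1\Theta$ (length $3-\tfrac1\Theta$), which beats the competing route of length $2+\tfrac2\Theta$ precisely because $\Theta<3$; a point request at position~$1$, released just after $\timeSch_1$ but early enough for \opt to pick it up on its rightward sweep (this needs $\Theta>1+\sqrt2$), forces \smartstart to re-traverse $[-\tfrac1\Theta,1]$ in $S_2$. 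Then $\optS\to\tfrac{\Theta+2}{\Theta}$ and $\smartstartS\to\tfrac{3\Theta-1}{\Theta-1}+1+\tfrac1\Theta$, so the ratio tends to $g_3(\Theta)=\tfrac{4\Theta^2-\Theta-1}{\Theta^2+\Theta-2}$; since the only critical point of $g_3$ in $(1,3)$ lies below $\Theta_0$, $g_3$ is increasing on $[\Theta_0,3)$ and it suffices to check $g_3(\Theta_0)>\rho^*$. Letting $\varepsilon\to 0$ in both cases concludes.

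The routine but delicate part — and the main obstacle — is verifying that \smartstart actually behaves as described on each instance: that it ignores the new requests while finishing the luring schedules, that it waits (resp.\ does not wait) before $S_1$ and $S_2$ exactly as claimed, and above all that the optimal route for $S_1$ is the ``wrong'' one. This last point is where the endpoint conditions $\Theta<3$ and $\Theta>1+\sqrt2$ enter and why the $\varepsilon$-perturbations of the request positions are needed to strictly tip the tie between the intended route and its mirror route; it is also where the precise value of $\Theta_0$ is pinned down, so that $g_2$ is still comfortably above $\rho^*$ at $\Theta_0$ while $g_3$ is already past its minimum there, making the two monotonicity arguments go through.
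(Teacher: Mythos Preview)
Your proposal is correct, and in fact your second construction is exactly the one the paper uses. The difference is that the paper does \emph{not} split the interval: it runs your transport-request construction (pickup at $\tfrac1\Theta$ delivered to $1$, point request at $-\tfrac1\Theta$, final point request at $1$) on \emph{all} of $(1+\sqrt2,3)$, and then simply observes that the resulting ratio function $g_3(\Theta)=\frac{4\Theta^2-\Theta-1}{\Theta^2+\Theta-2}$ has a unique interior minimum (at $\Theta=(7+\sqrt{34})/5\approx 2.566$) whose value is well above $\rho^*$. So the splitting at $\Theta_0$ and the detour through the \Cref{lemma: Lower Bound Two} construction are unnecessary: your second instance alone already beats $\rho^*$ on the whole range $(1+\sqrt2,3)$, since the validity checks you identify --- $\Theta<3$ for the route comparison in $S_1$, and $\Theta>1+\sqrt2$ for \opt to catch $\sigma_2$ at position~$1$ without waiting --- cover it exactly.

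What your approach buys is that you only ever evaluate the ratio functions at endpoints (by monotonicity on each subinterval), whereas the paper has to locate and evaluate an interior critical point. What it costs is carrying a second construction and verifying that the \Cref{lemma: Lower Bound Two} instance remains valid past its originally declared range $\Theta\le 1+\sqrt2$ (which it does: the waiting condition before $S_1$ holds up to $\Theta=2+\sqrt7$, and the \opt-timing condition only needs $\Theta>\tfrac{1+\sqrt{13}}2$). Also note that $g_2(\Theta_0)\approx 2.975$ is rather close to $\rho^*$, so if you keep the split you should be explicit about choosing $\varepsilon$ small enough per $\Theta$, rather than just writing ``$\varepsilon\to 0$''.
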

\begin{proof}
Let $\varepsilon>0$ with $\varepsilon<\frac{1}{20}$. We apply \Cref{lemma: Luring} with $p=1$ and $\mu=\varepsilon$. For convenience, we start the enumeration of the schedules with the first schedule after the application of \Cref{lemma: Luring}. $\smartstart$ reaches position $\posSch_1=1$ at time $1+\varepsilon$. Now let the requests 
\begin{align*}
\sigma_1^{(1)}&=\left(\frac{1}{\Theta},1;\frac{1}{\Theta}+2\varepsilon\right),\\
\sigma_1^{(2)}&=\left(-\frac{1}{\Theta},-\frac{1}{\Theta};\frac{1}{\Theta}+2\varepsilon\right)
\end{align*}
appear. Note that both requests appear after time $\frac{1}{\Theta}+\varepsilon$ and therefore do not interfere with the application of \Cref{lemma: Luring}. If $\smartstart$ serves $\sigma_1^{(2)}$ before delivering $\sigma_1^{(1)}$ the time it needs is at least
\begin{equation*}
\left|1-\left(-\frac{1}{\Theta}\right)\right|+\left|\left(-\frac{1}{\Theta}\right)-1\right|=2+\frac{2}{\Theta}.
\end{equation*}
The best schedule that serves $\sigma_1^{(2)}$ after delivering $\sigma_1^{(1)}$ needs time
\begin{equation*}
\left|1-\left(\frac{1}{\Theta}\right)\right|+\left|\left(\frac{1}{\Theta}\right)-1\right|+\left|1-\left(-\frac{1}{\Theta}\right)\right|=\frac{2\Theta-2}{\Theta}+\frac{\Theta+1}{\Theta}=3-\frac{1}{\Theta}.
\end{equation*}
By assumption, we have $\Theta<3$, which implies $3-\frac{1}{\Theta}<2+\frac{2}{\Theta}$. Therefore $\smartstart$ serves~$\sigma_1^{(2)}$ after delivering $\sigma_1^{(1)}$ and for all $t\ge 1+\varepsilon$ we have
\begin{equation*}
L(t,\posSch_1,\{\sigma_1^{(1)},\sigma_1^{(2)}\})=L(t,1,\{\sigma_1^{(1)},\sigma_1^{(2)}\})=3-\frac{1}{\Theta}.
\end{equation*}
Again, by assumption, we have $\Theta<3$ and $\varepsilon<\frac{1}{20}$, which implies that for the time $1+\varepsilon$, when $\smartstart$ reaches position $\posSch_1=1$ the inequality
\begin{equation*}
\frac{L\left(1+\varepsilon,\posSch_1,\{\sigma_1^{(1)},\sigma_1^{(2)}\}\right)}{\Theta-1}=\frac{3-\frac{1}{\Theta}}{\Theta-1}>1+\varepsilon
\end{equation*}
holds. Thus, $\smartstart$ has a waiting period and starts schedule $S_1$ at time
\begin{align*}
\timeSch_1&=\min\left\{t\ge 1+\varepsilon\;\middle|\; \frac{L(t,\posSch_1,\{\sigma_1^{(1)},\sigma_1^{(2)}\})}{\Theta-1}\le t\right\}\\
&=\min\left\{t\ge 1+\varepsilon\;\middle|\; \frac{3-\frac{1}{\Theta}}{\Theta-1}\le t\right\}\\
&=\frac{3-\frac{1}{\Theta}}{\Theta-1}\\
&=\frac{3\Theta-1}{\Theta(\Theta-1)}.
\end{align*}
Next, we let the final request
\begin{equation*}
\sigma_2=\left(1,1;\frac{3\Theta-1}{\Theta(\Theta-1)}+\varepsilon\right)
\end{equation*}
appear. $\smartstart$ finishes schedule $S_1$ at time
\begin{equation*}
\timeSch_1+L(\timeSch_1,\posSch_1,\{\sigma_1^{(1)},\sigma_1^{(2)}\})=\frac{3-\frac{1}{\Theta}}{\Theta-1}+3-\frac{1}{\Theta}=\frac{3\Theta-1}{\Theta-1}.
\end{equation*}
at position $\posSch_2=-\frac{1}{\Theta}$. For all $t\ge\frac{3\Theta-1}{\Theta-1}$, we have
\begin{equation*}
L\left(t+L(\timeSch_1,-\frac{1}{\Theta},\{\sigma_2\}\right)=1+\frac{1}{\Theta}.
\end{equation*}
By assumption, we have $\Theta>1+\sqrt{2}$, which implies that for the finishing time $\frac{3\Theta-1}{\Theta-1}$ of schedule $S_1$ the inequality
\begin{equation*}
\frac{L\left(\frac{3\Theta-1}{\Theta-1},\posSch_2,\{\sigma_2\}\right)}{\Theta-1}=\frac{1+\frac{1}{\Theta}}{\Theta-1}<\frac{3\Theta-1}{\Theta-1}.
\end{equation*}
holds. Therefore the final schedule $S_2$ is started at time $\timeSch_2=\frac{3\Theta-1}{\Theta-1}$. To sum it up, we have
\begin{equation*}
\smartstartS=\timeSch_2+L(\timeSch_2,\posSch_2,\{\sigma_2\})=\frac{3\Theta-1}{\Theta-1}+1+\frac{1}{\Theta}.
\end{equation*}
On the other hand, $\opt$ goes from the origin to $-\frac{1}{\Theta}$ to collect $\smash{\sigma_1^{(1)}}$ at time $\frac{1}{\Theta}+2\varepsilon$ (i.e., it has to wait for $2\varepsilon$ units of time after reaching position $-\frac{1}{\Theta}$) and returns to the origin at time $\frac{2}{\Theta}+2\varepsilon$. Let $q>0$ be the position of a request that has occurred by the application of \Cref{lemma: Luring} at the beginning of this proof. Then this requests is released earlier than time $q+\varepsilon$. Since $\opt$ reaches position $q$ not earlier than time~$\frac{2}{\Theta}+2\varepsilon+q>q+\varepsilon$, $\opt$ can go straight from position~$-\frac{1}{\Theta}$ to position~$1$ collecting and delivering all requests that occur by the application of \Cref{lemma: Luring} as well as~$\smash{\sigma_1^{(2)}}$. Note that $\opt$ can also collect $\sigma_2$ at arrival at position $1$ at time $1+\frac{2}{\Theta}+2\varepsilon$ since the assumption $\Theta>1+\sqrt{2}$ implies
\begin{equation*}
1+\frac{2}{\Theta}+2\varepsilon=\frac{\Theta^2+\Theta-2}{\Theta(\Theta-1)}+2\varepsilon\ge\frac{3\Theta-1}{\Theta(\Theta-1)}+\varepsilon.
\end{equation*}
The latter inequality holds, because of the monotonicity of the curves $\Theta^2+\Theta-2$ and $3\Theta-1$ and intersection at $\Theta=1+\sqrt{2}$. Therefore, we have
\begin{equation*}
\optS=\left|0-\left(-\frac{1}{\Theta}\right)\right|+2\varepsilon+\left|-\frac{1}{\Theta}-1\right|=1+\frac{2}{\Theta}+2\varepsilon.
\end{equation*}
Note that $\opt$ can do this even if capacity $c=1$ holds since no additional requests need to be carried over $[0,\frac{1}{\Theta}]\cup\{1\}$, where the requests of the application of \Cref{lemma: Luring} appear. Finally, we have
\begin{equation}\label{equation: Fraction Smartstart Opt Three}
\frac{\smartstartS}{\optS}=\frac{\frac{3\Theta-1}{\Theta-1}+1+\frac{1}{\Theta}}{1+\frac{2}{\Theta}+2\varepsilon}.
\end{equation}
Note that the fraction in equality (\ref{equation: Fraction Smartstart Opt Three}) becomes larger with decreasing $\varepsilon$. By assumption, we have $\varepsilon<\frac{1}{20}$, which implies
\begin{equation*}
\frac{\smartstartS}{\optS}>\frac{\frac{3\Theta-1}{\Theta-1}+1+\frac{1}{\Theta}}{1.1+\frac{2}{\Theta}}=:g_3(\Theta).
\end{equation*}
The function $g_3$ has exactly one local minimum in the range $(1+\sqrt{2},3)$ at
\begin{equation*}
\hat{\Theta}=\frac{349}{247}+\frac{\sqrt{84998}}{247}.
\end{equation*}
Therefore, we have
\begin{equation*}
\frac{\smartstartS}{\optS}>g_3(\hat{\Theta})\approx 3.01454>3>\rho^*.
\end{equation*}
\end{proof}

\begin{lemma}\label{lemma: Lower Bound Four}
Let the capacity $c\in\N\cup\{\infty\}$ of the server be arbitrary but fixed and let $\Theta\ge 3$. There is a set of requests $\sigma$ such that 
\begin{equation*}
\frac{\smartstartS}{\optS}>\rho^*\approx 2.93768.
\end{equation*}
\end{lemma}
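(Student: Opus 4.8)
The plan is to follow the template of the preceding lower-bound lemmas (\Cref{lemma: Lower Bound Two,lemma: Lower Bound Three}): first apply \Cref{lemma: Luring} to move \smartstart to a position far from the origin at essentially no cost to \opt, and then release a carefully timed, carefully placed sequence of requests so that \smartstart is driven into a wasteful sequence of committed schedules -- each of which \smartstart starts either after its prescribed waiting period or immediately, whichever the construction forces -- while \opt is able to serve the whole instance with a single efficient sweep. The bound being targeted is $g_4(\Theta):=4-\tfrac{3}{\Theta}$, which tends to $4$ as $\Theta\to\infty$; this is consistent with the fact that for large $\Theta$ the algorithm \smartstart degenerates towards \ignore, whose competitive ratio is exactly $4$.

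Concretely, I would invoke \Cref{lemma: Luring} with $p=1$ and $\mu=\varepsilon$, so that \smartstart reaches position $1$ at time $1+\varepsilon$ while all luring requests lie on \opt's eventual sweep and contribute nothing to $\optS$. I would then release, at a time just after $\tfrac{1}{\Theta}$, a burst consisting of a transport request together with a point request, placed so that (i)~the cheapest schedule $S_1$ for \smartstart from position $1$ has length $L_1$ large enough to trigger a waiting period (i.e.\ $L_1>(\Theta-1)(1+\varepsilon)$), and (ii)~for every $\Theta\ge3$ the cheapest such schedule uses the service order we intend -- it is precisely at $\Theta=3$ that the two candidate orderings of such a burst tie (as in the computation $3-\tfrac1\Theta=2+\tfrac2\Theta\iff\Theta=3$ from the previous lemma), which is exactly why the regime $\Theta\ge 3$ needs its own lemma. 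After \smartstart commits to $S_1$ (starting it at time $\timeSch_1=L_1/(\Theta-1)$ and finishing it at time $\tfrac{\Theta}{\Theta-1}L_1$ at a prescribed endpoint $\posSch_2$), I would release one final request $\sigma_2$, far to the appropriate side and with release time just above $\timeSch_1$, so that $\sigma_2$ falls into a fresh schedule $S_2$ that \smartstart starts without a further waiting period; the positions are then to be calibrated so that $\smartstartS$ and $\optS$ come out in the ratio $4-\tfrac{3}{\Theta}$ in the limit $\varepsilon\to0$.

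The argument then concludes exactly as in the earlier lemmas: $g_4(\Theta)=4-\tfrac{3}{\Theta}$ is strictly increasing on $[3,\infty)$, so $\tfrac{\smartstartS}{\optS}>g_4(3)=3>\rho^*\approx2.93768$ for all $\Theta\ge3$.

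I expect the real work to be the joint calibration of positions and release times. On the \smartstart side, the burst must be tuned so that the forced schedule lengths satisfy the waiting inequality for $S_1$ but its negation for $S_2$, and so that \smartstart's internal service orderings are the intended ones throughout the whole range $\Theta\ge3$ -- the $\Theta=3$ tie being the delicate point that motivates the case split. On the \opt side, $\sigma_2$'s release time must exceed $\timeSch_1$ (so that $\sigma_2$ is not folded into $S_1$) while the positions must still allow \opt's single sweep to reach $\sigma_2$'s position after its release without ever idling, and simultaneously must not disturb the luring phase; verifying that one sweep honours all release times -- of the luring requests, of the intermediate burst, and of $\sigma_2$ -- is where the usual auxiliary parameters $\varepsilon,\varepsilon'$ and the bulk of the bookkeeping enter. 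Once the construction is pinned down, the remaining computations, the monotonicity of $g_4$, and the comparison with $\rho^*$ are routine.
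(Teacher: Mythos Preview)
Your high-level intuition---that for large $\Theta$ \smartstart degenerates towards \ignore, and that the target bound $4-\tfrac{3}{\Theta}$ reflects this---is exactly right. But your concrete plan contains a genuine gap, and it is not a matter of calibration.

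You propose to make the burst schedule $S_1$ trigger a waiting period, i.e., $L_1>(\Theta-1)(1+\varepsilon)$. This is precisely what \emph{cannot} be done cheaply in the regime $\Theta\ge 3$, and it is the opposite of what the paper does. After luring to $p=1$, your waiting condition forces $L_1>\Theta-1$; but by \Cref{lemma: Costs per Schedule} we have $L_1\le\frac{2\Theta+2}{\Theta+2}\,\optS<2\,\optS$, so $\optS$ must grow at least linearly in $\Theta$. Once $\optS$ is of order $\Theta$, the extra $\frac{1}{\Theta-1}L_1$ you gain from the wait is only of order $1$, and a short computation (try $\Theta=10$ with a single far-away point request as the burst) shows that the achievable ratio drops well below $4-\tfrac{3}{\Theta}$. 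Your reading of the threshold $\Theta=3$ as merely an ordering tie inherited from \Cref{lemma: Lower Bound Three} misses the point: that tie is a symptom, but the structural reason this regime needs its own lemma is that the waiting inequality for $S_1$ can no longer be enforced while keeping $\optS$ bounded.

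The paper's construction instead exploits the \ignore-like behaviour directly: \emph{both} $S_1$ and $S_2$ start immediately, with no waiting at all. The burst is a transport request from $\tfrac{\Theta+1}{2\Theta}+\tfrac{\varepsilon}{2}$ to $1$ together with a point request at $\tfrac{1}{\Theta}$; from position $1$ its optimal schedule has length $2-\tfrac{2}{\Theta}-\varepsilon$, and since $\tfrac{2-2/\Theta-\varepsilon}{\Theta-1}\le 1+\varepsilon$ precisely for $\Theta\ge 3$, \smartstart commits at time $1+\varepsilon$ and ends $S_1$ at position $\tfrac{1}{\Theta}$. A point request at $1$ released just after $t_1$ then forces another $1-\tfrac{1}{\Theta}$, while \opt waits $2\varepsilon$ and sweeps $[0,1]$ once. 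This yields $\smartstartS=4-\tfrac{3}{\Theta}$ against $\optS=1+2\varepsilon$, after which your monotonicity step and the comparison $g_4(3)=3>\rho^*$ go through unchanged.
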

\begin{proof}
Let $\varepsilon>0$ with $\varepsilon<\frac{1}{75}$. We apply \Cref{lemma: Luring} with $p=1$ and $\mu=\varepsilon$. For convenience, we start the enumeration of the schedules with the first schedule after the application of \Cref{lemma: Luring}. Algorithm $\smartstart$ reaches position $\posSch_1=1$ at time $1+\varepsilon$. Now let the requests 
\begin{align*}
\sigma_1^{(1)}&=\left(\frac{\Theta+1}{2\Theta}+\frac{\varepsilon}{2},1;\frac{1}{\Theta}+2\varepsilon\right),\\
\sigma_1^{(2)}&=\left(\frac{1}{\Theta},\frac{1}{\Theta};\frac{1}{\Theta}+2\varepsilon\right)
\end{align*}
appear. Note that both requests appear after time $\frac{1}{\Theta}+\varepsilon$ and therefore do not interfere with the application of \Cref{lemma: Luring} and that the carrying path of $\smash{\sigma_1^{(1)}}$ does not cross the position~$\frac{1}{\Theta}$ of $\smash{\sigma_1^{(2)}}$ since the assumption $\Theta\ge 3$ implies $\smash{\frac{1}{\Theta}<\frac{\Theta+1}{2\Theta}}$. Thus, if $\smartstart$ serves $\smash{\sigma_1^{(2)}}$ before delivering $\smash{\sigma_1^{(1)}}$ the time it needs is at least
\begin{equation*}
\left|1-\frac{1}{\Theta}\right|+\left|\frac{1}{\Theta}-1\right|=2-\frac{2}{\Theta}.
\end{equation*}
The best schedule that serves $\sigma_1^{(2)}$ after delivering $\sigma_1^{(1)}$ needs time
\begin{align*}
&\phantom{=}\;\left|1-\left(\frac{\Theta+1}{2\Theta}+\frac{\varepsilon}{2}\right)\right|+\left|\left(\frac{\Theta+1}{2\Theta}+\frac{\varepsilon}{2}\right)-1\right|+\left|1-\frac{1}{\Theta}\right|\\
&=2\left(\frac{\Theta-1}{2\Theta}-\frac{\varepsilon}{2}\right)+\frac{\Theta-1}{\Theta}\\
&=2-\frac{2}{\Theta}-\varepsilon.
\end{align*}
Therefore $\smartstart$ serves $\sigma_1^{(2)}$ after delivering $\sigma_1^{(1)}$ and for all $t\ge 1+\varepsilon$ we have
\begin{equation*}
L\left(t,\posSch_1,\{\sigma_1^{(1)},\sigma_1^{(2)}\}\right)=L\left(t,1,\{\sigma_1^{(1)},\sigma_1^{(2)}\}\right)=2-\frac{2}{\Theta}-\varepsilon.
\end{equation*}
By assumption, we have $\Theta\ge 3$, which implies that for the finishing time $1+\varepsilon$ of schedule $S_1$ the inequality
\begin{equation*}
\frac{L\left(1+\varepsilon,\posSch_1,\{\sigma_1^{(1)},\sigma_1^{(2)}\}\right)}{\Theta-1}=\frac{2-\frac{2}{\Theta}-\varepsilon}{\Theta-1}\le1+\varepsilon
\end{equation*}
holds. Thus, the schedule $S_1$ is started immediately after the application of \Cref{lemma: Luring} at time $\timeSch_1=1+\varepsilon$. Next, we let the final request
\begin{equation*}
\sigma_2=\left(1,1;1+2\varepsilon\right)
\end{equation*}
appear. $\smartstart$ finishes schedule $S_1$ at time
\begin{equation*}
\timeSch_1+L(\timeSch_1,\posSch_1,\{\sigma_1^{(1)},\sigma_1^{(2)}\})=1+\varepsilon+2-\frac{2}{\Theta}-\varepsilon=3-\frac{2}{\Theta}
\end{equation*}
at position $\posSch_2=\frac{1}{\Theta}$. For all $t\ge 3-\frac{2}{\Theta}$, we have
\begin{equation*}
L(t,\posSch_2,\{\sigma_2\})=1-\frac{1}{\Theta}.
\end{equation*}
By assumption, we have $\Theta\ge 3$, which implies that for the finishing time $3-\frac{2}{\Theta}$ of schedule~$S_1$ the inequality
\begin{equation*}
\frac{L\left(3-\frac{2}{\Theta},\frac{1}{\Theta},\{\sigma_2\}\right)}{\Theta-1}=\frac{1-\frac{1}{\Theta}}{\Theta-1}\le 3-\frac{2}{\Theta}.
\end{equation*}
holds. Therefore the final schedule $S_2$ is started at time $\timeSch_2=3-\frac{2}{\Theta}$. To sum it up, we have
\begin{equation*}
\smartstartS=\timeSch_2+L(\timeSch_2,\posSch_2,\{\sigma_2\})=3-\frac{2}{\Theta}+1-\frac{1}{\Theta}=4-\frac{3}{\Theta}.
\end{equation*}
On the other hand, $\opt$ waits at the origin until time $2\varepsilon$. Let $q$ be the position of a request that has occurred by the application of \Cref{lemma: Luring} at the beginning of this proof. Then this requests is released earlier than time $q+\varepsilon$. Since $\opt$ reaches position $q$ not earlier than time~$q+2\varepsilon>q+\varepsilon$, $\opt$ can go straight from the origin to position $1$ collecting and delivering all requests that occur by the application of \Cref{lemma: Luring} as well as, $\smash{\sigma_1^{(1)}}$, $\smash{\sigma_1^{(2)}}$ and~$\sigma_2$. Therefore, we have
\begin{equation*}
\optS=1+2\varepsilon.
\end{equation*}
Note that $\opt$ can do this even if capacity $c=1$ holds since no additional requests need to be carried over $[0,\frac{1}{\Theta}]\cup\{1\}$, where the requests of the application of \Cref{lemma: Luring} appear. To sum it up, we have
\begin{equation}\label{equation: Fraction Smartstart Opt Four}
\frac{\smartstartS}{\optS}=\frac{4-\frac{3}{\Theta}}{1+2\varepsilon}.
\end{equation}
Note that the fraction in equality (\ref{equation: Fraction Smartstart Opt Four}) becomes larger with decreasing $\varepsilon$. By assumption, we have $\varepsilon<\frac{1}{75}$, which implies
\begin{equation*}
\frac{\smartstartS}{\optS}>\frac{4-\frac{3}{\Theta}}{1+\frac{1}{75}}=:g_4(\Theta).
\end{equation*}
The function $g_4$ is strictly monotonically increasing on $[3,\infty)$. Therefore, we have
\begin{equation*}
\frac{\smartstartS}{\optS}>g_4(3)=\frac{225}{76}>2.95>\rho^*\qedhere.
\end{equation*}
\end{proof}
We summarize the Lemmas \ref{lemma: Lower Bound One}, \ref{lemma: Lower Bound Two}, \ref{lemma: Lower Bound Three} and \ref{lemma: Lower Bound Four} into one lemma.
\begin{lemma}\label{lemma: Remaining Lower Bounds}
Let
\begin{equation*}
I_1=(1,2],\quad I_2=(\tfrac{1}{2}(1+\sqrt{13}),1+\sqrt{2}],\quad I_3=(1+\sqrt{2},3),\quad I_4=[3,\infty)
\end{equation*}
be intervals. For every $i\in\{1,2,3,4\}$ there is a set of requests $\sigma$, such that, for all $\Theta\in I_i$,
\begin{equation*}
\frac{\smartstartS}{\optS}>\rho^*\approx 2.93768.
\end{equation*}
\end{lemma}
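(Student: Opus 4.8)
The statement is a pure consolidation of the four preceding lemmas, so the plan is correspondingly short: I would simply line up the parameter ranges and invoke \Cref{lemma: Lower Bound One,lemma: Lower Bound Two,lemma: Lower Bound Three,lemma: Lower Bound Four}. First I would record that the four intervals
\[
I_1=(1,2],\quad I_2=(\tfrac{1}{2}(1+\sqrt{13}),1+\sqrt{2}],\quad I_3=(1+\sqrt{2},3),\quad I_4=[3,\infty)
\]
are, one for one, exactly the hypotheses imposed on $\Theta$ in \Cref{lemma: Lower Bound One}, \Cref{lemma: Lower Bound Two}, \Cref{lemma: Lower Bound Three}, and \Cref{lemma: Lower Bound Four}, respectively. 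Then, fixing $i\in\{1,2,3,4\}$ and any $\Theta\in I_i$, I would apply the $i$-th of these lemmas: it furnishes a request set $\sigma$ (assembled from the luring gadget of \Cref{lemma: Luring} together with one or two extra ``trap'' requests) for which $\smartstartS/\optS>\rho^*$, which is precisely what is claimed. Every inequality that is needed has already been established inside the individual lemmas, so no new estimate is required.

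The one point worth a sanity check — rather than a genuine obstacle — is that the union $I_1\cup I_2\cup I_3\cup I_4$ misses precisely the window $(2,\tfrac{1}{2}(1+\sqrt{13})]\approx(2,2.303]$. This omission is deliberate: on exactly that window the competitive ratio of $\smartstart_\Theta$ is already squeezed between the upper bound $\max\{f_1(\Theta),f_2(\Theta)\}$ of \Cref{proposition: Upper Bound Waiting} and \Cref{proposition: Upper Bound No Waiting} and the matching lower bound of \Cref{proposition: Lower Bound Waiting} and \Cref{proposition: Lower Bound No Waiting}, and the optimal parameter $\Theta^*\approx2.0526$ of \Cref{theorem: General Upper Bound} lies inside it. Hence, combining the present lemma with those propositions yields that $\smartstart_\Theta$ has competitive ratio strictly above $\rho^*$ for every $\Theta\notin(2,\tfrac{1}{2}(1+\sqrt{13})]$ and exactly $\rho^*$ for $\Theta=\Theta^*$, i.e.\ $\Theta^*$ is the best parameter choice.

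In short, the substantive work — in particular the luring construction and the case analyses of the individual lower bound lemmas — has already been carried out; the proof of this statement amounts to no more than citing \Cref{lemma: Lower Bound One,lemma: Lower Bound Two,lemma: Lower Bound Three,lemma: Lower Bound Four} and checking that their hypotheses are $I_1,\dots,I_4$.
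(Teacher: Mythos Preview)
Your proposal is correct and mirrors the paper's own proof, which is literally the single sentence ``This is an immediate consequence of the Lemmas~\ref{lemma: Lower Bound One}, \ref{lemma: Lower Bound Two}, \ref{lemma: Lower Bound Three} and~\ref{lemma: Lower Bound Four}.'' Your additional paragraph explaining why the window $(2,\tfrac{1}{2}(1+\sqrt{13})]$ is deliberately omitted is extra context that the paper defers to the proof of \Cref{theorem: Main Theorem}, but it is accurate and does no harm.
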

\begin{proof}
This is an immediate consequence of the Lemmas \ref{lemma: Lower Bound One}, \ref{lemma: Lower Bound Two}, \ref{lemma: Lower Bound Three} and \ref{lemma: Lower Bound Four}.
\end{proof}

Our main theorem now follows from \Cref{theorem: General Upper Bound} combined with Propositions~\ref{proposition: Lower Bound Waiting} and~\ref{proposition: Lower Bound No Waiting}, as well as \Cref{lemma: Remaining Lower Bounds}.

\begin{theorem}\label{theorem: Main Theorem}
The competitive ratio of $\smartstart_{\Theta^*}$ is exactly
\begin{equation*}
\rho^* = f_1(\Theta^*) = f_2(\Theta^*) \approx 2.93768.
\end{equation*} 
For every other $\Theta>1$ with $\Theta\neq\Theta^*$ the competitive ratio of $\smartstart_\Theta$ is larger than $\rho^*$.
\end{theorem}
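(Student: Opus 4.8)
The plan is to assemble \Cref{theorem: Main Theorem} from results already established in this and the preceding section; no new construction is needed. First I would dispatch the exact value of the competitive ratio of $\smartstart_{\Theta^*}$. The upper bound $\smartstart(\sigma)/\opt(\sigma)\le\rho^*$ is precisely the content of \Cref{theorem: General Upper Bound}. For the matching lower bound, observe that $\Theta^*\approx 2.0526$ lies in the open interval $(2,3)$, so \Cref{proposition: Lower Bound Waiting} yields, for every sufficiently small $\varepsilon>0$, an instance $\sigma$ with $\smartstart(\sigma)/\opt(\sigma)\ge f_1(\Theta^*)-\varepsilon=\rho^*-\varepsilon$. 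Since the competitive ratio is the supremum of this quotient over all instances, letting $\varepsilon\to 0$ shows it is at least $\rho^*$, hence exactly $\rho^*$.

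For the second assertion I would partition the remaining parameter range $(1,\infty)\setminus\{\Theta^*\}$ into the four intervals $I_1=(1,2]$, $I_2=(\tfrac12(1+\sqrt{13}),1+\sqrt2]$, $I_3=(1+\sqrt2,3)$, $I_4=[3,\infty)$ of \Cref{lemma: Remaining Lower Bounds}, together with the ``gap'' interval $J=(2,\tfrac12(1+\sqrt{13})]$. One checks that $I_1\cup J\cup I_2\cup I_3\cup I_4=(1,\infty)$ and that $\Theta^*$ lies only in $J$. For $\Theta\in I_1\cup I_2\cup I_3\cup I_4$, \Cref{lemma: Remaining Lower Bounds} directly provides an instance witnessing $\smartstart(\sigma)/\opt(\sigma)>\rho^*$, so there is nothing further to do. For $\Theta\in J\setminus\{\Theta^*\}$, both \Cref{proposition: Lower Bound Waiting} (valid for $2<\Theta<3$) and \Cref{proposition: Lower Bound No Waiting} (valid for $2\le\Theta\le\tfrac12(1+\sqrt{13})$) apply, and together with the same $\varepsilon\to 0$ argument as above they show the competitive ratio of $\smartstart_{\Theta}$ is at least $\max\{f_1(\Theta),f_2(\Theta)\}$. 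Now $f_1$ is strictly decreasing and $f_2$ strictly increasing on $(1,\infty)$, and they agree only at $\Theta^*$, where their common value is $\rho^*$; hence for $\Theta\in J$ with $\Theta<\Theta^*$ we have $f_1(\Theta)>f_1(\Theta^*)=\rho^*$, while for $\Theta\in J$ with $\Theta>\Theta^*$ we have $f_2(\Theta)>f_2(\Theta^*)=\rho^*$. In either case the competitive ratio strictly exceeds $\rho^*$, completing the argument.

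The proof is essentially bookkeeping: the substantive work lies in the earlier propositions and in \Cref{lemma: Luring}. The only points that require genuine care are verifying that the five intervals above really cover $(1,\infty)$ with $\Theta^*$ isolated in the gap $J$, and noting that a family of instances achieving ratio $\ge c-\varepsilon$ for every small $\varepsilon$ forces the competitive ratio---a supremum over instances---to be $\ge c$. The monotonicity of $f_1$ and $f_2$ invoked in the gap case is the same observation already used in the proof of \Cref{theorem: General Upper Bound}.
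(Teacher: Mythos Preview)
Your proposal is correct and follows essentially the same route as the paper: the upper bound at $\Theta^*$ comes from \Cref{theorem: General Upper Bound}, the matching lower bound from \Cref{proposition: Lower Bound Waiting} (the paper also cites \Cref{proposition: Lower Bound No Waiting}, but either suffices since $f_1(\Theta^*)=f_2(\Theta^*)$), and the remaining parameter range is handled by \Cref{lemma: Remaining Lower Bounds} together with the monotonicity of $f_1,f_2$ and the tightness Propositions on the gap interval $(2,\tfrac12(1+\sqrt{13})]$. Your explicit $\varepsilon\to 0$ supremum argument and the verification that the intervals cover $(1,\infty)$ are the only additions, and both are routine.
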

\begin{proof}
We have shown in \Cref{proposition: Lower Bound Waiting} that the upper bound 
\begin{equation*}
\frac{\smartstartS}{\optS}\le f_1(\Theta) = \frac{2\Theta^2+2\Theta}{\Theta^2+\Theta-2}
\end{equation*}
established in \Cref{proposition: Upper Bound Waiting} for the case, where $\smartstart$ waits before starting the final schedule, is tight for all $\Theta\in(2,3)$. Furthermore, we have shown in \Cref{proposition: Lower Bound No Waiting} that the upper bound
\begin{equation*}
\frac{\smartstartS}{\optS}\le f_2(\Theta) = \left(\Theta+1-\frac{\Theta-1}{3\Theta+3}\right)
\end{equation*}
established in \Cref{proposition: Upper Bound No Waiting} for the case, where $\smartstart$ does not wait before starting the final schedule, is tight for all $\Theta\in(2,\frac{1}{2}(1+\sqrt{13})]$. Since $\Theta^*\approx 2.0526$ lies in those ranges, the competitive ratio of $\smartstart_{\Theta^*}$ is indeed exactly~$\rho^*$. 

It remains to show that for every~$\Theta>1$ with~$\Theta\neq\Theta^*$ the competitive ratio is larger. First, according to \Cref{lemma: Remaining Lower Bounds}, the competitive ratio of $\smartstart$ with parameter $\Theta\in (1,2]$ or $\Theta\in (\frac{1}{2}(1+\sqrt{13}),\infty)$ is larger than $\rho^*$. By monotonicity of~$f_1$, every function value in $(2,\Theta^*)$ is larger than $f_1(\Theta^*)=\rho^*$. Thus, the competitive ratio of $\smartstart$ with parameter $\Theta\in (2,\Theta^*)$ is larger than $\rho^*$, since $f_1$ is tight on $(2,\Theta^*)$ by \Cref{proposition: Lower Bound Waiting}. Similarly, by monotonicity of~$f_2$, every function value in $(\Theta^*,\frac{1}{2}(1+\sqrt{13})]$ is larger than~$f_2(\Theta^*)=\rho^*$. Thus, the competitive ratio of $\smartstart$ with parameter~$\Theta\in (\Theta^*,\frac{1}{2}(1+\sqrt{13})]$ is larger than~$\rho^*$, since $f_1$ is tight on $(\Theta^*,\frac{1}{2}(1+\sqrt{13})]$ by \Cref{proposition: Lower Bound No Waiting}.
\end{proof}

\section{Lower Bound for the Closed Version}\label{section: Lower Bound for the Closed Version}

We provide a lower bound for \smartstart for closed online \dar on the line that matches the upper bound given in~\cite{Ascheuer1} for arbitrary metric spaces.
Note that in this setting, by definition, every schedule of \smartstart is a closed walk that returns to the origin.

\begin{theorem}\label{theorem: Competitive Ratio for Closed}
The competitive ratio of $\smartstart$ for closed online \dar on the line with $\Theta=2$ is exactly $2$. For every other $\Theta>1$ with $\Theta\neq 2$ the competitive ratio of $\smartstart_\Theta$ is larger than $2$.
\end{theorem}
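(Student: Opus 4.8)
The plan is to pair the known upper bound of~$2$ for closed online \dar with parameter $\Theta=2$ on general metric spaces (due to~\cite{Ascheuer1}, hence valid on the line) with two explicit lower bound families. Throughout I will use that in the closed setting every \smartstart schedule is a closed walk, so $\posSch_j=0$ for all $j$ and $\smartstartS=\timeSch_N+L(\timeSch_N,0,\sigma_{S_N})$. The first family handles $1<\Theta\le 2$ and yields ratio exactly $\frac{\Theta}{\Theta-1}$, which equals~$2$ for $\Theta=2$ and exceeds~$2$ for $\Theta<2$; the second handles $\Theta>2$ and yields ratio approaching $\frac{4\Theta-2}{\Theta+1}>2$.

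For $1<\Theta\le 2$ I would release a single request $(1,1;\mu)$ at position~$1$ with $\mu>0$ small (say $\mu<1$). Since $\frac{L(\mu,0,\{(1,1;\mu)\})}{\Theta-1}=\frac{2}{\Theta-1}\ge 2>\mu$, \smartstart waits and starts its only schedule at time $\timeSch_1=\frac{2}{\Theta-1}$, performing a round trip to~$1$ of length~$2$, so $\smartstartS=\frac{2}{\Theta-1}+2=\frac{2\Theta}{\Theta-1}$. An offline server walks straight to~$1$ and back, giving $\optS=2$ and ratio $\frac{\Theta}{\Theta-1}$. Combined with the upper bound of~\cite{Ascheuer1}, this already proves the claim for all $\Theta\le 2$ (exactly~$2$ at $\Theta=2$, strictly larger below).

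For $\Theta>2$ the single request only gives ratio $\frac{\Theta}{\Theta-1}<2$, so I would instead force \smartstart into two separate round trips while letting OPT merge the requests into one. Fix small $\mu\in(0,\frac{2}{\Theta-1})$ and small $\delta\in(0,2)$, and release $\sigma_1=(1,1;\mu)$; as before \smartstart waits and starts $S_1$ (serving only $\sigma_1$) at time $\frac{2}{\Theta-1}$, finishing at time $\frac{2}{\Theta-1}+2=\frac{2\Theta}{\Theta-1}$. Then release $\sigma_2=(1,1;\frac{2}{\Theta-1}+\delta)$, which appears after $S_1$ has started but before it ends, so it is ignored during $S_1$ and served by a second round trip $S_2$ that \smartstart starts without delay at time $\frac{2\Theta}{\Theta-1}$ (indeed $\frac{2\Theta}{\Theta-1}\ge\frac{2}{\Theta-1}$). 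Hence $\smartstartS=\frac{2\Theta}{\Theta-1}+2=\frac{4\Theta-2}{\Theta-1}$. OPT can instead make a single round trip, paced so as to reach position~$1$ exactly at time $\frac{2}{\Theta-1}+\delta$, collecting both requests; this gives $\optS=\frac{2}{\Theta-1}+\delta+1=\frac{\Theta+1}{\Theta-1}+\delta$. As $\delta\to 0$ the ratio tends to $\frac{4\Theta-2}{\Theta+1}$, which is strictly above~$2$ precisely for $\Theta>2$; choosing $\delta<\frac{\Theta-2}{\Theta-1}$ makes the ratio exceed~$2$ on the concrete instance, and combining everything yields the theorem.

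The only nontrivial point — and the step I expect to need the most care — is the timing in the $\Theta>2$ construction: $\sigma_2$ must be released strictly after \smartstart commits to $S_1$ so that it triggers a full additional round trip rather than being absorbed, yet early enough that OPT's single merged trip barely pays for it. This trade-off is exactly balanced by releasing $\sigma_2$ at time $\frac{2}{\Theta-1}+\delta$ with $\delta\to 0$, and it is what locates the crossover precisely at $\Theta=2$. Everything else is routine verification of \smartstart's waiting rule (the values of $\timeSch_1$ and the no-delay start of $S_2$) and an elementary optimality argument for OPT, both straightforward once the schedule structure $\posSch_j\equiv 0$ is noted.
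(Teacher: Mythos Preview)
Your overall strategy matches the paper's: cite the general upper bound of~\cite{Ascheuer1} for $\Theta=2$, use a single request to get ratio $\frac{\Theta}{\Theta-1}$ for $\Theta\le 2$, and force two separate round trips for $\Theta>2$. Your construction for $\Theta>2$ is in fact cleaner than the paper's, which splits $2<\Theta\le 3$ and $\Theta>3$ into two different instances; placing both requests at the same point~$1$ avoids that case distinction.

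There is, however, a small but genuine slip in your \opt computation. You say \opt can be ``paced so as to reach position~$1$ exactly at time $\frac{2}{\Theta-1}+\delta$''. That is only possible when $\frac{2}{\Theta-1}+\delta\ge 1$, i.e.\ essentially when $\Theta\le 3$. For $\Theta>3$ (and $\delta$ small) the server cannot arrive at position~$1$ before time~$1$, so your stated value $\optS=\frac{\Theta+1}{\Theta-1}+\delta$ is too small; the correct value is $\optS=\max\bigl\{1,\tfrac{2}{\Theta-1}+\delta\bigr\}+1=2$. Fortunately this does not break the argument: with $\optS=2$ the ratio is $\frac{2\Theta-1}{\Theta-1}=2+\frac{1}{\Theta-1}>2$, so your instance still works for all $\Theta>2$. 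Just replace the pacing sentence by the observation that \opt makes one round trip to~$1$, possibly waiting there until $\sigma_2$ is released, giving $\optS=\max\{1,\tfrac{2}{\Theta-1}+\delta\}+1$; then check that $\smartstartS>2\cdot\optS$ in both subcases. With that correction your proof is complete and arguably tidier than the paper's.
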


\begin{proof}
We show that the competitive ratio of $\smartstart_2$ is at least~$2$ and that the competitive ratio of $\smartstart_\Theta$ is larger than $2$ for all $\Theta\neq 2$.
From the fact that \smartstart is 2-competitive even for general metric spaces~\cite[Thm.~6]{Ascheuer1}, it follows that $\smartstart_2$ has competitive ratio exactly 2 on the line.

Let $\Theta\le 2$ and consider the set of requests $\{\sigma_1\}$ with $\sigma_1=(0.5,0.5;0)$. 
Obviously, \opt can serve this request and return to the origin in time $\opt(\{\sigma_1\})=1$. 
Thus, for all $t\ge 0$, we have $L(t,0,\{\sigma_1\})=1$. 
On the other hand, \smartstart waits until time
\begin{equation*}
\timeSch_1=\frac{L(\timeSch_1,0,\{\sigma_1\})}{\Theta-1}=\frac{1}{\Theta-1}
\end{equation*}
to start its only schedule and finishes at time $\frac{\Theta}{\Theta-1}$. 
To sum it up, we have
\begin{equation*}
\frac{\smartstart(\{\sigma_1\})}{\opt(\{\sigma_1\})}=\frac{\Theta}{\Theta-1}
\end{equation*}
with~$\frac{\Theta}{\Theta-1}>2$ for all~$\Theta<2$ and~$\frac{\Theta}{\Theta-1}=2$ for~$\Theta=2$. Now let $2<\Theta\le 3$ and $\varepsilon\in (0,\min\{1-\frac{1}{\Theta-1},\frac{\Theta-2}{2(\Theta-1)}\})$, and consider the set of requests $\{\sigma_1,\sigma_2\}$ with
\begin{align*}
\sigma_1&=(0.5,0.5;0)\quad\text{and}\\
\sigma_2&=\left(1-\frac{1}{\Theta-1}-\varepsilon,1-\frac{1}{\Theta-1}-\varepsilon;\frac{1}{\Theta-1}+\varepsilon\right).
\end{align*}
By assumption, we have $\Theta>2$ and $\varepsilon <1-\frac{1}{\Theta-1}$, which implies
\begin{align*}
0 &\myoverset{$\varepsilon <1-\frac{1}{\Theta-1}$}{<}{55}1-\frac{1}{\Theta-1}-\varepsilon\\
&\myoverset{$\Theta\le 3$}{<}{55} 0.5,
\end{align*}
i.e., the position of request $\sigma_2$ lies between $0$ and $0.5$. 
If \opt moves to position~$0.5$ and then returns to the origin, it is at position
\begin{equation*}
a_2=0.5-\left|\underbrace{\left(\frac{1}{\Theta-1}+\varepsilon\right)}_{>0.5}-0.5\right|=1-\frac{1}{\Theta-1}-\varepsilon
\end{equation*}
at time $r_2=\frac{1}{\Theta-1}+\varepsilon$. 
Thus, \opt can serve $\sigma_2$ on the way and we have $\opt(\{\sigma_1,\sigma_2\})=1$. 
For all $t\ge 0$, we have $L(t,0,\{\sigma_1\})=1$. 
Therefore, \smartstart waits until time
\begin{equation*}
\timeSch_1=\frac{L(\timeSch_1,0,\{\sigma_1\})}{\Theta-1}=\frac{1}{\Theta-1}.
\end{equation*}
before starting its first schedule. Since we have $\frac{1}{\Theta-1}<\frac{1}{\Theta-1}+\varepsilon$, \smartstart starts to serve $\sigma_1$ at time~$\timeSch_1$ and returns to the origin at time $\frac{\Theta}{\Theta-1}$. 
For all $t\ge 0$, we have
\begin{equation*}
L(t,0,\{\sigma_2\})=2-\frac{2}{\Theta-1}-2\varepsilon,
\end{equation*}
thus \smartstart does not start the second and final schedule before time $\frac{2-\frac{2}{\Theta-1}-2\varepsilon}{\Theta-1}$. 
By assumption, we have $\Theta>2$, which implies~$\smash{\frac{\Theta}{\Theta-1}>\frac{2-\frac{2}{\Theta-1}-2\varepsilon}{\Theta-1}}$.
Thus, the second schedule is started at time~$\smash{\timeSch_2=\frac{\Theta}{\Theta-1}}$ and finished at time
\begin{equation*}
\smartstart(\{\sigma_1,\sigma_2\})=\frac{\Theta}{\Theta-1}+2-\frac{2}{\Theta-1}-2\varepsilon.
\end{equation*}
To sum it up, we have
\begin{align*}
\frac{\smartstart(\{\sigma_1,\sigma_2\})}{\opt(\{\sigma_1,\sigma_2\})}&\myoverset{}{=}{40}\frac{\Theta}{\Theta-1}+2-\frac{2}{\Theta-1}-2\varepsilon\\
&\myoverset{$\varepsilon <\frac{\Theta-2}{2(\Theta-1)}$}{>}{40}\frac{3\Theta-4}{\Theta-1}-2\frac{\Theta-2}{2(\Theta-1)}\\
&\myoverset{}{=}{40}2.
\end{align*}
Now let $\Theta> 3$ and $\varepsilon\in (0,0.5-\frac{1}{\Theta-1})$, and consider the set of requests $\{\sigma_1,\sigma_2\}$ with
\begin{align*}
\sigma_1&=(0.5,0.5;0)\quad\text{and}\\
\sigma_2&=\left(0.5,0.5;\frac{1}{\Theta-1}+\varepsilon\right).
\end{align*}
By assumption, we have $\varepsilon <0.5-\frac{1}{\Theta-1}$, which implies
\begin{equation*}
\frac{1}{\Theta-1}+\varepsilon<0.5,
\end{equation*}
i.e., $\sigma_2$ is released before position $0.5$ is reachable. 
If \opt moves to position~$0.5$ and then returns to the origin, it can serve both requests without additional waiting time and we have~$\opt(\{\sigma_1,\sigma_2\})=1$. 
For all $t\ge 0$, we have $L(t,0,\{\sigma_1\})=1$. 
Therefore, \smartstart waits until time
\begin{equation*}
\timeSch_1=\frac{L(\timeSch_1,0,\{\sigma_1\})}{\Theta-1}=\frac{1}{\Theta-1}.
\end{equation*}
before starting its first schedule. Since we have $\frac{1}{\Theta-1}<\frac{1}{\Theta-1}+\varepsilon$, \smartstart starts to serve $\sigma_1$ at time~$\timeSch_1$ and returns to the origin at time $\frac{\Theta}{\Theta-1}$. 
For all $t\ge 0$, we have
\begin{equation*}
L(t,0,\{\sigma_2\})=1,
\end{equation*}
thus \smartstart does not start the second and final schedule before time $\smash{\frac{1}{\Theta-1}}$. 
By assumption, we have $\Theta>3$, which implies $\smash{\frac{\Theta}{\Theta-1}>\frac{1}{\Theta-1}}$. Thus, the second schedule is started at time~$\timeSch_2=\frac{\Theta}{\Theta-1}$ and finished at time
\begin{equation*}
\smartstart(\{\sigma_1,\sigma_2\})=\frac{\Theta}{\Theta-1}+1.
\end{equation*}
To sum it up, we have
\begin{equation*}
\frac{\smartstart(\{\sigma_1,\sigma_2\})}{\opt(\{\sigma_1,\sigma_2\})}=\frac{\Theta}{\Theta-1}+1>2.
\end{equation*}
\end{proof}
\newpage
\bibliography{full}
\addcontentsline{toc}{chapter}{Bibliography}

\newpage
\appendix

\section{Algorithm {\normalfont\ignore}}\label{appendix: Algorithm Ignore}

\setcounter{theorem}{0}
\setcounter{section}{1}
\renewcommand{\thetheorem}{\Alph{section}.\arabic{theorem}}

The algorithm $\ignore$ was described in \cite{Ascheuer1} (though the authors do not claim originality for the algorithm) for the closed case of the online \dar problem in arbitrary metric spaces. 
We describe the algorithm for the open case as it was introduced in \cite{Krumke1} (see \Cref{algorithm: Ignore}): 
The server remains idle until the point in time~$t$ when the first request appears. 
It then serves the requests released at time~$t$ immediately by following a shortest schedule~$S$. 
All requests that appear during the time when the algorithm follows $S$ are temporarily ignored. After $S$ has been completed the server is at the destination of the last served request~$p$, computes a shortest schedule for the unserved requests starting in position~$p$ and follows this schedule. Again all new requests appearing during the time that the server is following the schedule, are temporarily ignored. The algorithm keeps on following schedules and temporarily ignoring requests this way.

\begin{algorithm}
\SetKwBlock{Repeat}{repeat}{}
\DontPrintSemicolon
\Repeat{
 \eIf{$R_t\neq\emptyset$}{
 Start optimal offline schedule serving $R_t$ starting from the current position\;
   }
   {wait\;}}
 \caption{$\ignore$}\label{algorithm: Ignore}
\end{algorithm}

It was shown in \cite{Krumke1} that $4$ is an upper bound for the competitive ratio of $\ignore$. 
We show that this is tight on the line.

\begin{proposition} 
The competitive ratio of $\ignore$ is $\rho_\ignore=4$.
\end{proposition}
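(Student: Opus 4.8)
The plan is to prove the two inequalities separately. The bound $\rho_\ignore \le 4$ is already known from \cite{Krumke1} (as recalled above), so the real work is to produce matching lower-bound instances, i.e.\ a family $\sigma$ (depending on a small parameter $\varepsilon>0$) with $\ignore(\sigma)/\opt(\sigma) \to 4$.

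The starting point is structural. Since $\ignore$ ignores every request that arrives while it is executing a schedule, its whole run is pinned down by the set of requests present at each of its decision points, and $\ignore(\sigma)=r_1+\sum_{j=1}^{N}L(\timeSch_j,\posSch_j,\sigma_{S_j})$ plus whatever idle time $\ignore$ spends between schedules. Hence the adversary only needs to arrange that (i) each new request arrives \emph{just after} $\ignore$ has committed to its current schedule, so that $\ignore$ is forced to treat it in a fresh schedule instead of folding it into the current one, and (ii) the new request sits on the far side of $\ignore$'s current position, so that the next schedule is as long as possible relative to what $\opt$ must do. Condition~(i) is exactly what forces the $\varepsilon$-perturbations of the release times.

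Concretely, I would first drive $\ignore$ away from the origin by a ``luring'' step — the analogue of \Cref{lemma: Luring}, but elementary for $\ignore$, since a single request placed at the target position already makes $\ignore$ travel there immediately and serve it in one short schedule. From that position I would release, one at a time and always on the side opposite to $\ignore$, a short sequence of requests that makes $\ignore$ traverse the relevant interval several times over, while $\opt$ — knowing all requests in advance — serves everything along a single near-optimal trajectory by choosing the order of visits and exploiting that in the open variant it never has to return to the origin. One then evaluates $\ignore(\sigma)$ and $\opt(\sigma)$ in closed form, lets $\varepsilon\to 0$, and reads off the ratio $4$; checking that $\ignore$'s optimal schedule at each step is indeed the ``bad'' one (rather than a cheaper reordering) is a finite case distinction of exactly the kind carried out in the proofs of \Cref{proposition: Lower Bound Waiting} and \Cref{proposition: Lower Bound No Waiting}.

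The hard part is the construction itself, because conditions~(i) and~(ii) pull against keeping $\opt(\sigma)$ small: delaying a request so that $\ignore$ must open a new schedule for it typically also forces $\opt$ to wait for that request, and pushing requests far out to lengthen $\ignore$'s schedules also lengthens $\opt$'s trajectory. Getting ``$\ignore$ does not batch'', ``$\ignore$ must backtrack'', and ``$\opt$ is cheap'' to hold simultaneously all the way up to the tight constant $4$ is the crux of the proof; once the instance family is fixed, the two cost computations and the limit are routine.
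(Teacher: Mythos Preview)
Your write-up is a plan, not a proof: you correctly identify that the upper bound is quoted from \cite{Krumke1} and that the task is a matching lower-bound construction, but you never actually give the instance. You yourself flag ``the hard part is the construction itself'' and then stop short of doing it. As it stands there is nothing to check.

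More importantly, the strategy you sketch---lure $\ignore$ to one side, then release single visiting requests alternately on opposite sides---does not obviously reach the ratio~$4$. The release-time constraint cuts both ways: if a request must appear after $\ignore$ has committed to its current schedule (so that $\ignore$ cannot batch it), then $\opt$ cannot serve it before that time either, and the late requests end up inflating $\opt(\sigma)$ roughly in step with $\ignore(\sigma)$. A straightforward ping-pong of this kind tends to saturate around ratio~$3$, not~$4$.

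The paper's construction avoids this by using a genuine \emph{transportation} request rather than only visiting requests. After $\ignore$ has been sent to position $\approx 1$, two requests are released just after it departs: a visit at the origin and a transportation request from $\tfrac12$ to $\approx 1$. The transportation request forces $\ignore$'s second schedule to first go right (pick up at $\tfrac12$, deliver near $1$) and only then go left to the origin, so that this single schedule costs $\approx 2$ and ends at $0$; a final visiting request near $1$ then costs $\ignore$ another $\approx 1$. Meanwhile $\opt$ serves everything in a single left-to-right sweep of length $\approx 1$, because the transportation request lies on its way and the late request is timed to be reachable exactly when $\opt$ arrives. The extra $\approx 1$ that the transportation request adds to $\ignore$'s cost is entirely free for $\opt$, and that is precisely the missing unit that lifts the ratio from $3$ to $4$. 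Your outline contains no analogue of this device.
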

\begin{proof}
It was shown in \cite[Theorem 2.29]{Krumke1} that $4$ is a upper bound for the competitive ratio of $\ignore$ for arbitrary metric spaces and therefore in particular for the real line. It remains to show that for every $\varepsilon>0$ there is a set of requests $\sigma$ such that
\begin{equation*}
\frac{\ignoreS}{\optS} \geq 4-\varepsilon.
\end{equation*}
Let $\varepsilon>0$. 
We consider the set of requests $\sigma$ consisting of
\begin{align*}
\sigma_1&=\left(1-\frac{1}{5}\varepsilon,1-\frac{1}{5}\varepsilon;0\right),\\
\sigma_2^{(1)}&=\left(\frac{1}{2},1-\frac{1}{5}\varepsilon;\frac{1}{5}\varepsilon\right),\\
\sigma_2^{(2)}&=\left(0,0;\frac{1}{5}\varepsilon\right),\\
\sigma_3&=\left(1-\frac{1}{5}\varepsilon,1-\frac{1}{5}\varepsilon;1\right).
\end{align*}
$\ignore$ first serves request $\sigma_1$ in time $1-\frac{1}{5}\varepsilon$. 
Then, it serves the requests~$\sigma_2^{(1)}$ and~$\sigma_2^{(2)}$. 
Note that serving $\sigma_2^{(1)}$ before collecting $\sigma_2^{(2)}$ takes time $2-\frac{3}{5}\varepsilon$, while collecting~$\sigma_2^{(2)}$ first takes time $2-\frac{2}{5}\varepsilon$. 
Therefore $\ignore$ serves $\sigma_2^{(1)}$ first and the second schedule ends in the origin. 
The third and final schedule again needs time~$1-\frac{1}{5}$ to serve $\sigma_3$. 
To sum it up, we have
\begin{equation*}
\ignoreS=4-\varepsilon.
\end{equation*}
$\opt$ on the other hand waits until time~$\frac{1}{5}\varepsilon$ at the origin for the request $\sigma_2^{(2)}$ and then just collects and delivers the remaining requests on its way to $p=1-\frac{1}{5}\varepsilon$, resulting in
\begin{equation*}
\optS=1.
\end{equation*}
Therefore, we have
\begin{equation*}
\rho_{\text{$\ignore$}}=\frac{\ignoreS}{\optS}=4-\varepsilon,
\end{equation*}
as desired.
\end{proof}
\end{document}